\documentclass{article}

\usepackage{microtype}
\usepackage{graphicx}
\usepackage{subfigure}
\usepackage{diagbox}
\usepackage{url}
\usepackage{booktabs} 
\usepackage{natbib}
\usepackage{hyperref}

\usepackage[a4paper,margin=3cm]{geometry}

\usepackage{amsmath}
\usepackage{amssymb}
\usepackage{mathtools}
\usepackage{amsthm}
\usepackage{circuitikz}

\def\R{{\mathbb{R}}}

\def\N{{\mathbb{N}}}

\def\sphere{\mathbb{S}}
\def\1{\mathbb{1}}

\def\xcont{(x_t)_{t\ge 0}}
\def\xz{(x_t,z_t)_{t\ge 0}}
\def\xdisc{\{\tilde x_k \}_{k\ge 0}}

\def\mart{(M_t)_{t\ge 0}}
\def\xdisctilde{\{\tilde{x}_k \}_{k\in \N}}
\def\pl{{\text{PL}^\mu}}
\def\rsi{{\text{RSI}^\nu}}

\newcommand{\bpar}[1]{\left(#1\right)}

\newcommand{\E}[1]{\mathbb{E}\left[#1 \right]}
\newcommand{\condE}[1]{\mathbb{E}_{k}\left[#1 \right]}
\newcommand{\norm}[1]{\left\lVert#1\right\rVert}
\newcommand{\dotprod}[1]{\left< #1\right>}
\newcommand{\grad}[1]{\nabla f(#1)}

\newcommand{\sqc}{\text{SQC}_{\tau}^{\mu}}

\newcommand{\ls}{\text{LS}^{L}}

\newcommand{\strongconv}{\text{SC}^{\mu}}

\newcommand{\ac}{\text{AC}^{a}}
\newcommand{\acf}[1]{\text{AC}^{#1}}
\newcommand{\qg}{\text{QG}_{+}^{L_0}}
\newcommand{\qginf}{\text{QG}_{-}^{\mu_0}}
\newcommand{\qgf}[1]{\text{QG}_{+}^{#1}}
\newcommand{\qginff}[1]{\text{QG}_{-}^{#1}}
\newcommand{\sqcf}[2]{\text{SQC}_{#1}^{#2}}
\newcommand{\plf}[1]{\text{PL}^{#1}}
\newcommand{\rsif}[1]{\text{RSI}^{#1}}

\newcommand{\off}[1]{}

\newcommand{\bigO}{\mathcal{O}}

\def\argmin{\textup{argmin}\,}
\def\arginf{\textup{arginf}\,}

\usepackage[capitalize,noabbrev]{cleveref}

\theoremstyle{plain}
\newtheorem{theorem}{Theorem}[section]
\newtheorem{proposition}[theorem]{Proposition}
\newtheorem{lemma}[theorem]{Lemma}

\theoremstyle{definition}
\newtheorem{definition}[theorem]{Definition}
\newtheorem{assumption}[theorem]{Assumption}
\theoremstyle{remark}

\newtheorem*{remark*}{Remark}

\usepackage[textsize=tiny]{todonotes}
\usepackage{enumitem}

\author{Julien Hermant\\
Univ. Bordeaux, IMB}
\date{}

\title{Acceleration for Polyak–\L{}ojasiewicz Functions with a Gradient Aiming Condition}
\author{Julien Hermant\\
Univ. Bordeaux, IMB}
\date{}

\begin{document}





\maketitle

\vskip 0.3in




\begin{abstract}
It is known that when minimizing smooth Polyak-\L{}ojasiewicz (PL) functions, Nesterov's momentum algorithm cannot significantly improve the convergence bound of gradient descent, contrasting with the acceleration phenomenon occurring in the strongly convex case. To bridge this gap, the literature has proposed \textit{strongly quasar-convex} functions as an intermediate non-convex class, for which the accelerated bounds of the strongly convex case has been suggested to persist. We show that strong quasar-convexity actually does not ensure a similar acceleration phenomenon. As an alternative, we study PL functions under an \textit{aiming condition} that measures how well the descent direction points toward a minimizer. This viewpoint sheds new light on the possibility of theoretical acceleration with Nesterov's momentum when minimizing PL functions.
\end{abstract}

\section{Introduction}

Momentum \cite{polyak1964some,nesterov1983method} is an important mechanism of several modern optimizers. In machine learning, it is observed to substantially improve the optimization of large-scale models in practice \cite{sutskever2013importance,he2016deep}.
Although the theoretical benefit of incorporating momentum in vanilla gradient descent is well established in the convex and strongly convex setting \cite{nemirovskij1983problem,nesterovbook}, non-convexity is, however, an inherent property of many problems \cite{dauphin2014identifying,ge2016matrix,ge2017no,bhojanapalli2016global,li2018visualizing}. Importantly, for some non-convex settings it is not possible to demonstrate acceleration using momentum \cite{lowerboundI,PLlowerbound}. 
Focusing on neural networks, some works establish acceleration properties \cite{wang2021modular, liu2022provable,liu2022convergence, liao2024provable}, but they rely on simplifying assumptions--such as a very high degree of over-parameterization--that makes the learning problem similar to a strongly convex quadratic minimisation problem.

Another line of work lies in considering classes of non-convex functions that relax strong convexity.
In particular, the class of Polyak-\L{}ojasiewicz (PL) functions \cite{Polyak1963,lojasiewicz1963topological} has emerged as an important framework and has stimulated extensive interest \cite{karimi2016linear,fazel2018global,oymak2019overparameterized,altschuler2021averaging,merigot2021non,apidopoulos2022convergence,apidopoulos2025heavy,gess2023exponential}. A continuously differentiable function $f:\R^d \to \R $ belongs to this class if there exists $\mu>0$ such that the following inequality holds for all $x\in\R^d$
\[ \norm{\nabla f(x)}^2 \ge 2\mu\Big(f(x)-\min_{u\in \R^d}f(u)\Big).\]
Among the remarkable properties of PL functions, we note that (i) it is a necessary and sufficient condition to achieve linear convergence using gradient descent \cite{abbaszadehpeivasti2023conditions}, and (ii) it can effectively model over-parameterized neural networks \cite{ liu2022loss,barboni2022global,chen2023overparameterized, xulocal}. Yet, PL functions are such that gradient descent's convergence bound cannot be significantly improved using momentum \cite{PLlowerbound}. This seems to contradict the often empirically observed benefit of momentum, and thus motivates the search for more structured relaxations of strong convexity.
An intermediate non-convex class that is expected to capture acceleration by momentum is \textit{strong quasar-convexity} \cite{hinder2020near}, sometimes referred to as \textit{weak-quasi-strong convexity }\cite{necoara2019linear,bu2020note}. A continuously differentiable function $f:\R^d \to \R $ belongs to this class if there exists a minimizer $x^\ast\in \R^d$ and parameters $(\tau,\mu) \in (0,1]\times \R_+^{\ast}$ such that for all $x\in \R^d$
\[\min_{u\in \R^d}f(u) \ge f(x) + \frac{1}{\tau}\dotprod{\nabla f(x),x^\ast-x} + \frac{\mu}{2}\norm{x-x^\ast}^2.\]
There has been growing interest in strongly quasar-convex functions in recent years \cite{gower2021sgd,wang2023continuized,alimisis2024characterization,alimisis2024geodesic,pun2024online,hermant2025continuized,chenefficient2025,lara2025delayed,de2025extending,farzin2025minimisation}.
In the $L$-smooth setting, minimizing these functions using momentum is often referred to as \textit{accelerated optimization} \cite{hinder2020near,fu2023accelerated,hermant2024study}, suggesting better convergence bounds than gradient descent. This expectation partly stems from the fact that its complexity bounds take a form similar to those obtained in the smooth, strongly convex case. In this work, we show that this intuition is misleading.


\subsection{Contributions}
Our work addresses what additional structure beyond the PL condition is required to obtain improved theoretical convergence bounds over gradient descent when using momentum. We summarize our contributions as follows:

\noindent \textbf{(1.)} 
We exhibit in Section~\ref{sec:sqc_pitfalls} flaws of strong quasar-convexity, and show that in contrast with existing interpretations, minimizing these functions with momentum does not necessarily yield better theoretical convergence bounds than gradient descent. We demonstrate that an acceleration phenomenon similar to strong convexity does not occur in general for this class.

\noindent \textbf{(2.)}  We propose in Section~\ref{sec:aiming_cond} a different viewpoint by considering an \textit{aiming condition}
\begin{equation}\label{eq:intro_ac}
    \dotprod{\nabla f(x),x-x^\ast} \ge a \norm{\nabla f(x)}\norm{x-x^\ast},
\end{equation}
for $1 \ge a>0$, which quantifies how well the descent direction aims toward a minimizer $x^\ast$. We show how \eqref{eq:intro_ac} relates to strong-quasar convexity, and show that it helps to define a specific reparameterization of this latter class. In Section~\ref{sec:cv_results}, we study the convergence of gradient descent, gradient flow, the Nesterov momentum algorithm and the Nesterov ODE for functions that are both PL and satisfy the aiming condition. Using a stochastic parameterization of Nesterov momentum's algorithm, we show that we obtain accelerated bounds when the alignment constant $a$ is sufficiently large (\cref{sec:mom_cv}). 


\noindent \textbf{(3.)} 
We relax the aiming condition by requiring it to hold only on average along the optimization trajectory, recovering similar convergence rates (Section~\ref{sec:avg}). 
 It shows that acceleration can be achieved as long as \eqref{eq:intro_ac} holds over a sufficiently large fraction of the optimization path,
 better reflecting the behavior of practical optimization algorithms.
%

\noindent \textbf{(4.)} We provide numerical experiment to support our theoretical insights. We exhibit a two-dimensional PL function with a unique minimizer that fails to
satisfy~\eqref{eq:intro_ac}, resulting in gradient descent to outperform momentum methods during the early stages of optimization (Section~\ref{sec:numerical_1}). We then provide an heuristic validation of our theoretical convergence bounds on the hard function for PL functions from \cite{PLlowerbound}, see Section~\ref{app:heuristic}.

\subsection{Related Works on Aiming Conditions}\label{sec:related_aiming}
Studying SGD,
\cite{liu2023aiming,guptanesterov,alimisis2025we} use conditions close to quasar-convexity they name \textit{aiming condition}. While they consider the interpretation of pointing toward a minimizer, this condition actually carries more structure. Studying gradient descent, \cite{guille2024no,singh2025directionality} consider \eqref{eq:intro_ac} from distinct perspectives: the first studies it empirically along the optimization path, the second uses it to prove a stability result, in the idea of better understanding the Edge of Stability phenomenon \cite{cohengradient}.  \cite{hermant2024study} establishes that a PL function with a unique minimizer is also strongly quasar-convex for some parameters, as long as \eqref{eq:intro_ac} is verified, but the derived parameters are such that momentum does not yield accelerated bounds, see Appendix~\ref{app:related:uaac}.
\section{Background}
In this work, $f:\R^d \to \R$ is continuously differentiable. We note $f^\ast := \min_{x \in \R^d} f(x)$, $x^\ast \in {\arg\min}_{x\in \R^d} f(x)$. 
$k \in \N$ denotes a discrete index and $t \in \R_+$ a continuous one. $\mathcal{E}(1)$ denotes the exponential law of parameter $1$. 

\subsection{Algorithms and Equations}
We introduce the algorithms and dynamical systems that we study throughout this work.

\paragraph{Gradient Descent and Gradient Flow \:}
Among the most classical first-order algorithms is gradient descent \eqref{eq:gd}
\begin{equation}\label{eq:gd}\tag{GD}
    \tilde x_{k+1} =\tilde x_k - \gamma \nabla f(\tilde x_k).
\end{equation}
It is the building block of many important algorithms used in modern applications \cite{hinton2012rmsprop,ghadimi2013stochastic,kingma2014adam,romano2017little}, and is still an important object of study \cite{malitsky2020adaptive,cohengradient}. 
 Writing gradient descent as $ \frac{x_{k+1} - x_k}{\gamma } = -\nabla f(x_k)$, setting the \textit{Ansatz} $\tilde x_k \approx x_{k\gamma }$ for some smooth curve $\xcont$, we consider the ODE limit of \eqref{eq:gd} as the stepsize $\gamma $ vanishes to zero, named gradient flow \eqref{eq:gf} 

\begin{equation}\label{eq:gf}\tag{GF}
    \dot{x}_t = - \nabla f(x_t). 
\end{equation}
With words, \eqref{eq:gf} is a continuous version of \eqref{eq:gd}. This dynamical system viewpoint is longstanding for gaining insights into the corresponding algorithms \cite{ambrosio2005gradient, benaim2006dynamics}. 

\paragraph{Nesterov's Momentum: Ode and Algorithm \:}
If \eqref{eq:gd} benefits from a rather simple formulation, there are many situations where modifications of this algorithm accelerate its convergence speed. Among these mechanisms, an important one is \textit{momentum}. The first momentum algorithm was Polyak's Heavy Ball \cite{polyak1964some}, which writes
\begin{equation}\label{eq:hb}\tag{HB}
     \tilde x_{k+1} = \tilde x_k + \underbrace{\alpha(\tilde x_k-\tilde x_{k-1})}_{\text{momentum}} - \underbrace{\eta \nabla f(\tilde x_k)}_{\text{gradient step}}.
\end{equation}
In this work, we focus on the Nesterov momentum \eqref{eq:nm} version \cite{nesterovbook}. 
\begin{align}\label{eq:nm}\tag{NM}\left\{
    \begin{array}{ll}
        \tilde{y}_k &= \tilde{x}_k+ \alpha_k(\tilde{z}_{k} - \tilde{x}_k),  \\
        \tilde{x}_{k+1} &=  \tilde{y}_k  - \gamma \nabla f(\tilde{y}_k),\\
        \tilde{z}_{k+1} &=   \tilde{z}_{k} + \beta_k(\tilde{y}_k - \tilde{z}_{k}) - \gamma' \nabla f(\tilde{y}_k).
    \end{array}
\right.
\end{align}
Although its formulation is slightly more involved than Polyak’s Heavy Ball,
the latter exhibits worse convergence guarantees in some settings \cite{ghadimi2015global,goujaud2025provable}.
As for gradient descent, taking the limit of \eqref{eq:nm} as stepsize $\gamma$ vanishes yields a limit Nesterov momentum ODE \eqref{eq:nmo}, \cite[Appendix A.1]{pmlr-v202-kim23y}
\begin{align}\label{eq:nmo}\tag{NMO}\left\{
    \begin{array}{ll}
        \dot{x}_t &= \eta_t(z_t-x_t) - \gamma \nabla f(x_t), \\
        \dot{z}_t &= \eta'_t(x_t-z_t) - \gamma' \nabla f(x_t). 
    \end{array}
\right.
\end{align}
Since the strong connection between these ODEs and the corresponding algorithm has been established \cite{suboydcandes}, their study has gained a rising interest both in the convex \hypertarget{cont_param}{{}} \cite{siegel2021accelerated,shi2018understanding,attouch2020firstorder, aujdossrondPL,li2024linear,guo2024speed} and non-convex case \cite{okamura2024primitive,hermant2024study,guptanesterov,renaud2025provably}. 

\noindent
\textbf{Continuized Parameterization \:}
We use a specific parameterization of \eqref{eq:nm} with stochastic parameters that arises from the \textit{continuized} method \cite{even2021continuized}. Precisely, for some $\eta,\eta' \in \R$ such that $\eta+\eta'>0$, and some $\{T_k \}_{k \in \N}$ such that $T_{k+1}-T_k \overset{i.i.d}{\sim} \mathcal{E}(1)$, we fix in \eqref{eq:nm}:
\begin{align*}
    &\alpha_k = \frac{\eta}{\eta+\eta'}\bpar{1 -e^{-(\eta + \eta')(T_{k+1}-T_k)} },\\
    &\beta_k = \eta' \frac{\bpar{1- e^{-(\eta+\eta')(T_{k+1}-T_k)}}}{\eta' + \eta e^{-(\eta+\eta')(T_{k+1}-T_k)}}.
\end{align*}
With this parameterization, the algorithm is inherently stochastic, yielding convergence guarantees holding with high probability. 
In the case of minimizing smooth strongly-quasar convex functions, the best known bounds are achieved using this parameterization \cite{wang2023continuized}. In contrast, more classical deterministic parameterization achieves it only if assuming more structure \cite{hermant2024study}, or when using search-procedures to compute parameters \cite{hinder2020near,nesterov2021primal,guminov2023accelerated}, see details in Appendix~\ref{app:related_work_tab}.
The analysis of \eqref{eq:nm} using the continuized parameterization involves a Poisson-driven SDE; we refer to \cite{hermant2025continuized} for a methodological introduction to this continuized viewpoint.
\subsection{Classical setting for acceleration}\label{sec:strongly_convex}
\begin{table*}[]
\caption{Comparison of convergence rates up to a numerical constant, for \eqref{eq:gf}, \eqref{eq:nmo}, \eqref{eq:gd} and \eqref{eq:nm} properly parameterized, for functions belonging to $\strongconv$, $\sqc$ and $\pl$, intersected with $\ls$ for algorithms. Most can be found in the literature, see Appendix~\ref{app:related_work_tab}.}
\label{table:sc_sqc_rates}
\vspace{0.3em}
\centering
\begin{tabular}{|l|l|l|l|l|}
\hline
\diagbox{Function class}{Equation/Algorithm}     & GF           & NMO                   & GD                     & NM                     \\ \hline
$\mu$-strongly convex ($\strongconv$)       & $ \mu$       & $ \sqrt{\mu}$         & $\mu/L$        & $\sqrt{\mu/L}$        \\ \hline
$(\tau,\mu)$-strongly quasar-convex ($\sqc$)& $\tau \mu$ & $ \tau \sqrt{\mu} $ & $\tau \mu/L$ & $\tau  \sqrt{\mu/L}$ \\ \hline
$\mu$-Polyak-\L{}ojasiewicz ($\pl$)       & $ \mu$       & $(-)$         & $\mu/L$        & $\mu/L$        \\ \hline
\end{tabular}
\end{table*}

We first introduce a geometrical setting in which momentum-based acceleration is long established.
\begin{definition}
    For $L>0$, we call $\ls$ the set of $L$-smooth functions, namely $f$ such that $\forall x,y\in \R^d,
 ~ f(x)-f(y)-\dotprod{\nabla f(y),x-y}  \le \frac{L}{2}\norm{x-y}^2$.
\end{definition}
\begin{definition}
    For $\mu>0$, we call $\strongconv$ the set of $\mu$-strongly convex functions, namely $f$ such that $\forall x,y \in \R^d,~ f(x)+\dotprod{\nabla f(x),y-x} + \frac{\mu}{2}\norm{x-y}^2 \le f(y).$
\end{definition}
Intuitively, $L$-smoothness and $\mu$-strong convexity provide global upper and lower bounds on the curvature of $f$.
For a $C^2$ function, $f \in\strongconv \cap \ls $ implies that any eigenvalue $\lambda(x)$ of $\nabla^2 f(x)$ is such that $\mu \le \lambda(x) \le L$, for any $x\in\R^d$. The ratio $L/\mu \ge 1 $ is called \textit{condition number}, and characterizes how ill-conditioned the minimization problem is—a higher condition number typically reflecting a more difficult problem. $\strongconv \cap \ls$ is a favorable setting for minimization: there exists a unique critical point $x^\ast$, satisfying $f(x^\ast) = f^\ast$,\hypertarget{cr}{{}} and linear convergence of function values is ensured by first order methods, namely
\[\log (f(x_t)-f^\ast) \le -\Lambda t + \log(K(x_0)), \]
with $\Lambda > 0$ being the \textbf{convergence rate}, $K(x_0) > 0$ depends on the initialization. Linear convergence also holds in the discrete case, \textit{i.e.}, replacing $\xcont$ by $\xdisc$ and $t$ by $k$.
In this work, we specifically focus on the convergence rate value $\Lambda$.
In the $\strongconv \cap \ls$ regime, the improvement of \eqref{eq:nm} over \eqref{eq:gd} is well established \cite{nesterov2004introductory}, see 
the convergence rates in Table~\ref{table:sc_sqc_rates}. Precisely, as typically $L/\mu \gg 1$, improving from $\mu/L$ to $\sqrt{\mu/L}$ yields a substantial gain, often called \textit{acceleration}. However, many problems of interest do not fall within $\strongconv \cap \ls$. This motivates to identify more general settings where momentum-based acceleration remains possible.
 \subsection{Relaxed setting}\label{sec:relax_setting}
A first relaxation of $\strongconv$ and $\ls$ consists in lower and upper
global quadratic bounds of the function.
        \begin{definition}\label{ass:upp_qg}
                For $\mu_0,L_0 > 0$, we call: $\qginf$ the set of functions $f$ that verify
  $  f(x) - f^\ast \ge \frac{\mu_0}{2}\lVert x - x^\ast \rVert^2$, $\qg$ the set of functions $f$ that verify 
  $  f(x) - f^\ast \le \frac{L_0}{2}\lVert x - x^\ast \rVert^2$.
         \end{definition}
         $\qginf \cap \qg$ includes functions with pathological behavior, \textit{e.g.} having an infinite amount of critical points in any vicinity of $x^\ast$ \cite[Proposition 9]{hermant2024study}. This may discard the possibility of targeting a global minimum $f^\ast$.
          A key relaxation of strong convexity that does not preclude this objective is the class of Polyak-\L{}ojasiewicz (PL) functions \cite{Polyak1963,lojasiewicz1963topological}.   
\begin{definition}
    For $\mu>0$, we call $\pl$ the set of functions $f$ that satisfies $\forall x \in \R^d,~ \norm{\nabla f(x)}^2 \ge 2\mu(f(x)-f^\ast).$
\end{definition}
It immediately follows that for $f \in \pl$, any critical point is a global minimizer of $f$. From an optimization perspective, two key results hold for the class $\pl \cap \ls$: (i) gradient descent achieves a linear convergence rate \cite{Polyak1963}, and (ii) this rate is optimal among first-order algorithms \cite{PLlowerbound}. As a consequence, \eqref{eq:nm} cannot yield any significant theoretical improvement over this class, see Table~\ref{table:sc_sqc_rates}. This motivates studying more structured non-convex relaxations of $\strongconv \cap \ls$. The class of strongly quasar-convex functions (SQC) is expected to fulfill the demand.
\begin{definition}
    For $(\tau,\mu)\in (0,1]\times \mathbb{R}^\ast_+$, we call $\sqcf{\tau}{\mu}$ the set of functions $f$ that verify $\forall x\in \mathbb{R}^d,~ f^\ast \ge f(x) + \frac{1}{\tau}\dotprod{\nabla f(x),x^\ast-x} + \frac{\mu}{2}\norm{x-x^\ast}^2$.
\end{definition}
$\sqc$ is a relaxation of $\strongconv$, while being more structured than $\pl$, see \eqref{eq:inclusions}. One possible application of these functions is the training of generalized linear models \cite[Section 3]{wang2023continuized}. Compared to $\pl$, an important supplementary structure of $\sqc$ is to keep the property of $\strongconv$ to having a unique minimizer, although it can be relaxed \cite{guptanesterov,hermant2025continuized}. Thse functions can exhibit can exhibit strong oscillations, see Figure~\ref{fig:figure_sqc}. Unlike PL functions, this class admits distinct convergence rates for \eqref{eq:gd} and \eqref{eq:nm} when intersected with $\ls$, see \cref{table:sc_sqc_rates}.

\noindent
\textbf{Relation between classes \:}
The following inclusions hold \cite{karimi2016linear,hermant2024study}
\begin{equation}\label{eq:inclusions}
    \strongconv \subset \sqcf{1}{\mu} \subset{\sqc} \subset \plf{\mu\tau^2} \subset \qginff{\mu \tau^2},\quad \ls \subset \qgf{L}.
\end{equation}
    As observed by \cite{guille2021study}, the constants inherited from these inclusions may be severely suboptimal. For instance, oscillations of the functions outside a vicinity of minimizers may affect significantly the $\ls$ constant but not the $\qg$ constant, such that we could have that $f \in \ls \cap \qg$ with $L_0\ll L$. Similarly, a sudden drop of the gradient norm may affect the $\pl$ constant while letting the $\qginf$ almost untouched, such that $f \in \pl \cap \qginf$ while $\mu_0 \gg \mu$. This induces that comparing convergences rates across different classes  must be handled with care when using same constants for different classes, $\mu$ or $L$ classically.

\section{Acceleration under Strong Quasar-Convexity: Hidden Pitfalls}\label{sec:sqc_pitfalls}
Using momentum to optimize functions in $\sqc \cap \ls$ has been described as accelerated optimization \cite{hinder2020near,fu2023accelerated,hermant2024study}. This would place this class as an interesting intermediate class between $\strongconv \cap \ls$ and $\pl \cap \ls$: retaining the acceleration property of the first, while allowing non-convex behavior as the second does. 
The interpretation of retaining the acceleration property of  $\strongconv \cap \ls$ is
motivated by an analogy between complexity bounds. We show that performing such analogy is misleading. 

\paragraph{A hard PL instance is strongly quasar-convex}
\cite{PLlowerbound} introduces a \textit{hard function} belonging to $\pl \cap \ls$ such that that the number of needed gradient calls needed to obtain a point $\tilde x$ ensuring $f(\tilde x)-f^\ast \le \varepsilon$ scales at best as $\bigO(\frac{L}{\mu}\log(\frac{1}{\varepsilon}))$ for any first-order algorithms. This bound corresponds to a $\mu/L$ convergence rate, inducing that, up to numerical constants, the convergence rate of \eqref{eq:gd} is optimal, see Table~\ref{table:sc_sqc_rates}. Perhaps surprisingly, this hard function satisfies some strong regularity, for instance it has a unique global minimizer. More interesting in our case, it is actually strongly-quasar convex.
\begin{proposition}\label{prop:hard_fonc_sqc}
   The hard function belonging to $\pl \cap \ls$ from \cite{PLlowerbound}, for which the complexity bound of \eqref{eq:gd} can be improved at most by a numerical factor, also belongs to $\sqcf{\tau}{\mu'} \cap \ls$ for some parameters $(\tau,\mu')\in (0,1]\times \R_+^\ast$.
\end{proposition}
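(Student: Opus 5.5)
The plan is to reduce strong quasar-convexity of the hard function to a uniform \emph{aiming}-type inequality, and then check that inequality on the explicit construction of \cite{PLlowerbound}. I treat the following as given: the hard function $f$ lies in $\pl\cap\ls$ and has a unique global minimizer $x^\ast$. The proposition only asks for the existence of some $(\tau,\mu')$, so the constants may be crude.

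\textbf{Step 1: reduction to a restricted secant inequality.} By the definition of $\sqcf{\tau}{\mu'}$, it suffices to find $c>0$ with
\[
\dotprod{\nabla f(x),x-x^\ast}\ \ge\ c\,\norm{x-x^\ast}^2 \quad \text{for all } x\in\R^d .
\]
Indeed, $f\in\ls\subset\qgf{L}$ by \eqref{eq:inclusions}, so $f(x)-f^\ast+\frac{\mu'}{2}\norm{x-x^\ast}^2\le \frac{L+\mu'}{2}\norm{x-x^\ast}^2$. Taking $\mu'=c$ and $\tau=\frac{2c}{L+c}$ then gives
\[
\tau\bigl(f(x)-f^\ast+\tfrac{\mu'}{2}\norm{x-x^\ast}^2\bigr)\le c\norm{x-x^\ast}^2\le \dotprod{\nabla f(x),x-x^\ast},
\]
which is exactly the $\sqcf{\tau}{\mu'}$ inequality. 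Moreover $\tau\le 1$, because $c\le L$: an $L$-smooth function has an $L$-Lipschitz gradient, so $\dotprod{\nabla f(x),x-x^\ast}\le L\norm{x-x^\ast}^2$.

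\textbf{Step 2: reduction to the aiming condition \eqref{eq:intro_ac}.} Because the minimizer is unique, the PL inequality implies quadratic growth, $f\in\qginff{\mu}$. Hence
\[
\norm{\nabla f(x)}^2\ge 2\mu(f(x)-f^\ast)\ge \mu^2\norm{x-x^\ast}^2 ,
\]
so $\norm{\nabla f(x)}\ge\mu\norm{x-x^\ast}$. Therefore, if $f$ satisfies $\dotprod{\nabla f(x),x-x^\ast}\ge a\norm{\nabla f(x)}\norm{x-x^\ast}$ for some uniform $a>0$, Step 1 holds with $c=a\mu$. This route is essentially the observation of \cite{hermant2024study} recalled in Section~\ref{sec:related_aiming}.

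\textbf{Step 3: verifying the aiming condition on the explicit construction.} This is the main obstacle. I would write out the hard function from \cite{PLlowerbound} explicitly. It is built from a one-dimensional nonconvex profile, composed with a chain/zero-chain coupling of the coordinates together with a quadratic term.

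1. Translate so that $x^\ast=0$.
2. Expand $\dotprod{\nabla f(x),x}$ as a sum of per-block contributions.
3. Show each block contribution is nonnegative.
4. Show that the quadratic part dominates up to a constant, giving $\dotprod{\nabla f(x),x}\ge c\norm{x}^2$ directly. If instead the cleanest bound comes out in terms of $\norm{\nabla f(x)}\norm{x}$, conclude via Step 2.

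For the scalar profile, the key one-dimensional fact to establish is $s\,h'(s)\ge \kappa s^2$ for some $\kappa>0$, i.e., $h'$ has the sign of $s$ and grows at least linearly. I would check this separately near $0$, where Taylor expansion and positive curvature at the minimizer apply, and on the region where $h$ is nonconvex, where bounded oscillations are dominated by the linear part. The coupling terms need care: cross terms $x_i x_{i+1}$ must be absorbed, for instance by Young's inequality against the diagonal quadratic part.

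The constants $c$ obtained this way may be very small compared with $\mu$, which is fine here, since only existence is claimed. The smallness of $\tau\mu'$ relative to $\mu/L$ is presumably the point exploited afterwards in the paper.
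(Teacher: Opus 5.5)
\textbf{There is a genuine gap in Step~3.} Your Steps 1 and 2 are sound generic reductions. A secant inequality $\dotprod{\nabla f(x),x-x^\ast}\ge c\norm{x-x^\ast}^2$, together with the quadratic upper bound coming from $\ls$, gives $\sqcf{2c/(L+c)}{c}$. Aiming plus PL gives such an inequality. But Step~3 carries the entire content of the proposition, and the one concrete claim you make there is false for the actual construction.

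The scalar profile is $v_y$ of Appendix~\ref{app:hard_fonc_sqc}, with the first branch read as $\frac12 t^2$ (continuity at $\frac{31}{32}y$ requires this). On its two middle branches, $v_y'(t)=31(y-t)$ and $v_y'(t)=33(t-y)$. Hence $v_y'(y)=0$ while $v_y(y)=\frac{31}{64}y^2>0=v_y(0)$.

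So $t\,v_y'(t)\ge\kappa t^2$ fails at $t=y$ for every $\kappa>0$. The nonconvex region is not an oscillation dominated by a linear part; it contains a stationary point that is not a minimizer. Consequences:
\begin{itemize}
\item No per-coordinate secant inequality holds.
\item Step 2 cannot supply one, because $v_y$ is not PL. The one-dimensional aiming condition holds only trivially there.
\item Absorbing the chain's cross terms $x_ix_{i+1}$ by Young's inequality against a diagonal quadratic supplied by the profile cannot work.
\end{itemize}
The paper's appendix argument stumbles on the same point. It asserts that each $v_y$ is a one-dimensional $\pl\cap\ls$ function, hence SQC, and then invokes the sum rule for SQC functions with a common minimizer. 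But $v_y$ is not PL.

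\textbf{What is missing.} The profile only needs to contribute a sign; the coercivity comes entirely from the chain. For $y>0$, $v_y'(t)$ has the sign of $t$ on every branch, so $t\,v_y'(t)\ge 0$ and $v_y\ge 0=v_y(0)$. Write $q_{T,t}(x)=\frac12 x^\top Qx$ with $Q$ positive definite (the strongly convex quadratic of the appendix). Then $0$ is the unique minimizer of $g_{T,t}$, and
\[
\dotprod{\nabla g_{T,t}(x),x}=x^\top Qx+\sum_i x_i\,v_{y_i}'(x_i)\ \ge\ \lambda_{\min}(Q)\norm{x}^2 .
\]
For the rescaled instance $\tilde g(x)=a_0 g_{T,t}(y-a_1x)$ with $a_0>0$, the minimizer is $y/a_1$. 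The inequality above becomes a secant inequality with constant $c=a_0a_1^2\lambda_{\min}(Q)$. Your Step~1, applied with the $L$ for which $\tilde g\in\ls$, then gives $\tilde g\in\sqcf{2c/(L+c)}{c}$, and Step~2 is unnecessary.

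This secant-inequality route is also more robust than the paper's sum rule, since it never requires the nonconvex summands to be SQC individually.
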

The proof is essentially a collection of known result, see Appendix~\ref{app:hard_fonc_sqc}. This proposition definitely shows that an acceleration of the same nature than for $\strongconv \cap \ls$ is not possible in general when considering $\sqc \cap \ls$. Indeed, in the case of $\strongconv \cap \ls$, the ratio of the convergence rates between \eqref{eq:nm} and \eqref{eq:gd} is $\sqrt{\mu/L}/(\mu/L) =  \sqrt{L/\mu}$, which goes to infinity with the conditioning $L/\mu$ going to infinity. In particular, if we fix $L$, this shows that for this class, when taking $\mu \to 0$ the bound of \eqref{eq:nm} is arbitrarily better compared with \eqref{eq:gd}. Such behavior is not possible for the function from \cite{PLlowerbound}.

\paragraph{Weaker Assumption, Sharper Rate}
To compare the convergence rates of \eqref{eq:gd} and \eqref{eq:nm} when minimizing functions in
$\sqc \cap \ls$, it is natural to rely on the rates established in the literature for this
class, namely $\tau\mu/L$ for \eqref{eq:gd} and $\tau\sqrt{\mu/L}$ for \eqref{eq:nm},
see Table~\ref{table:sc_sqc_rates}.
However, any function belonging to $\sqc \cap \ls$ also belongs to $\plf{\mu'} \cap \ls$, for some parameter $\mu'$. 
As a result, one
may alternatively consider the convergence rate $\mu'/L$ of gradient descent derived for functions in
$\plf{\mu'} \cap \ls$.
Strikingly, in some cases, this latter rate
\textbf{can be sharper} than those given for the class $\sqc \cap \ls$.
In Figure~\ref{fig:pl_sqc_low_corr}, we plot two examples of functions that belong to $\sqc \cap \ls$, and so also belong to $\plf{\mu'}  \cap \ls$, for some parameters. For a given pair of algorithm and function class, we numerically compute the parameters of this class such that the associated convergence rate is maximized. 
For the first example, we find that the rate of \eqref{eq:nm} characterized by $\sqc$ is $\approx 10^{2}$ times larger than the one of \eqref{eq:gd} characterized by $\plf{\mu'}$.
However, for the second example, we find that the rate of \eqref{eq:gd} characterized by $\plf{\mu'}$ is $\approx 10^2$ times better than the rate of \eqref{eq:nm} characterized by $\sqc$. Thus, the worst-case bounds of \eqref{eq:gd} and \eqref{eq:nm} under $\sqc \cap \ls$ may be improved if considering the worst-case bound of \eqref{eq:gd} under $\pl \cap \ls$.

These findings indicate that our understanding of acceleration with strong quasar convexity is lacking. In fact, this class suffers from some problems, as we discuss in the next section.
\subsection{Problems of Strong Quasar-Convexity}\label{sec:specific_pitfall}
The intuition that a similar acceleration phenomenon occurs for $\sqc \cap \ls$ and $\strongconv \cap \ls$ can results from an appearing improvement from $\mu/L$ with \eqref{eq:gd} to $\sqrt{\mu/L}$ with \eqref{eq:nm} in both cases, see Table~\ref{table:sc_sqc_rates}, and from carrying the interpretation of $L/\mu$ as a condition number. This is somewhat deceptive, as we detail below. 

\paragraph{Parameter Interpretation}
The constant $\mu$ in $\strongconv$ parametrizes a global lower bound on the curvature. Reusing the notation $\mu$ in a similar way in $\sqc$ may therefore suggest a similar interpretation. In particular, for $f \in \sqc \cap \ls$, one may be tempted to view $L/\mu$ as a conditioning measurement, analogously to the setting $\strongconv \cap \ls$. Under this interpretation, one would expect $L/\mu \ge 1$, see Section~\ref{sec:strongly_convex}. 
However, since $f \in \sqcf{\tau}{\mu} \Rightarrow f \in \sqcf{\theta \tau}{\mu/\theta}$, for any $\theta \in (0,1]$ \cite[Observation 5]{hinder2020near}, it is possible to have $f \in \sqcf{\tau}{\mu} \cap \ls$ for some parameters satisfying $\mu > L$. This observation challenges both the condition-number interpretation of $L/\mu$ and calls into question the interpretation of the convergence rates in this setting.


\paragraph{The Optimal Choice of Parameters}
For many function classes, there exists a natural notion of optimal parameters.
For instance, if $f \in \pl$
for a range of constants $\mu>0$, the tighter class to describe $f$ is $\plf{\hat{\mu}}$ with $\hat \mu$
the largest admissible value.
Accordingly, the rates reported in Table~\ref{table:sc_sqc_rates} are maximized by choosing $\mu = \hat \mu$.
In contrast, because of the double parameterization, there is no
canonical way to choose among all admissible pairs $(\tau,\mu)$ an optimal one for a function in $\sqc$. Choosing the largest admissible $\tau$ and then the largest admissible $\mu$ leads to a different parameter pair than optimizing in the reverse order, see Appendix~\ref{app:choice_param_sqc}.
More critically, for a given function $f$, among the admissible pair $(\tau,\mu)$ ensuring $f \in \sqc$, the one 
that maximizes a given convergence rate depends on the algorithm or
dynamic under consideration, as illustrated in \cref{fig:sqc_flaws}.
As a consequence, statements of the form
\emph{“the rates of \eqref{eq:gf} and \eqref{eq:nmo} scale as
$\tau\mu$ and $\tau\sqrt{\mu}$”} without clarifying how the parameters are chosen may result in misleading comparisons.
\begin{figure}
    \centering
    \includegraphics[width=0.3\linewidth]{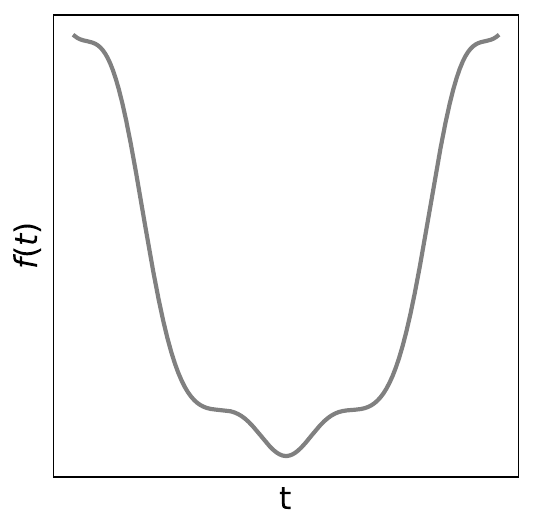}
    \hspace{-7pt}
    \includegraphics[width=0.3\linewidth]{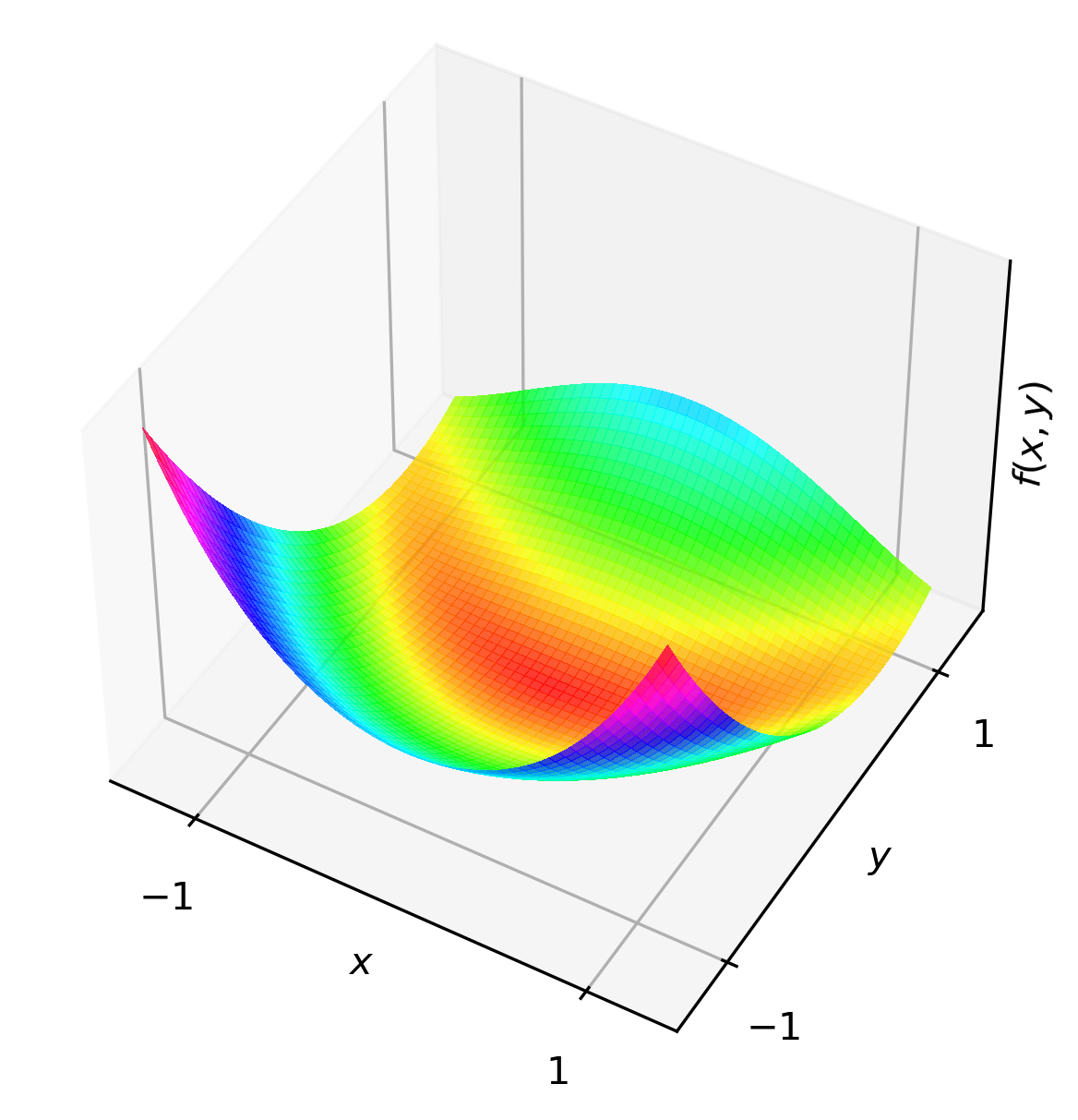}
    \begin{tabular}{|l|l|l|}
\hline
Algorithm (class) & $\Lambda$ (left plot)   & $\Lambda$ (right plot)       \\ \hline
GD ($\pl$)        & $3.2 \cdot 10^{-4}$  & $1.6 \cdot 10^{-2}$ \\ \hline
GD ($\sqc$)       & $1.3 \cdot 10^{-2}$   & $7.7 \cdot 10^{-5}$\\ \hline
NM ($\sqc$)       & $1.8 \cdot 10^{-2}$ & $1.8 \cdot 10^{-4}$  \\ \hline
\end{tabular}
    \caption{ Left: graph of $f(t) = 5(t+0.19\sin(5t))^2$, minimizer is $0$. Right: graph of $f(x,y) = 0.5(0.5x^2 -y)^2 + 0.05x^2$, minimizer is $(0,0)$. Bottom: table giving for each function the numerical value of the theoretical convergence rate for \eqref{eq:gd} or \eqref{eq:nm}, precising what class of function is used to characterize the bound (see Table~\ref{table:sc_sqc_rates}), where $\Lambda$ is the convergence rates. See implementation details in Appendix~\ref{app:param_details}. It shows that depending on the functions, $\pl$-based bound may be sharper than  $\sqc$-based bounds, and conversely.}
    \label{fig:pl_sqc_low_corr}
\end{figure}
\begin{figure}[t]
    \centering
    \includegraphics[width=0.85\linewidth]{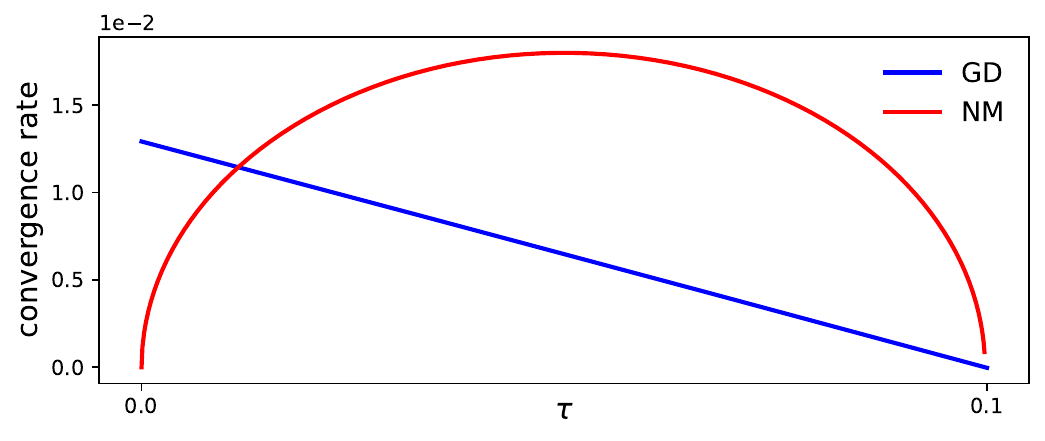}

     \caption{For a range of parameters $\tau$, we numerically compute the highest admissible $\mu$ ensuring $f \in \sqc $, and plot the numerical values of the associated theoretical convergence rates for \eqref{eq:gd} and \eqref{eq:nm} under $\sqc \cap \ls$, namely $\tau \mu/L$ and $\tau \sqrt{\mu/L}$, with $f(t) = 5(t+0.19\sin(5t))^2$.  See implementation details in Appendix~\ref{app:param_details}.
Note that the pair $(\tau, \mu)$ that maximizes the convergence rate of \eqref{eq:gd} differs from the pair that maximizes the one of \eqref{eq:nm}.} 
    \label{fig:sqc_flaws}
\end{figure}

\subsection{Discussion}
The previous discussions shows that the potential benefit provided by using \eqref{eq:nm} instead of \eqref{eq:gd} in strong quasar-convex optimization is not yet fully understood. We note that the problems raised in \cref{sec:specific_pitfall} can be solve if splitting $\sqc$ into two assumptions, \textit{i.e.} assuming $f \in \qginf$ and that it is quasar-convex, namely $\exists \tau \in(0,1]$ such that $f^\ast \ge f(x) + \frac{1}{\tau}\dotprod{\nabla f(x),x^\ast-x}$. However, the other points remain. The problem stems in the fact that convergence rates deduced under $\pl$ are not directly comparable to those deduce for classes such as $\sqc$, as the underlying geometrical constants encompasses different information. Our idea in the remaining of the paper is then to keep working with the $\pl$ class, and pairing it with an additional and intuitive geometrical information.

\section{Aiming condition as an Alternative Characterization of Convergence}\label{sec:aiming_cond}

We introduce a class of function that we will further intersect with $\pl$.
We assume that there exists a unique global minimizer $x^\ast$.
We initially assume the existence
of a unique minimizer $x^\ast$. We note that as discussed in \cref{sec:sqc_pitfalls}, functions that serve as lower bound for $\pl \cap \ls$ are designed in this setting, such that it remains an interesting setting to discuss acceleration. Anyway this assumption can be relaxed, as discussed in
Section~\ref{sec:avg} 
\begin{definition}\label{def:ac}
    For $a\in(0,1]$, we call $\ac$ the set of functions satisfying the \textit{aiming condition},\textit{ i.e.} $\forall x\in \R^d,~ \dotprod{\nabla f(x),x-x^\ast} \ge a \norm{\nabla f(x)}\norm{x-x^\ast}$.
\end{definition}

The parameter $a$ measures the alignment between the descent direction $-\nabla f$ and the
direction toward the minimizer. Values of $a$ close to $1$ indicate a strong alignment. In one dimension, any function in $\ac$ has perfect alignment, yielding $a=1$.
Such strong alignment  is particularly favorable to momentum methods, as it ensures that
the accumulated momentum reinforces progress toward $x^\ast$.
In contrast, if $a$ is small, momentum may amplify steps taken in poorly aligned directions.
For example, the rightmost function in Figure~\ref{fig:pl_sqc_low_corr} exhibits
a small aiming condition value, $a \approx 3\cdot10^{-4}$, which helps to explain why the convergence rate of
\eqref{eq:gd} is sharper than that of \eqref{eq:nm} in this case.  We discuss the important connection between $\ac$ and $\sqc$ in Appendix~\ref{app:link}. Before, we provide a toy construction that belongs to $\ac$.


\paragraph{A toy example}
 Let $f:\R_+ \to \R_+$ such that $f(0) = 0$ and $f'(t)> 0$ for all $t> 0$. Let $g : \R^d \to \R$, such that $g\ge 1$. Define $h :\R^d \to \R$
  \begin{equation}\label{eq:h}
    h(x) = f(\lVert x \rVert)g\left(\frac{x}{\lVert x \rVert} \right).
\end{equation}
$f$ describes the radial movement, while $g$ describes the angular oscillations. Such constructions were introduced as quasar-convex examples \cite{lee2016optimizingstarconvex}. In particular, $f \in \sqc$ implies $h \in \sqc$ \cite[Proposition 8]{hermant2024study}. 
 The more $g$ oscillates, the lower is $a$. This is formalized with the following result.
\begin{proposition}\label{lem:toy_example_ac}
Let $h$ defined as \eqref{eq:h}. Then, $h \in \ac$ with
    \[a= \inf_{r>0,u \in \mathbb{S}^{d-1}} \frac{1}{\sqrt{1+ \bpar{\frac{f(r)}{rf'(r)}}^2 \bpar{\frac{\norm{\nabla_{\sphere} g(u)}}{g(u)}}^2}},  \]
    where $\nabla_{\sphere} g(u)$ is the spherical gradient of $g$ restricted to $ \mathbb{S}^{d-1}$.
\end{proposition}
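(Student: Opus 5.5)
The plan is to compute $\nabla h$ explicitly in polar coordinates, split it into radial and tangential parts, and read off the cosine of the angle between $\nabla h(x)$ and $x - x^\ast$, where $x^\ast = 0$. First, $h(0)=0$. Since $f>0$ on $(0,\infty)$ (because $f(0)=0$ and $f'>0$) and $g\ge 1$, we get $h>0$ away from the origin, so $0$ is the unique minimizer. At $x=0$ the aiming inequality is trivial, since both sides vanish.

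For $x\neq 0$, write $r=\norm{x}$ and $u=x/r\in\sphere^{d-1}$. Differentiating \eqref{eq:h} with the chain rule gives $\nabla \norm{x} = u$ and $D(x/\norm{x}) = \frac{1}{r}(I-uu^\top)$. Hence
\[
\nabla h(x) = f'(r)g(u)\,u + \frac{f(r)}{r}(I-uu^\top)\nabla g(u) = f'(r)g(u)\,u + \frac{f(r)}{r}\nabla_{\sphere} g(u),
\]
where $\nabla_{\sphere} g(u) = (I-uu^\top)\nabla g(u)$ is the spherical gradient. This tangential component is orthogonal to $u$. The two pieces are orthogonal, so
\[
\dotprod{\nabla h(x),x} = r f'(r) g(u), \qquad \norm{\nabla h(x)}^2 = f'(r)^2g(u)^2 + \frac{f(r)^2}{r^2}\norm{\nabla_{\sphere} g(u)}^2 .
\]
This step is the only point requiring care: it relies on the projection $(I-uu^\top)$ appearing in the derivative of $x\mapsto x/\norm{x}$. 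It also implicitly requires $g$ to be differentiable near the sphere; if $g$ is only defined or differentiable on $\sphere^{d-1}$, one works directly with $\nabla_{\sphere} g$.

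Dividing, and using $f'(r)g(u)>0$, we obtain
\[
\frac{\dotprod{\nabla h(x),x}}{\norm{\nabla h(x)}\norm{x}} = \frac{f'(r)g(u)}{\sqrt{f'(r)^2g(u)^2 + \frac{f(r)^2}{r^2}\norm{\nabla_{\sphere} g(u)}^2}} = \frac{1}{\sqrt{1+\bpar{\frac{f(r)}{rf'(r)}}^2\bpar{\frac{\norm{\nabla_{\sphere} g(u)}}{g(u)}}^2}}.
\]
Taking the infimum over $r>0$ and $u\in\sphere^{d-1}$ shows that $\dotprod{\nabla h(x),x-x^\ast}\ge a\norm{\nabla h(x)}\norm{x-x^\ast}$ for every $x$, with $a$ as stated. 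This is exactly $h\in\ac$, provided $a>0$. If the infimum is zero, the statement should be read as saying that the inequality holds with that value of $a$, which is then vacuous.
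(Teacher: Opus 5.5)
Your proposal is correct and follows the paper's own route. Like the paper, you write $\nabla h(ru) = f'(r)g(u)\,u + \frac{f(r)}{r}\nabla_{\sphere} g(u)$, use the orthogonality of $u$ and $\nabla_{\sphere} g(u)$ to compute $\dotprod{\nabla h(x),x}$ and $\norm{\nabla h(x)}$, and take the infimum of the resulting cosine. Your extra remarks are sensible additions that the paper leaves implicit: the unique minimizer $x^\ast=0$, the trivial case $x=0$, and the need for the infimum to be positive for $h\in\ac$ to be meaningful.
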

The spherical gradient $\nabla_{\sphere} g(u)$ encodes the angular variation. If $g$ is constant or has no angular variation, \textit{e.g.} $g(x) = \norm{x}^2$, then $\nabla_{\sphere} g(u)= 0$. In this case $h$ is a radial function and it follows that $h \in \acf{1}$. Supplementary structure on $f$--such as belonging to $\pl$ or $\qginf$--will transfer to $h$. However it appears clearly in our result that it will also impact the value of $a$ through the ratio $f(r)/(rf'(r))$.
\subsection{Link between aiming condition and strong quasar convexity}\label{app:link}
In this section, we investigate the link between $\ac \cap \pl$ and relaxations of strong convexity. 
\begin{lemma}\label{lem:scalar_prod_bound}
Let $f \in \pl \cap \ac \cap \qg $. Let $
\mu_0 := \sup\{\mu' \ge \mu : f \in \qginff{\mu'}\}$.
Then, for all $x\in\R^d$,
\[ f^\ast \ge f(x) + \frac{1}{a}\sqrt{\frac{L_0}{\mu}}
\dotprod{\nabla f(x), x^\ast - x}
 + \frac{\sqrt{\mu \mu_0}}{2}\norm{x-x^\ast}^2.
\]
That is, $f \in \sqcf{a\sqrt{\mu/L_0}}{~ \sqrt{\mu \mu_0}}$.
\end{lemma}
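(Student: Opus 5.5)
The plan is to prove the displayed inequality pointwise by reducing everything to three scalars: $r:=\norm{x-x^\ast}$, $g:=\norm{\nabla f(x)}$ and $\Delta:=f(x)-f^\ast$. Rearranged, the claim says
\[ \Delta + \frac{\sqrt{\mu\mu_0}}{2}r^2 \le \frac{1}{a}\sqrt{\frac{L_0}{\mu}}\dotprod{\nabla f(x),x-x^\ast}. \]
By $\ac$ the right-hand side is at least $\sqrt{L_0/\mu}\,g\,r$. So it suffices to prove
\[ \Delta + \frac{\sqrt{\mu\mu_0}}{2}r^2 \le \sqrt{L_0/\mu}\,g\,r. \]
The case $x=x^\ast$ is trivial, since both sides vanish.

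\textbf{Preliminaries on the constants.} By the inclusion $\pl\subset\qginff{\mu}$ from \eqref{eq:inclusions}, the set defining $\mu_0$ is nonempty, so $\mu_0\ge\mu$. The condition $\qginff{\mu'}$ is closed under taking the supremum over $\mu'$, because it is a family of pointwise inequalities. Hence $f\in\qginff{\mu_0}$, i.e. $r^2\le 2\Delta/\mu_0$. If $\mu_0=\infty$, then $f$ has only the trivial point and there is nothing to prove. Combining $\qginff{\mu}$ with $\qg$ gives $\mu r^2\le 2\Delta\le L_0r^2$ for all $x$, hence $\mu\le L_0$.

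\textbf{Main chain of bounds.} By $\pl$, $g\ge\sqrt{2\mu\Delta}$, so the right-hand side is at least $\sqrt{2L_0\Delta}\,r$. I split this quantity into two halves.
\begin{itemize}
\item For the first half, $\qg$ gives $r\ge\sqrt{2\Delta/L_0}$. Therefore $\tfrac12\sqrt{2L_0\Delta}\,r\ge \tfrac12\sqrt{2L_0\Delta}\sqrt{2\Delta/L_0}=\Delta$.
\item For the second half, $\qginff{\mu_0}$ gives $\sqrt{\Delta}\ge\sqrt{\mu_0/2}\,r$. Therefore $\tfrac12\sqrt{2L_0\Delta}\,r\ge\tfrac12\sqrt{L_0\mu_0}\,r^2\ge\tfrac12\sqrt{\mu\mu_0}\,r^2$, where the last step uses $L_0\ge\mu$.
\end{itemize}
Summing the two halves gives exactly the required inequality. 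Reading off the coefficients then identifies the class as $\sqcf{a\sqrt{\mu/L_0}}{\sqrt{\mu\mu_0}}$.

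\textbf{Remaining checks.} I also need $\tau=a\sqrt{\mu/L_0}\in(0,1]$; this holds because $a\le1$ and $\mu\le L_0$. The main, though modest, obstacle is the bookkeeping in the splitting step: one must see that the factor $\sqrt{L_0/\mu}$ is exactly what allows the PL lower bound on $g$ to absorb $\Delta$ through the $\qg$ bound. The leftover slack $\sqrt{L_0\mu_0}\ge\sqrt{\mu\mu_0}$ then covers the quadratic term.
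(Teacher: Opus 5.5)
Your proof is correct and follows essentially the paper's route: $\ac$ and $\pl$ reduce the claim to a lower bound proportional to $\sqrt{f(x)-f^\ast}\,\norm{x-x^\ast}$, which is then controlled once via $\qg$ and once via $\qginff{\mu_0}$. Your splitting into two halves is exactly the paper's averaging step $\max\{u,v\}\ge\frac{u+v}{2}$, and your explicit use of $\mu\le L_0$ (to weaken the coefficient $\sqrt{L_0\mu_0}$ that the argument actually yields in this normalization down to $\sqrt{\mu\mu_0}$, and to check $\tau\le 1$) is a step the paper's ``which is the claim'' leaves implicit.
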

\begin{proof}
If $x=x^\ast$, the inequality holds trivially. Assume $x\neq x^\ast$.

\medskip
\noindent\textbf{Step 1 (AC + PL).}
Since $f\in\ac$, we have
\[
\dotprod{\nabla f(x),x-x^\ast} \ge a \norm{\nabla f(x)}\norm{x-x^\ast},
\quad\text{hence}\quad
\dotprod{\nabla f(x),x^\ast-x} \le -a \norm{\nabla f(x)}\norm{x-x^\ast}.
\]
Since $f\in\pl$, we also have $\norm{\nabla f(x)} \ge \sqrt{2\mu}\sqrt{f(x)-f^\ast}$, so
\begin{equation}\label{eq:pl_ac_core}
\dotprod{\nabla f(x),x^\ast-x}
\le -a\sqrt{2\mu}\,\sqrt{f(x)-f^\ast}\,\norm{x-x^\ast}.
\end{equation}

\medskip
\noindent\textbf{Step 2 (use $\qg$).}
Since $f\in\qg$,
\[
f(x)-f^\ast \le \frac{L_0}{2}\norm{x-x^\ast}^2
\quad\Rightarrow\quad
\norm{x-x^\ast}\ge \sqrt{\frac{2}{L_0}}\sqrt{f(x)-f^\ast}.
\]
Plugging this into \eqref{eq:pl_ac_core} yields
\begin{equation}\label{eq:pl_ac_0_corr}
\dotprod{\nabla f(x),x^\ast-x}
\le -a\sqrt{2\mu}\sqrt{\frac{2}{L_0}}\,(f(x)-f^\ast)
= -2a\sqrt{\frac{\mu}{L_0}}\,(f(x)-f^\ast).
\end{equation}

\medskip
\noindent\textbf{Step 3 (use $\qginff{\mu_0}$).}
By definition of $\mu_0$, we have
\[
f(x)-f^\ast \ge \frac{\mu_0}{2}\norm{x-x^\ast}^2
\quad\Rightarrow\quad
\sqrt{f(x)-f^\ast}\ge \sqrt{\frac{\mu_0}{2}}\norm{x-x^\ast}.
\]
Plugging this into \eqref{eq:pl_ac_core} yields
\begin{equation}\label{eq:pl_ac_1_corr}
\dotprod{\nabla f(x),x^\ast-x}
\le -a\sqrt{2\mu}\sqrt{\frac{\mu_0}{2}}\norm{x-x^\ast}^2
= -a\sqrt{\mu \mu_0}\,\norm{x-x^\ast}^2.
\end{equation}

\medskip
\noindent\textbf{Step 4 (combine the two bounds).}
From \eqref{eq:pl_ac_0_corr} and \eqref{eq:pl_ac_1_corr}, we have
\[
\dotprod{\nabla f(x),x^\ast-x}
\le -\max\Bigl\{2a\sqrt{\frac{\mu}{L_0}}\,(f(x)-f^\ast),\; a\sqrt{\mu\mu_0}\,\norm{x-x^\ast}^2\Bigr\}.
\]
Since $\max\{u,v\}\ge \frac{u+v}{2}$ for $u,v\ge 0$, we obtain
\[
\dotprod{\nabla f(x),x^\ast-x}
\le
-\frac12\Bigl(
2a\sqrt{\frac{\mu}{L_0}}\,(f(x)-f^\ast) + a\sqrt{\mu\mu_0}\,\norm{x-x^\ast}^2
\Bigr),
\]
which is the claim.
\end{proof}
This lemma shows that $\pl \cap \ac \cap \qg $ induces a specific parameterization of strong-quasar convexity.  We note that this parameterization does not suffer from the problem raised in Section~\ref{sec:specific_pitfall}. Importantly, it also follows that similar convergence analysis ideas can be used for $\pl \cap \ac \cap \qg $ and $\sqc$.  
Conversely, if assuming some smoothness regularity, strong-quasar convexity implies the aiming condition.
\begin{proposition}\label{prop:converse_link}
    Let $f \in \sqc$. If $f$ also satisfies 
    \begin{equation}
         \forall x \in \R^d,~ \norm{\nabla f(x)}\le L\norm{x-x^\ast},
    \end{equation}
    then $f \in \acf{\frac{2\mu}{(2-\tau)L}}$.
\end{proposition}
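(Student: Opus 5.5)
The plan is to reduce the aiming inequality to a lower bound of the form $\dotprod{\nabla f(x),x-x^\ast}\ge c\,\norm{x-x^\ast}^2$ with $c=\frac{2\mu}{2-\tau}$. Indeed, using the assumed growth bound $\norm{\nabla f(x)}\le L\norm{x-x^\ast}$, we have $\norm{x-x^\ast}^2\ge \frac{1}{L}\norm{\nabla f(x)}\norm{x-x^\ast}$. Hence such a quadratic lower bound immediately gives $f\in\acf{c/L}$. The point $x=x^\ast$ is trivial, so I fix $x\neq x^\ast$ and set $r=\norm{x-x^\ast}$.

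\textbf{Step 1 (rearrange $\sqc$).} Rearranging the definition of $\sqc$ gives
\[\dotprod{\nabla f(x),x-x^\ast}\ge \tau\bpar{f(x)-f^\ast}+\frac{\tau\mu}{2}r^2 .\]
It therefore suffices to lower bound $f(x)-f^\ast$ by a multiple of $r^2$, that is, to show that $f$ has quadratic growth.

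\textbf{Step 2 (quadratic growth along the ray).} I would work on the ray $g(s)=f(x^\ast+s(x-x^\ast))$ for $s\in(0,1]$. Applying $\sqc$ at the point $x^\ast+s(x-x^\ast)$ yields the differential inequality
\[s\,g'(s)\ge \tau\bpar{g(s)-f^\ast}+\frac{\tau\mu}{2}s^2r^2 .\]
This is equivalent to
\[\frac{d}{ds}\Bigl[s^{-\tau}\bpar{g(s)-f^\ast}\Bigr]\ge \frac{\tau\mu r^2}{2}s^{1-\tau}.\]
Integrating from $0$ to $1$ gives $f(x)-f^\ast\ge \frac{\tau\mu}{2(2-\tau)}r^2$, which is the known inclusion of $\sqc$ into a quadratic-growth class.

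\textbf{Step 3 (combine).} Plugging the bound of Step 2 into Step 1, I would simplify to obtain the constant $c$, then conclude with the reduction described at the start.

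\textbf{Main obstacle.} The delicate points are twofold. The first is the boundary term at $s=0$ in Step 2. One needs $s^{-\tau}\bpar{g(s)-f^\ast}\to0$, which follows since $\nabla f(x^\ast)=0$ and $f$ is $C^1$, so $g(s)-f^\ast=o(s)$ and $\tau\le1$.

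The second, and the real difficulty, is the constant bookkeeping in Step 3. The naive combination produces
\[\tau\mu\Bigl(\frac{\tau}{2(2-\tau)}+\frac12\Bigr)=\frac{\tau\mu}{2-\tau},\]
which is smaller than the claimed $\frac{2\mu}{2-\tau}$ by a factor $\tau/2$. I would therefore first check the statement against this derivation. If the stated constant is intended, I would try the rescaling $f\in\sqcf{\tau}{\mu}\Rightarrow f\in\sqcf{\theta\tau}{\mu/\theta}$ from \cite[Observation 5]{hinder2020near} to trade the factor $\tau$. Failing that, I would try a sharper use of the ray inequality, for instance integrating $g'$ directly rather than $g$. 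Otherwise I would record the constant $\frac{\tau\mu}{(2-\tau)L}$.
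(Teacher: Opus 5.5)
Your route is the paper's: the paper also writes $\dotprod{\nabla f(x),x-x^\ast}\ge\nu\norm{x-x^\ast}^2\ge\frac{\nu}{L}\norm{\nabla f(x)}\norm{x-x^\ast}$ from a restricted secant inequality. The only difference is that it cites the inclusion $\sqc\subset\rsif{\nu}$ from \cite{hermant2024study} instead of proving it. Your Steps 1--2 prove that inclusion correctly. The boundary term is even harmless: integrate over $[\varepsilon,1]$ and drop $\varepsilon^{-\tau}(g(\varepsilon)-f^\ast)\ge 0$. Your arithmetic $\tau\mu\bigl(\frac{\tau}{2(2-\tau)}+\frac12\bigr)=\frac{\tau\mu}{2-\tau}$ is right.

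The mismatch you found is an error in the statement, not in your argument. The paper's proof uses $\sqc\subset\rsif{\frac{2\mu}{2-\tau}}$ (written there with $\gamma$ for $\tau$), and this inclusion is false.

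Here is a counterexample for any $(\tau,\mu)$. Let $c=\frac{\tau\mu}{2-\tau}$ and $f(x)=\frac{c}{2}\norm{x-x^\ast}^2$. Then $f(x)+\frac1\tau\dotprod{\nabla f(x),x^\ast-x}+\frac\mu2\norm{x-x^\ast}^2=\frac12\bigl(c-\frac{2c}{\tau}+\mu\bigr)\norm{x-x^\ast}^2=0=f^\ast$, so $f\in\sqcf{\tau}{\mu}$. Its RSI constant is exactly $c<\frac{2\mu}{2-\tau}$, and the gradient bound holds with $L=c$. The claimed aiming constant is then $\frac{2\mu}{(2-\tau)c}=\frac{2}{\tau}>1$, which Cauchy--Schwarz forbids wherever $\nabla f\neq 0$.

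In this example your constant $\frac{\tau\mu}{(2-\tau)L}$ equals $1$. So it cannot be multiplied by any factor larger than $1$, and no finer ray argument will produce the extra $2/\tau$.

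The rescaling of \cite[Observation 5]{hinder2020near} cannot help either. It keeps $\tau\mu$ fixed and lowers $\tau$, so it only decreases $\frac{\tau\mu}{2-\tau}$. Applied to the paper's formula, it gives $\frac{2\mu}{\theta(2-\theta\tau)L}\to\infty$ as $\theta\to 0$, which is a second way to see the statement is false.

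So drop the fallbacks. What is true, and what your proof establishes completely, is $f\in\acf{\frac{\tau\mu}{(2-\tau)L}}$.
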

See the proof in Appendix~\ref{app:prop:converse}.

\section{Acceleration for PL Functions with an Aiming Condition}\label{sec:cv_results}


In Section~\ref{sec:local_weight}, we observe the impact of the aiming condition when studying standard gradient methods \eqref{eq:gf} and \eqref{eq:gd}. We consider in Section~\ref{sec:mom_cv} the convergence of 
\eqref{eq:nmo} and \eqref{eq:nm}, and the question of acceleration.




\subsection{$\ac$ Gives Weight to the Local Information at $x^\ast$}\label{sec:local_weight}
We first compare convergence bounds obtained for \eqref{eq:gf}, depending on $f$ belongs to $\ac$ or not.
\begin{theorem}\label{thm:gf_pl_&_ac}
    Let $\xcont \sim \eqref{eq:gf}$. 

    \noindent
    (i) \cite[Theorem 3]{polyak2017lyapunov}  If $f \in \pl$,  then
    $$ f(x_t)-f^\ast \le (f(x_0)-f^\ast)e^{-2 \mu t}. $$
    (ii) If $f \in \pl \cap \ac \cap \qg$, then
    $$ f(x_t)-f^\ast \le \bpar{1+\sqrt\frac{L_0}{\mu_0}}(f(x_0)-f^\ast)e^{-a\sqrt{\mu \mu_0}t}, $$
    where $\mu_0 := \sup\{\mu' \ge\mu : f \in \qginff{\mu'}\} $.
\end{theorem}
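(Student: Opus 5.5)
Part (i) is the classical result quoted from \cite{polyak2017lyapunov}. Along \eqref{eq:gf} we have $\frac{d}{dt}(f(x_t)-f^\ast) = -\norm{\nabla f(x_t)}^2 \le -2\mu (f(x_t)-f^\ast)$, and Grönwall's lemma gives the bound.

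For part (ii) I will use a Lyapunov function that mixes the function gap with the distance to the minimizer, and the strong quasar-convexity reparameterization of Lemma~\ref{lem:scalar_prod_bound}. Set
\[
E_t := f(x_t)-f^\ast + \frac{c}{2}\norm{x_t-x^\ast}^2
\]
for some $c>0$. Differentiating along \eqref{eq:gf} gives
\[
\dot E_t = -\norm{\nabla f(x_t)}^2 - c\dotprod{\nabla f(x_t),x_t-x^\ast}.
\]
I drop the first term, which is nonpositive. The second term is bounded by Lemma~\ref{lem:scalar_prod_bound}, which states
\[
\dotprod{\nabla f(x),x^\ast-x}\le -a\sqrt{\mu/L_0}\,(f(x)-f^\ast) - \tfrac{a}{2}\sqrt{\mu\mu_0}\norm{x-x^\ast}^2 .
\]
This yields
\[
\dot E_t \le -c\,a\sqrt{\mu/L_0}\,(f(x_t)-f^\ast) - \tfrac{c\,a}{2}\sqrt{\mu\mu_0}\norm{x_t-x^\ast}^2 .
\]

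To obtain $\dot E_t\le -\Lambda E_t$ with $\Lambda=a\sqrt{\mu\mu_0}$, I need two inequalities. The distance term requires $\tfrac{c\,a}{2}\sqrt{\mu\mu_0}\ge \Lambda\tfrac{c}{2}$, which holds with equality. The gap term requires $c\,a\sqrt{\mu/L_0}\ge a\sqrt{\mu\mu_0}$, that is $c\ge\sqrt{L_0\mu_0}$. I therefore choose $c=\sqrt{L_0\mu_0}$. Grönwall then gives $E_t\le E_0e^{-a\sqrt{\mu\mu_0}t}$.

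Two conversions finish the proof. First, $f(x_t)-f^\ast\le E_t$ trivially. Second, $\qginff{\mu_0}$ gives $\frac{c}{2}\norm{x_0-x^\ast}^2\le \frac{c}{\mu_0}(f(x_0)-f^\ast)=\sqrt{L_0/\mu_0}\,(f(x_0)-f^\ast)$. Hence $E_0\le(1+\sqrt{L_0/\mu_0})(f(x_0)-f^\ast)$, which is exactly the constant in the statement.

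The main delicate point is the choice of $c$: it must balance the two terms of Lemma~\ref{lem:scalar_prod_bound} so that the rate is exactly $a\sqrt{\mu\mu_0}$ while the prefactor stays $1+\sqrt{L_0/\mu_0}$. I would also check that $\mu_0$ is attained, i.e. $f\in\qginff{\mu_0}$ at the supremum. This holds by taking limits in the inequality defining $\qginff{\mu'}$; the lemma already assumes it in any case.
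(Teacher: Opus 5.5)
Your proposal is correct and follows the paper's proof essentially verbatim. Both use the Lyapunov function $f(x_t)-f^\ast+\frac{c}{2}\norm{x_t-x^\ast}^2$, drop $-\norm{\nabla f(x_t)}^2$, invoke Lemma~\ref{lem:scalar_prod_bound}, choose $c=\sqrt{\mu_0L_0}$ (the paper's $\beta$), and bound $E_0$ via $\qginff{\mu_0}$. One small precision: the inequality you quote, with coefficient $\frac{a}{2}\sqrt{\mu\mu_0}$ on $\norm{x-x^\ast}^2$, is what the lemma's proof actually establishes and what the paper uses here. The strong quasar-convex form displayed in the lemma's statement is weaker and would not by itself yield the rate $a\sqrt{\mu\mu_0}$.
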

See the proof in Appendix~\ref{app:gf_pl_&_ac}. The assumption $f \in  \qg$ for bound (ii) is fairly mild. When considering algorithms, we will assume $f \in \ls$, such that they will naturally belong to $\qg$. We note that under the aiming condition, there is a worse constant factor $1+\sqrt{L_0/\mu_0}$, which mainly affects the bound for early iterations. In this work, we specifically focus on \hyperlink{cr}{convergence rates}.

\paragraph{Comparison of Convergence Rates} In addition to the PL constant $\mu$, the \hyperlink{cr}{convergence rates} in (ii) depends on a quadratic lower bound parameter $\mu_0$ and the aiming condition parameter $a$. As a result, rate (ii) improves upon rate (i) whenever $a \ge 2\sqrt{\mu/\mu_0}$. So, for functions with bad alignment, characterizing the convergence rate solely through the PL condition may lead to sharper rates. It is interesting to observe that $\mu_0$ appears in (ii), but not in (i). In the presence of flat plateaus, we may have $\mu \ll \mu_0$. This is due to $\mu_0$ being less affected by global irregularities of the function landscape, as we discuss next.

 \noindent\textbf{The Aiming Condition Exploits the Local Information at $x^\ast$ \:} For functions in $\strongconv \cap \ls$, the slowest direction toward the minimizer is the one with the smallest curvature~$\mu$, since it corresponds to the flattest direction (left plot of Figure~\ref{fig:worst_case} in the appendix).  
For functions in $\pl \cap \ls$, the situation is less clear: the trajectory may approach the minimizer either through low-curvature directions near $x^\ast$ or through plateau-like regions (right plot of Figure~\ref{fig:worst_case}). Thus, it is ambiguous whether the relevant $\mu$ is determined by local curvature at $x^\ast$ or by the plateaus, as $\pl$ handles both information.
Bound~(i) of Theorem~\ref{thm:gd_pl_&_ac} does not separate these effects, but bound~(ii) does: it reveals a trade-off between global plateaus and local curvature~$\mu_0$. In the special case $a=1$, if $\mu=\mu_0$, flatness near $x^\ast$ dominates and we recover the rate $\mathcal{O}(e^{-\mu t})$. If $\mu<\mu_0$, the plateaus matter, yielding a rate $e^{-\sqrt{\mu\mu_0}}$, which shows that plateaus do not fully determine the convergence.

\paragraph{Convergence of \eqref{eq:gd}}  Assuming the function belongs to $\ls$, we deduce similar convergence rates for \eqref{eq:gd}, with the difference that the $L$-smoothness constant intervenes.
\begin{theorem}\label{thm:gd_pl_&_ac}
    Let $f \in \ls$, $\xdisctilde \sim \eqref{eq:gd}$, $\gamma = \frac{1}{L}$.

    \noindent
    (i) \cite[Theorem 1]{karimi2016linear} If we also have $f \in \pl$,  then
    $$ f(\tilde x_k)-f^\ast \le \bpar{1-\frac{\mu}{L}}^k(f(\tilde x_0)-f^\ast). $$
    (ii) If we also have $f \in \pl \cap \ac $, then
    $$ f(\tilde x_k)-f^\ast \le  K_0\bpar{1-a\frac{\sqrt{\mu \mu_0}}{L}}^k(f(\tilde x_0)-f^\ast). $$
    with $K_0 := \bpar{1+\sqrt\frac{L_0}{\mu_0}}$, $\mu_0 := \sup\{\mu' \ge \mu: f \in \qginff{\mu'}\} $ and $L_0 :=\inf\{L' \le  L:f \in \qgf{L'}\}$.
\end{theorem}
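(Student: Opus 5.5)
The plan is a one-step Lyapunov contraction: find a function $E_k$ with $E_{k+1}\le(1-\rho)E_k$ for $\rho := a\sqrt{\mu\mu_0}/L$. This is the discrete analogue of the argument for Theorem~\ref{thm:gf_pl_&_ac}(ii). Write $\delta_k := f(\tilde x_k)-f^\ast$, $r_k := \norm{\tilde x_k-x^\ast}$ and $g_k:=\norm{\nabla f(\tilde x_k)}$. I would use
\[
E_k := \delta_k + c\, r_k^2,\qquad c:=\tfrac{1}{2}\sqrt{L_0\mu_0}.
\]
First I check that the constants are well defined. By \eqref{eq:inclusions}, $\pl\subset\qginff{\mu}$ and $\ls\subset\qgf{L}$, so $\mu\le\mu_0$ and $L_0\le L$. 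Since $\qginff{\mu_0}$ and $\qgf{L_0}$ force $\mu_0\le L_0$, we also get $c\le L/2$. The sup and inf defining $\mu_0,L_0$ are attained because the quadratic-growth inequalities pass to the limit pointwise.

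Next come the two one-step estimates with $\gamma=1/L$. The descent lemma from $\ls$ gives $\delta_{k+1}\le \delta_k-\frac{1}{2L}g_k^2$. Expanding the square and then applying $\ac$ gives
\[
r_{k+1}^2=r_k^2-\tfrac{2}{L}\dotprod{\nabla f(\tilde x_k),\tilde x_k-x^\ast}+\tfrac{1}{L^2}g_k^2\le r_k^2-\tfrac{2a}{L}g_kr_k+\tfrac{1}{L^2}g_k^2 .
\]
Adding the two, the $g_k^2$ terms combine to $-\frac{g_k^2}{2L}+\frac{c\,g_k^2}{L^2}\le 0$ because $c\le L/2$. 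Hence
\[
E_{k+1}\le E_k-\tfrac{2ac}{L}g_kr_k .
\]

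The key step is to lower bound the cross term $\frac{2ac}{L}g_kr_k$ by $\rho E_k$. I split it into two equal halves and treat each as in Lemma~\ref{lem:scalar_prod_bound}, using PL in both halves: $g_k\ge\sqrt{2\mu\delta_k}$.

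\emph{First half.} Using $\qgf{L_0}$, i.e. $r_k\ge\sqrt{2\delta_k/L_0}$, we get
\[
\tfrac{ac}{L}g_kr_k\ge \tfrac{2ac}{L}\sqrt{\mu/L_0}\,\delta_k=\rho\,\delta_k .
\]

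\emph{Second half.} Using $\qginff{\mu_0}$, i.e. $\sqrt{\delta_k}\ge\sqrt{\mu_0/2}\,r_k$, we get
\[
\tfrac{ac}{L}g_kr_k\ge \tfrac{ac}{L}\sqrt{\mu\mu_0}\,r_k^2=\rho\,c\,r_k^2 .
\]

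Summing the halves gives $E_{k+1}\le(1-\rho)E_k$. Also $\rho\le a\le 1$ since $\mu\le\mu_0\le L$, so the factor $1-\rho$ is nonnegative.

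Finally I unroll the recursion: $\delta_k\le E_k\le(1-\rho)^kE_0$. The initial value satisfies $E_0\le \delta_0+c\cdot\frac{2}{\mu_0}\delta_0=\bigl(1+\sqrt{L_0/\mu_0}\bigr)\delta_0=K_0\delta_0$, which gives (ii). Part (i) is the cited classical result and needs no new argument. The main obstacle is the choice of $c$. It must satisfy two requirements at once: it must be small enough, $c\le L/2$, for the descent-lemma term to absorb $g_k^2/L^2$; and it must be large enough, $c=\frac{1}{2}\sqrt{L_0\mu_0}$, for the cross term to produce exactly the rate $a\sqrt{\mu\mu_0}/L$ on both $\delta_k$ and $c\,r_k^2$. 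The ordering $\mu_0\le L_0\le L$ is what makes the two requirements compatible.
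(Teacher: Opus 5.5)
Your proposal is correct and is essentially the paper's proof. You use the same Lyapunov function $E_k=f(\tilde x_k)-f^\ast+\frac{\sqrt{\mu_0L_0}}{2}\norm{\tilde x_k-x^\ast}^2$, absorb the $\norm{\nabla f(\tilde x_k)}^2$ terms the same way via $\sqrt{\mu_0L_0}\le L_0\le L$, and your two-halves split of the cross term is exactly the $\max\{u,v\}\ge\frac{u+v}{2}$ step behind Lemma~\ref{lem:scalar_prod_bound}; your extra checks (that $\mu_0,L_0$ are attained and that $1-\rho\ge 0$) are details the paper leaves implicit.
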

Broadly speaking, the convergence rates of Theorem~\ref{thm:gd_pl_&_ac} are the same as those of Theorem~\ref{thm:gf_pl_&_ac}, divided by $L$, such that we can draw similar insights, see the proof in Appendix~\ref{app:gd_pl_ac}.

\subsection{Acceleration using Momentum with Large Enough Aiming Condition Constant}\label{sec:mom_cv}
We state the convergence rates of \eqref{eq:nmo} and \eqref{eq:nm}, under the aiming condition.
\begin{theorem}\label{thm:pl_accel}
Let $f\in \pl \cap \ac \cap \ls$, $L_0 := \inf \{L'\le L:  f \in \qgf{L'} \}$, $\mu_0 := \sup\{\mu' \ge \mu : f \in \qginff{\mu'}\}$.

(i)  Let $\xcont \sim \eqref{eq:nmo}$ and constant parameters $\gamma\ge 0$,  $\gamma' = (\mu_0 L_0)^{-1/4}$, $\eta = (\mu_0 L_0)^{1/4}$, $\eta' = a\bpar{\mu_0/L_0}^{1/4}\sqrt{\mu}$. Then,
     \begin{equation*}
     f(x_t)-f^\ast \le K_0 e^{-a \bpar{\frac{\mu_0}{L_0}}^{\frac{1}{4}}\sqrt{\mu}t},
 \end{equation*}
 where $K_0 :=  \bpar{1+\sqrt\frac{L_0}{\mu_0}}(f(x_0)-f^\ast)$ 
 
(ii) Let $\xdisctilde$ verify \eqref{eq:nm} with the \hyperlink{cont_param}{continuized parameterization}, with 
$\eta = (\mu_0 L_0)^{1/4}/\sqrt{L},\quad \gamma = 1/L,\quad \gamma' = (( \mu_0 L_0)^{1/4}\sqrt{ L})^{-1}, $ and $\eta' = a\bpar{\mu_0/L_0}^{1/4}\sqrt{\mu/L}$. Then,  
    with probability  $1-\frac{1}{c_0} - \exp(-(c_1-1-\log(c_1))k)$, for some $c_0 > 1$ and $c_1 \in (0,1)$ we have
$$ f(\tilde x_k)-f^\ast \le K_1e^{-a\bpar{\frac{\mu_0}{L_0}}^{1/4}\sqrt{\frac{\mu}{L}}(1-c_1)k}, $$
where $K_1 := c_0 \bpar{1+\sqrt\frac{L_0}{\mu_0}}(f(x_0)-f^\ast)$.
\end{theorem}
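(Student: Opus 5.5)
The plan is to reduce to the strongly quasar-convex setting through Lemma~\ref{lem:scalar_prod_bound}, and then run a Lyapunov analysis of the same type as for $\sqc$. For $f\in \pl\cap\ac\cap\ls$ we have $f\in\qg$, and the lemma gives, for all $x$,
\[
\dotprod{\nabla f(x),x-x^\ast} \ge a\sqrt{\tfrac{\mu}{L_0}}\,(f(x)-f^\ast) + \tfrac{a}{2}\sqrt{\mu\mu_0}\,\norm{x-x^\ast}^2 ,
\]
which is $f\in\sqcf{\tau}{\mu'}$ with $\tau=a\sqrt{\mu/L_0}$ and $\mu'=\sqrt{\mu\mu_0}$. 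The announced rate $a(\mu_0/L_0)^{1/4}\sqrt{\mu}$ equals $\tau\sqrt{\mu'}$, the known NMO rate for $\sqc$ (Table~\ref{table:sc_sqc_rates}). The parameters should be exactly the $\sqc$ choices: $\eta=\sqrt{\mu'}$ and $\eta'=\tau\sqrt{\mu'}$, with $\gamma'=1/\sqrt{\mu'}$ (continuous time), and $\gamma,\gamma'$ rescaled by $L$ in discrete time.

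\textbf{(i) Continuous time.} Use the energy
\[
E_t=f(x_t)-f^\ast+\tfrac{\mu'}{2}\norm{z_t-x^\ast}^2 .
\]
Differentiate along \eqref{eq:nmo}, where the term $-\gamma\norm{\nabla f}^2\le 0$ is discarded. The cross terms $\eta\dotprod{\nabla f(x_t),z_t-x_t}$ and $\mu'\eta'\dotprod{x_t-z_t,z_t-x^\ast}$ are then handled with the identity $2\dotprod{x-z,z-x^\ast}=\norm{x-x^\ast}^2-\norm{z-x^\ast}^2-\norm{x-z}^2$. The choice $\gamma'\mu'=\eta$ makes the two $\dotprod{\nabla f,z-x^\ast}$ contributions combine into $-\eta\dotprod{\nabla f(x_t),x_t-x^\ast}$. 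Applying the strengthened inequality above then gives $\dot E_t\le -\tau\sqrt{\mu'}E_t$; this uses $\eta\tau=\eta'$ and $\eta'\le\eta$, and the leftover $-\norm{x-z}^2$ term is nonpositive. Grönwall yields $E_t\le E_0e^{-\tau\sqrt{\mu'}t}$. Finally, $f(x_t)-f^\ast\le E_t$. For $E_0$, assume $z_0=x_0$ and use $\qginff{\mu_0}$: then $\frac{\mu'}{2}\norm{x_0-x^\ast}^2\le \sqrt{\mu/\mu_0}\,(f(x_0)-f^\ast)\le (f(x_0)-f^\ast)$, which is within the stated $K_0$. The constant $K_0=(1+\sqrt{L_0/\mu_0})(f(x_0)-f^\ast)$ also arises if one instead bounds via $\qg$, as in Theorem~\ref{thm:gf_pl_&_ac}.

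\textbf{(ii) Discrete time.} Follow the continuized method of \cite{even2021continuized,wang2023continuized}, as presented in \cite{hermant2025continuized}. Between the Poisson times $T_k$, the pair $(x,z)$ follows the linear mixing ODE $\dot x=\eta(z-x)$, $\dot z=\eta'(x-z)$. Integrating this ODE produces exactly the stated $\alpha_k,\beta_k$. At the jump times, gradient steps $x\mapsto x-\frac1L\nabla f(x)$ and $z\mapsto z-\gamma'\nabla f(x)$ are taken. Apply Itô's formula for Poisson-driven SDEs to $e^{\lambda t}E_t$ with $\lambda=\tau\sqrt{\mu'/L}$. The gradient step on $x$ gives the descent $-\frac{1}{2L}\norm{\nabla f}^2$ by $L$-smoothness. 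The $z$ jump contributes $\frac{\mu'}{2}\gamma'^2\norm{\nabla f}^2$, and the choice $\gamma'=1/\sqrt{\mu' L}$ balances this exactly against the descent. The drift part is then treated exactly as in (i), with rescaled parameters, showing that $e^{\lambda t}E_t$ is a supermartingale. This gives $\E{E_{T_k}e^{\lambda T_k}}\le E_0$.

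\textbf{High-probability conversion.} Markov's inequality gives $E_{T_k}\le c_0E_0e^{-\lambda T_k}$ with probability at least $1-1/c_0$. Since $T_k$ is a sum of $k$ i.i.d.\ $\mathcal{E}(1)$ variables, a Chernoff bound gives $T_k\ge(1-c_1)k$ except with probability $\exp(-(c_1-1-\log c_1)k)$. A union bound yields the statement.

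\textbf{Main obstacle.} The delicate step is the drift computation, in both (i) and (ii). One must check that the negative curvature term $\frac{a}{2}\sqrt{\mu\mu_0}\norm{x-x^\ast}^2$ produced by the lemma dominates the positive $\norm{x-x^\ast}^2$ term coming from the cross-term identity. This requires choosing the coefficient of $\norm{z-x^\ast}^2$ in $E$ to be $\mu'$, up to the factor tied to $\eta'$, and verifying $\eta'\le\eta$. If the constants do not close exactly, I would introduce a factor $\tfrac12$ in the weight of $\norm{z-x^\ast}^2$ and absorb the loss into numerical constants.
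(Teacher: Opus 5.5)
\textbf{Gap: the Lyapunov weight is too small.} Your architecture matches the paper's proof:
\begin{itemize}
\item the energy $e^{\theta t}\bigl(f(x_t)-f^\ast+\frac{\beta}{2}\|z_t-x^\ast\|^2\bigr)$;
\item the coupling $\eta=\beta\gamma'$ to remove $\langle\nabla f(x_t),z_t-x^\ast\rangle$, and $\eta'=\theta$ to remove $\|z_t-x^\ast\|^2$;
\item the three-point identity, then Lemma~\ref{lem:scalar_prod_bound};
\item in (ii), the continuized It\^o formula with the balance $\beta\gamma'^2=1/L$, then optional stopping, Markov and Chernoff.
\end{itemize}
However, the weight you choose is wrong, and with it the proposal does not prove the stated theorem.

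You take $\beta=\mu'=\sqrt{\mu\mu_0}$ from the ``that is'' clause of Lemma~\ref{lem:scalar_prod_bound}. Dividing the inequality you quote by $\tau=a\sqrt{\mu/L_0}$ gives the quadratic coefficient $\frac{1}{2\tau}a\sqrt{\mu\mu_0}=\frac12\sqrt{\mu_0L_0}$. The sharp reading is therefore $f\in\sqcf{\tau}{\sqrt{\mu_0L_0}}$; the value $\sqrt{\mu\mu_0}$ is only a weakening. Three consequences follow.
\begin{itemize}
\item Your identity $\tau\sqrt{\mu'}=a(\mu_0/L_0)^{1/4}\sqrt{\mu}$ is false. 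With $\mu'=\sqrt{\mu\mu_0}$ one gets $\tau\sqrt{\mu'}=a\mu^{3/4}\mu_0^{1/4}L_0^{-1/2}$, smaller by the factor $(\mu/L_0)^{1/4}$.
\item Your parameters $\eta=(\mu\mu_0)^{1/4}$, $\gamma'=(\mu\mu_0)^{-1/4}$ (resp.\ $\gamma'=1/\sqrt{\mu'L}$ in (ii)) are not the prescribed ones. Run with them, your computation closes, but only at the slower rate.
\item Run with the prescribed $\eta=(\mu_0L_0)^{1/4}$, $\gamma'=(\mu_0L_0)^{-1/4}$, the coupling $\eta=\mu'\gamma'$ breaks (it would require $\mu=L_0$). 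This leaves a term $(\eta-\mu'\gamma')\langle\nabla f(x_t),z_t-x^\ast\rangle$ that nothing absorbs: the theorem allows $\gamma=0$, and the $-\|z_t-x^\ast\|^2$ term is entirely consumed by the rate.
\end{itemize}

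\textbf{Shrinking the weight goes the wrong way.} After the two cancellations and the lemma, the coefficient of $\|x_t-x^\ast\|^2$ is $\frac12(\beta\eta'-\eta a\sqrt{\mu\mu_0})$. With $\eta'=\theta=\eta a\sqrt{\mu/L_0}$, this is nonpositive iff $\beta\le\sqrt{\mu_0L_0}$. Meanwhile the rate $\eta a\sqrt{\mu/L_0}$ increases with $\beta$ under the normalization $\eta=\sqrt{\beta}$ (resp.\ $\eta=\sqrt{\beta/L}$, forced by $\beta\gamma'^2=1/L$ in (ii)). So the largest admissible weight $\beta=\sqrt{\mu_0L_0}$ is exactly what the statement needs. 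Any smaller weight costs the condition-dependent factor $(\beta/\sqrt{\mu_0L_0})^{1/2}$, which is precisely the acceleration content, not a numerical constant.

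\textbf{The fix.} Take $\mu'=\sqrt{\mu_0L_0}$. Then your choices $\eta=\sqrt{\mu'}$, $\gamma'=1/\sqrt{\mu'}$, $\eta'=\tau\sqrt{\mu'}$ (and $\eta=\sqrt{\mu'/L}$, $\gamma'=1/\sqrt{\mu'L}$ in (ii)) coincide with the stated ones. Both remaining coefficients vanish exactly, and your argument becomes the paper's proof. The constant also comes out as stated: by $\text{QG}_{-}^{\mu_0}$ (not $\text{QG}_{+}$), $\frac{\sqrt{\mu_0L_0}}{2}\|x_0-x^\ast\|^2\le\sqrt{L_0/\mu_0}\,(f(x_0)-f^\ast)$. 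The better constant $1+\sqrt{\mu/\mu_0}$ you obtained was a symptom of the too-small weight.
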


See the proof in Appendix~\ref{app:pl_accel}. The result of Theorem~\ref{thm:pl_accel} (ii) holds under high probability, due to the use of the \textit{continuized} version of Nesterov. 
In the following, we compare convergence rates despite this difference.

\noindent
\textbf{Comparison of the Convergence Rates: Discrete Case \:} The \hyperlink{cr}{convergence rates}  in Theorem~\ref{thm:gd_pl_&_ac} and Theorem~\ref{thm:pl_accel} (ii) are
\[
\frac{\mu}{L},\qquad 
a\frac{\sqrt{\mu \mu_0}}{L},\qquad 
a\!\left(\frac{\mu_0}{L_0}\right)^{1/4}\!\sqrt{\frac{\mu}{L}}(1-c_1).
\]
Ignoring the factor $(1-c_1)$, the latter always dominates $a\frac{\sqrt{\mu\mu_0}}{L}$, such that we compare $\mu/L$ and $a(\mu_0/L_0)^{1/4}\sqrt{\mu/L}$. 
Thus, for functions in $\pl\cap\ac\cap\ls$, the rate of  \eqref{eq:nm} is better than the rate of \eqref{eq:gd} whenever
\[
a \;\ge\; \left(\frac{L_0}{\mu_0}\right)^{1/4}\sqrt{\frac{\mu}{L}}.
\]
This bound suggests that, in the presence of oscillations and flat plateaus, leading to $\mu/L \ll 1$, there is increased tolerance for poor alignment while still allowing acceleration.
We test this bound numerically on the PL lower-bound function of \cite{PLlowerbound}, and find that the two quantities scale similarly, in a sense described in Appendix~\ref{app:heuristic}, providing a heuristic support for the relevance of our bound.

\paragraph{Acceleration for Dynamical Systems: Flatness Matters}
Comparing the rates of Theorem~\ref{thm:gf_pl_&_ac} (i) and Theorem~\ref{thm:pl_accel} (i) yields an improvement of momentum if $a \ge (L_0/\mu_0)^{1/4}\sqrt{\mu}$, namely a similar bound as a in the discrete case, up to the $\sqrt{L}^{-1}$ factor. Contrasting with the discrete case, the rate of Theorem~\ref{thm:pl_accel} (i) does not necessarily improve upon Theorem~\ref{thm:gf_pl_&_ac} (ii). Improvement occurs whenever
\[
\mu_0 L_0 < 1.
\]
Since $L_0 \ge \mu_0$, this implies that if $\mu_0>1$ (the function is sharp near the minimizer), then \eqref{eq:gf} is preferable, whereas if $L_0<1$ (flatness near minimizer), then \eqref{eq:nmo} performs better.
In the intermediate regime $\mu_0 < 1 < L_0$, the outcome depends on the balance between the smallest and largest local curvatures: if $\mu_0$ is much smaller than~$1$ while $L_0$ is only moderately above~$1$, flatness dominates and \eqref{eq:nmo} wins; conversely, in the opposite case.
The advantage of \eqref{eq:nmo} on flat landscapes is consistent with previous observations for the Heavy Ball dynamics when $f\in\pl$ \cite{apidopoulos2022convergence, gess2023exponential}, whose dynamics are close to \eqref{eq:nmo}.

\subsection{Decoupling information: more interpretability, but may create sub-optimality.} 
From Lemma~\ref{lem:scalar_prod_bound}, it follows that we do not study in this Section~\ref{sec:cv_results} a class of function that is fundamentally different from $\sqc$. We rather introduce a new characterization of these kind of functions in the idea of having better interpretability. This is done by decoupling the information: angular information with $\ac$, and curvature with $\pl$. In contrast, all these information are blended with $\sqc$, and according to Section~\ref{sec:sqc_pitfalls}, in a fairly hidden way.

This separation of information comes at a cost: this may create sub-optimality. A clear example is that for a quadratic function in $\strongconv \cap \ls$, one may have $a=2\sqrt{\mu L}/(\mu+L)$. Plugged into our Table bounds, this creates significantly poorer bounds compared with the known bounds, see Section~\ref{table:sc_sqc_rates}. This sub-optimality follows from the definition of the constants parameterizing the class, that follow from \textit{worst-case points}. Define $a(x) = \dotprod{\nabla f(x),x-x^\ast}/({\norm{\nabla f(x)}\norm{x-x^\ast}})$, and assume the worst-case point $\hat{x} = \arginf_{x \in \R^d} a(x)$ is such that $a(\hat x) > 0$. Then, we have that $f \in \ac$ for $a = a(\hat x)$. For the PL condition, the $\mu$ constant is also determined by such a worst-case point. However, the worst case points might not be the same for the two conditions; this is exactly what happens for a strongly convex quadratic function, where the worst aiming condition point is reached where the curvature is high. When an assumption blends the two information in a unique inequality, the associated worst case point balance between each condition. For such rigid functions such as quadratic, the different information (angular, curvature) are strongly coupled, such that a unique inequality can encompasses efficiently all the information. In a non-convex case these information may be decoupled. This may be seen with our toy example  in Section~\ref{sec:aiming_cond}, where we can fix $a=1$ while playing with the curvature through $f$. This indicates that such a decoupling can bring interesting insight for non-convex functions, compared to the more traditional $\sqc$ based on a unique inequality.

Anyway, we note that the aforementioned limitation is rather artificial. A first reason is that it can be corrected if allowing a more involved definition of the classes. Precisely, the aiming condition will be used in our convergence proofs through Lemma~\ref{lem:scalar_prod_bound}. Its proof remains valid for $a$ defined as  \[a := \frac{\dotprod{\nabla f(\hat x),\hat x-x^\ast}}{\norm{\hat x-x^\ast}\norm{\nabla f(\hat x)}},\]  where
 \[\hat{x} :=\argmin_{x \in \R^d }\frac{\dotprod{\nabla f(x),x-x^\ast}}{\norm{x-x^\ast}\norm{\nabla f(x)}} \cdot \frac{\norm{\nabla f(x)}}{\sqrt{f(x)-f^\ast}}.\]
 Namely, the aiming condition constant may be considered at a point that creates a balance with the PL condition. This allows to keep the information of the aiming condition, while not inducing an artificial sub-optimality. A second justification is provided in the next section.

\section{Relaxed Setting: Aiming Condition on Average}\label{sec:avg}
There is a gap between the convergence observed in practice and the theoretical bounds established for globally defined classes such as PL functions. 
The reason is intuitive: if the global landscape of non-convex functions can be highly irregular, nevertheless, the optimization path may remain in a favorable region. Importantly, the performance of first-order algorithms such as \eqref{eq:gd} and \eqref{eq:nm} only depends on the geometry surrounding their optimization path.

\noindent
\textbf{Naive Relaxation \: }A first direct idea to take this intuition into account is to consider an algorithm $\xdisc$ and a minimizer $\hat x$ such that $\tilde x_k \to_{k \to +\infty} \hat x$, and assuming
\begin{equation}\label{eq:naive_relax}
  \forall k\in \N,\dotprod{\nabla f(\tilde x_k),\tilde x_k-\hat x} \ge a \norm{\nabla f(\tilde x_k)}\norm{\tilde x_k-\hat x}.
\end{equation}
With words, $f$ is assumed to satisfy the $a$-aiming condition along the trajectory $\xdisc$, with respect to its convergence point $\hat x$. For a function in $\pl \cap \ls$, if $\xdisc \sim \eqref{eq:gd}$ or \eqref{eq:nm} verify \eqref{eq:naive_relax}, we would deduce for this sequence the same result as for Section~\ref{sec:cv_results}. 
The relaxation is twofold: (i) we get rid of the uniqueness minimizer assumption, and (ii) the convergence rate is not affected by potential pathological regions of $f$, as long as the sequence does not cross it. However, it suffices for \eqref{eq:naive_relax} to not be verified for a single iteration to make this reasoning fail. We thus derive a relaxation that, in the case where pathological regions remain a negligible section of the optimization path, still allows to keep accelerated results. 

\noindent
\textbf{Averaging Relaxation \: } We relax the viewpoint \eqref{eq:naive_relax}. We address the continuous case; the discrete case follows similarly, see Appendix~\ref{app:adptative_condition}.

\begin{assumption}\label{ass:avg_corelation}
    For some minimizer $x^\ast \in \R^d$, $\theta > 0$, $a_{avg} > 0$, and $\xcont$, we have
    \begin{align*}
        \int_0^t e^{\theta s}\Delta_sds
        \ge 0,
    \end{align*}
   $\Delta_s :=\dotprod{\nabla f(x_s),x_s-x^\ast}-a_{avg}\norm{\nabla f(x_s)}\norm{x_s-x^\ast}$.
\end{assumption}
A function $f$ satisfies Assumption~\ref{ass:avg_corelation} as long as the aiming condition holds along a specific average of the trajectory.
If $f \in \ac$, Assumption~\ref{ass:avg_corelation} holds with $a_{avg} = a$ for any 
initialization. However, we could have $a \ll a_{avg}$, or having that the path crosses some regions where the aiming condition is not verified, while still having $a_{avg} > 0$. 
\begin{theorem}\label{thm:gf_pl_&_avg_cor}
    Let $f \in \pl \cap \qg$, $\mu_0 := \sup\{\mu' \ge \mu: f \in \qginff{\mu'}\} $ and $a_{avg}>0$. If:
    
    (i) $\xcont \sim \eqref{eq:gf}$ for a given initialization $x_0 \in \R^d$ is such that Assumption~\ref{ass:avg_corelation} holds with $\theta = a_{avg}\sqrt{\mu \mu_0}$. , then
    $$ f(x_t)-f^\ast \le K_0e^{-a_{avg}\sqrt{\mu \mu_0}t}(f(x_0)-f^\ast). $$
    
    (ii) $\xcont \sim \eqref{eq:nmo}$ for $\eta' = a_{avg}\bpar{\mu_0/L_0}^{1/4}\sqrt{\mu}, \gamma' =  (\mu_0 L_0)^{-1/4},~\eta = (\mu_0 L_0)^{1/4}, ~ \gamma \ge 0$ and such that 
Assumption~\ref{ass:avg_corelation} holds with $\theta =  a_{avg}\bpar{\mu_0/L_0}^{1/4}\sqrt{\mu}$,  then
    $$f(x_t)-f^\ast \le K_0e^{- a_{avg}\bpar{\frac{\mu_0}{L_0}}^{\frac{1}{4}}\sqrt{\mu}} (f(x_0)-f^\ast).$$
    In both statements, $K_0 :=  \bpar{1+\sqrt\frac{L_0}{\mu_0}}.$
\end{theorem}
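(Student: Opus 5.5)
The plan is to rerun the Lyapunov arguments behind Theorem~\ref{thm:gf_pl_&_ac}~(ii) and Theorem~\ref{thm:pl_accel}~(i). The one change is that the pointwise aiming inequality is no longer available, so the cross term $\dotprod{\nabla f(x_s),x_s-x^\ast}$ will be kept inside a time integral weighted by $e^{\theta s}$. Assumption~\ref{ass:avg_corelation} is then invoked only at the end, on that integral.

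\textbf{Case (i), gradient flow.} Take the energy
\[E_t = f(x_t)-f^\ast + c\norm{x_t-x^\ast}^2,\]
with $c$ chosen as in the proof of Theorem~\ref{thm:gf_pl_&_ac}~(ii). A natural choice is $c=\sqrt{\mu\mu_0}/2$, so that $E_0\le (1+\sqrt{L_0/\mu_0})(f(x_0)-f^\ast)$ by $\qg$, and $E_t\ge f(x_t)-f^\ast$ trivially.

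The derivative is
\[\dot E_t = -\norm{\nabla f(x_t)}^2 - 2c\dotprod{\nabla f(x_t),x_t-x^\ast}.\]
Write $\dotprod{\nabla f(x_t),x_t-x^\ast} = \Delta_t + a_{avg}\norm{\nabla f(x_t)}\norm{x_t-x^\ast}$. The piece $a_{avg}\norm{\nabla f}\norm{x-x^\ast}$ is lower bounded exactly as in Steps~1--4 of Lemma~\ref{lem:scalar_prod_bound}, with $a$ replaced by $a_{avg}$: $\pl$ handles the gradient norm and $\qg$, $\qginff{\mu_0}$ handle the distance. This yields
\[\frac{d}{dt}\bigl(e^{\theta t}E_t\bigr) \le -2c\,e^{\theta t}\Delta_t, \qquad \theta = a_{avg}\sqrt{\mu\mu_0},\]
after the $-\norm{\nabla f}^2$ term and the PL bound absorb the leftover $\theta(f-f^\ast)$ part. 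Integrating from $0$ to $t$ and applying Assumption~\ref{ass:avg_corelation} gives $e^{\theta t}E_t\le E_0$, which is the claim.

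\textbf{Case (ii), Nesterov ODE.} Reuse the Lyapunov function from the proof of Theorem~\ref{thm:pl_accel}~(i):
\[E_t = f(x_t)-f^\ast + \tfrac{\eta'}{2\gamma'}\norm{z_t-x^\ast}^2,\]
possibly with the weights used there, and the parameters $\eta,\eta',\gamma'$ as stated. Differentiating along \eqref{eq:nmo}, the coupling terms between $x_t$ and $z_t$ cancel by the choice $\eta\gamma'=1$. What remains is a term $-\eta'\dotprod{\nabla f(x_t),x_t-x^\ast}$, plus terms controlled exactly as in the $\ac$ proof. That proof used $\ac$ only inside this scalar product, through Lemma~\ref{lem:scalar_prod_bound}.

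Split this scalar product as $\Delta_t + a_{avg}\norm{\nabla f}\norm{x-x^\ast}$ and bound the second part via Lemma~\ref{lem:scalar_prod_bound} with $a_{avg}$. This gives
\[\frac{d}{dt}\bigl(e^{\theta t}E_t\bigr)\le -\eta' e^{\theta t}\Delta_t, \qquad \theta=a_{avg}(\mu_0/L_0)^{1/4}\sqrt{\mu}.\]
Integrating and using the assumption gives the result with the same $K_0$, with $z_0=x_0$ and $\qg$ used to bound $E_0$. (The statement omits the factor $t$ in the exponent; it is presumably $e^{-\theta t}$.)

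\textbf{Main obstacle.} The difficulty is the bookkeeping. One must check that in the earlier proofs $\ac$ enters only linearly through the single term $\dotprod{\nabla f(x_t),x_t-x^\ast}$, with a constant (time-independent) coefficient. Only then can the deficit $\Delta_t$ be isolated as $-C e^{\theta t}\Delta_t$ with fixed $C>0$, so that the weighted-integral assumption applies directly. A step that used $\ac$ nonlinearly, or combined it multiplicatively with another bound, would break this linear isolation. In that case I would first rewrite that step so that the inequality from Lemma~\ref{lem:scalar_prod_bound} is applied once, to $a_{avg}\norm{\nabla f}\norm{x-x^\ast}$, before any other estimate.
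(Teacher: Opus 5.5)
\textbf{Where your proposal matches the paper.} Your overall architecture is the paper's. You reuse the Lyapunov functions of Theorems~\ref{thm:gf_pl_&_ac}(ii) and \ref{thm:pl_accel}(i). You keep $\dotprod{\nabla f(x_s),x_s-x^\ast}$ under the $e^{\theta s}$-weighted integral, apply Assumption~\ref{ass:avg_corelation} to it, and bound $\norm{\nabla f}\norm{x-x^\ast}$ from below by Lemma~\ref{lem:prod}. Isolating $\Delta_s$ pointwise before integrating is equivalent to the paper's order. Your linearity concern is met: the coefficient is the constant $\beta$ for \eqref{eq:gf} and $\eta$ for \eqref{eq:nmo}. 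You are also right that the exponent in (ii) should be $-\theta t$.

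\textbf{Gap in (i).} The explicit weight you commit to is wrong, and the step built on it fails. With $c=\sqrt{\mu\mu_0}/2$, the $\norm{x_t-x^\ast}^2$ terms do cancel after Lemma~\ref{lem:prod}. But the coefficient of $f(x_t)-f^\ast$ becomes $\theta-2ca_{avg}\sqrt{\mu/L_0}=\theta\bigl(1-\sqrt{\mu/L_0}\bigr)>0$. Absorbing it with $-\norm{\nabla f(x_t)}^2\le-2\mu(f(x_t)-f^\ast)$ requires $\theta(1-\sqrt{\mu/L_0})\le 2\mu$. That is essentially the regime where the claimed rate is no better than the plain PL rate $2\mu$. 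It fails as soon as $a_{avg}\sqrt{\mu_0/\mu}\,(1-\sqrt{\mu/L_0})>2$, for instance $a_{avg}=1$ and $\mu_0\gg\mu$, which is exactly the plateau case the theorem targets. The correct weight is $c=\beta/2$ with $\beta=\sqrt{\mu_0L_0}$, the value for which $\beta\sqrt{\mu/L_0}=\sqrt{\mu\mu_0}$. Then both coefficients vanish identically and $-\norm{\nabla f}^2$ is simply dropped. The bound on $E_0$ then comes from $\qginff{\mu_0}$: $\frac{\beta}{2}\norm{x_0-x^\ast}^2\le\frac{\beta}{\mu_0}(f(x_0)-f^\ast)=\sqrt{L_0/\mu_0}\,(f(x_0)-f^\ast)$. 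It does not come from $\qg$, which bounds the distance from the wrong side.

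\textbf{Gap in (ii).} Differentiating $f(x_t)-f^\ast+\frac{\beta}{2}\norm{z_t-x^\ast}^2$ along \eqref{eq:nmo} produces the mixed term $(\eta-\beta\gamma')\dotprod{\nabla f(x_t),z_t-x^\ast}$. So the cancellation condition is $\beta=\eta/\gamma'$; the identity $\eta\gamma'=1$ alone is not the condition. Your weight $\beta=\eta'/\gamma'=a_{avg}\sqrt{\mu\mu_0}$ leaves $(\eta-\eta')\dotprod{\nabla f(x_t),z_t-x^\ast}$. This term is sign-indefinite and generically nonzero, since $\eta'/\eta=a_{avg}\sqrt{\mu/L_0}$.

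Moreover, the surviving aiming term carries the coefficient $\eta$, because it comes from $\eta\dotprod{\nabla f(x_t),z_t-x_t}$; it does not carry $\eta'$. With coefficient $\eta'=\theta$, the $f-f^\ast$ term would be left with $\theta(1-a_{avg}\sqrt{\mu/L_0})>0$.

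The fix is $\beta=\eta/\gamma'=\eta^2=\sqrt{\mu_0L_0}$, together with $\eta'=\theta$ to kill the $\norm{z_t-x^\ast}^2$ term. With the stated parameters, both $\theta-\eta a_{avg}\sqrt{\mu/L_0}$ and $\beta\theta-\eta a_{avg}\sqrt{\mu\mu_0}$ then vanish. This gives $\frac{d}{dt}\bigl(e^{\theta t}E_t\bigr)\le-\eta\,e^{\theta t}\Delta_t$, and your integration argument goes through. Your hedge ``with the weights used there'' points to exactly these values, but the formulas you actually wrote down are not them.
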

The proof is in Appendix~\ref{app:gf_pl_&_avg_cor}. We obtain the same convergence rate as for Theorem~\ref{thm:gf_pl_&_ac} (ii) and Theorem~\ref{thm:pl_accel} (i), replacing $a$ with $a_{avg}$. However, the rates of Theorem~\ref{thm:gf_pl_&_avg_cor} could be either significantly sharper, or allow for obtaining such a rate while the algorithm crosses regions with negative aiming condition values. This reasoning can be extended to other geometrical conditions, such as $\pl$. This perspective might explain why, in practice, grid-search selection of parameters often leads to relatively large constants, compared to what would be allowed by geometrical constants that parameterize globally the associated landscape.

\section{Numerical Experiment}\label{sec:numerical}
We provide numerical experiments to support our theoretical findings. We first design a simple $2$d example that belongs to $\pl \cap \ls$, has a unique minimizer, but does not belong to $\ac$. This will result in momentum hurting the convergence speed in the early stage of optimization, see Section~\ref{sec:numerical_1}. Then, in Section~\ref{app:heuristic}, we provide a heuristic validation of the bound to have acceleration with \eqref{eq:nm} provided in Section~\ref{sec:mom_cv}.


\subsection{An Example with Negative Aiming Condition}\label{sec:numerical_1}
In this section, we design an example that shows how negative aiming condition values can hurt the convergence of momentum algorithms, and then discuss the aiming condition in neural networks. For our experiment, we consider an alternative formulation of \eqref{eq:nm} that we call \eqref{eq:nm_prime}.
\begin{equation}\label{eq:nm_prime}\tag{NM'}
    \begin{array}{c}
\left\{
\begin{aligned}
&\tilde y_k = \tilde x_k + \alpha(\tilde x_k -\tilde x_{k-1})  \\
&\tilde x_{k+1} =\tilde y_k - \gamma\, \nabla f(\tilde y_k)
\end{aligned}
\right.
\end{array}
\end{equation}
While \eqref{eq:nm} with the continuized parameterization is convenient for theoretical analysis, \eqref{eq:nm_prime} is more amenable to practical implementation, as it involves only two easily tunable parameters with clear interpretations: the stepsize $s$ and the momentum parameter $\alpha$. See \cite{defazio2019curved} for a discussion on the different forms of Nesterov momentum.

Consider the 2-dimensional function\[F_{\varepsilon}(x,y) = 0.5(y-\sin(x))^2 + 0.5\varepsilon x^2.\]
The special case $\varepsilon = 0$ is introduced in \cite{apidopoulos2022convergence}. 
For $\varepsilon > 0$, $F_{\varepsilon}$ has $(0,0)$ as unique minimizer, and it belongs to $\pl$ with $\mu \le \frac{2+\varepsilon
-\sqrt{\varepsilon^2+4}}{2}$, see Lemma~\ref{lem:toy_example_pl}. We display $F_{0.001}$ in Figure~\ref{fig:toy_prob}. We also provide a three-dimensional visualization in the appendix, see Figure~\ref{fig:3d_vizualization}. 
At certain points of the landscape, the descent direction is anti-correlated with the direction pointing toward the minimizer, thereby violating the aiming condition.
\begin{figure}[h]
    \centering
    \begin{minipage}{0.65\linewidth}
        \centering
        \begin{minipage}{0.48\linewidth}
            \centering
            \includegraphics[width=\linewidth]{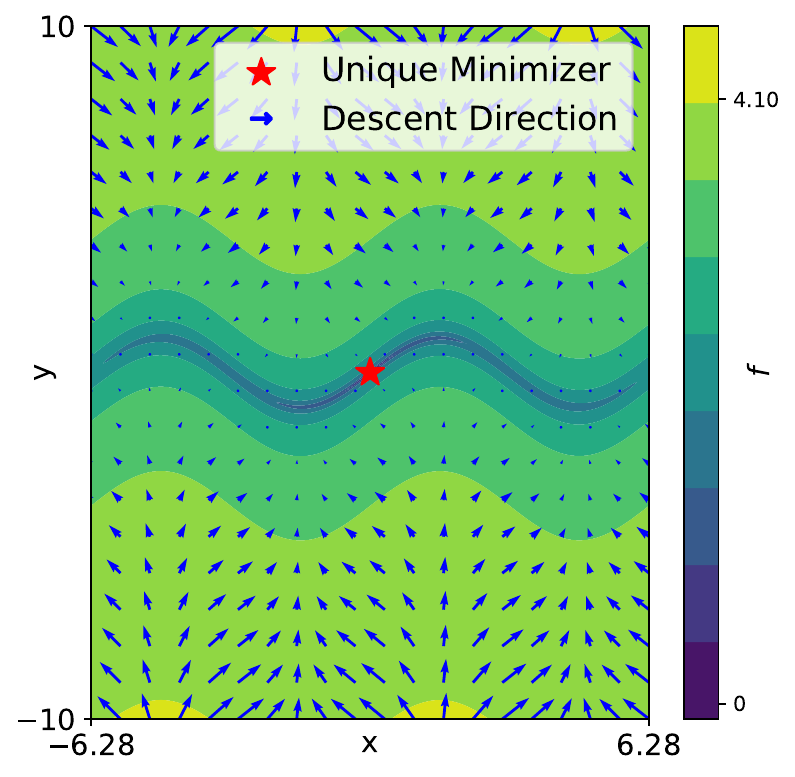}
        \end{minipage}
        \begin{minipage}{0.48\linewidth}
            \centering
            \includegraphics[width=\linewidth]{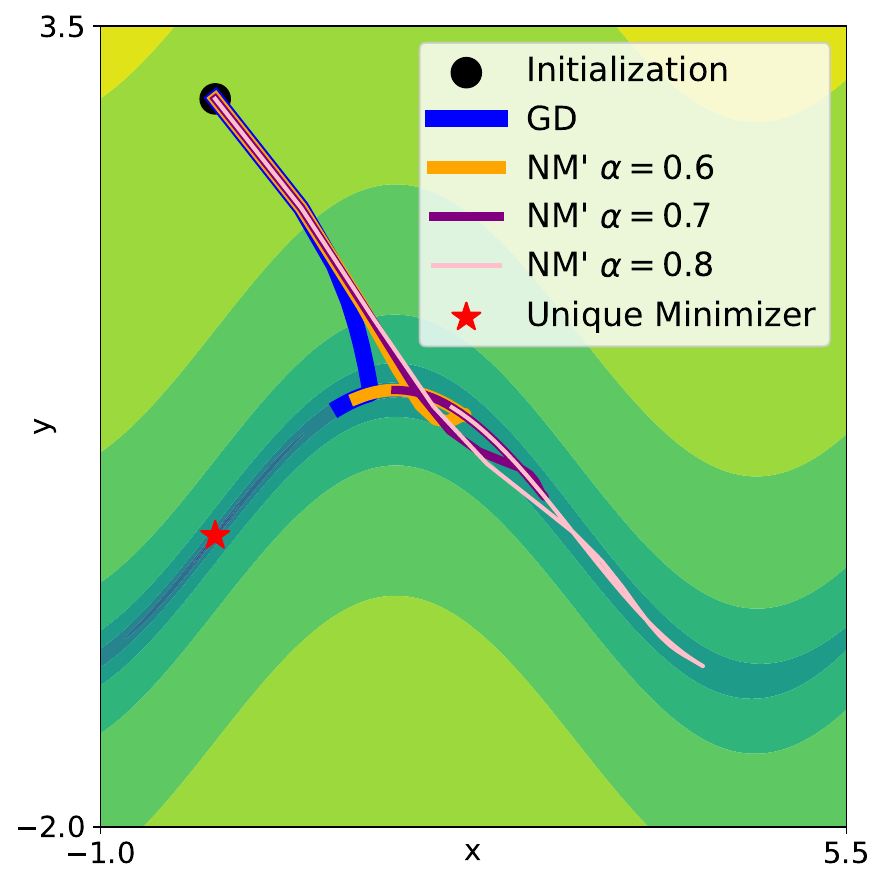}
        \end{minipage}
    \end{minipage}
    \begin{minipage}{0.3\linewidth}
        \centering
        \includegraphics[width=\linewidth]{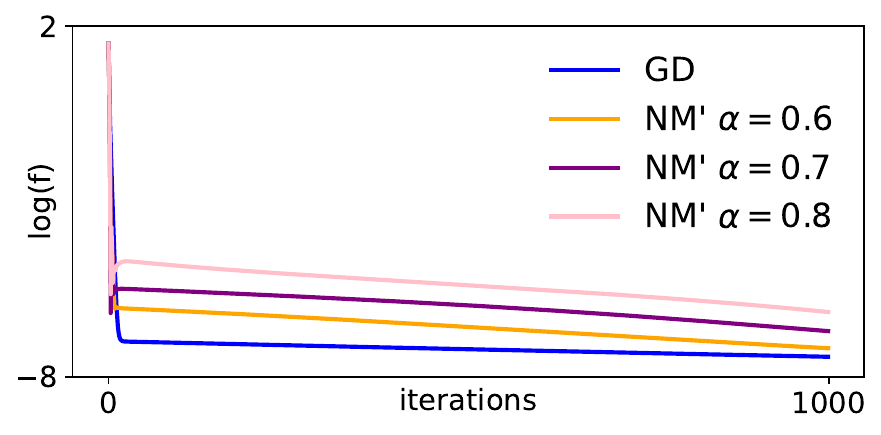}

        \vspace{0.3cm}

        \includegraphics[width=\linewidth]{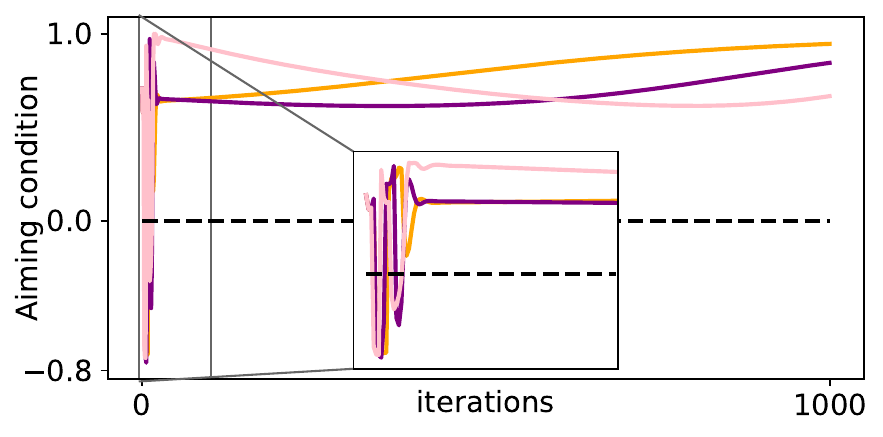}
    \end{minipage}

  \caption{ Left: Heatmap of $F_{0.001}(x,y) = 0.5(y-\sin(x))^2 + 0.001\cdot0.5x^2$. Blue arrows indicate the descent direction $-\nabla F_{0.001}$. Center: First $1000$ iterations of the trajectories of \eqref{eq:gd} and \eqref{eq:nm_prime}, both starting from the initialization point $(0,3)$, for different values of momentum parameter $\alpha$.  Top right: Corresponding decrease of $\log(f)$. Bottom right: Values of the aiming condition along the iterates, zoomed on the first 100 iterations. Early negative aiming condition values cause momentum to drive the trajectory away from the minimizer, leading \eqref{eq:gd} to outperform momentum in early iterations, with increasing effect for larger $\alpha$.}
    \label{fig:toy_prob}
\end{figure}

\noindent
\textbf{Negative aiming condition shoots \eqref{eq:nm_prime} away \:} In Figure~\ref{fig:toy_prob}, we compare \eqref{eq:gd} and \eqref{eq:nm_prime} for different values of momentum parameter $\alpha$, see details in Appendix~\ref{app:toy_prob}. Due to negative aiming condition values during the early phase of the optimization process, iterates of \eqref{eq:nm_prime} are initially driven away from the minimizer. Increasing the momentum parameter amplifies this effect, causing the trajectory to deviate even further. 
As a result, \eqref{eq:gd} outperforms \eqref{eq:nm_prime} for up to $1000$ iterations in terms of function value decrease. This example illustrates how negative aiming condition values can hinder momentum methods. 
More broadly, designing functions with unfavorable aiming behavior while retaining desirable optimization properties—such as satisfying the PL condition and having a unique minimizer—appears to be an interesting direction for better understanding landscapes on which momentum is ineffective.

\paragraph{\eqref{eq:nm_prime} eventually catches up due to favorable averaged aiming condition \:}
Figure~\ref{fig:toy_prob} shows that, although the aiming condition takes negative values at early iterations, it then stabilizes around a positive value close to one. As discussed in Section~\ref{sec:avg}, this suggests that \eqref{eq:nm_prime} should eventually benefit from momentum for a large enough number of iterations. We confirm this behavior in Figure~\ref{fig:convergence_longer} in the appendix.

\paragraph{Aiming Condition and Neural Networks}

Because of the curse of dimensionality and their high level of non-convexity, we could expect that large-scale models such as those involved in deep learning do not verify any aiming condition. This intuition is challenged by the fact that well-designed neural networks are known to have favorable optimization properties. It was observed in \cite{li2018visualizing} that around 90$\%$ of the variations of the path of SGD with large batch size lie in a dimension 2 space, despite the high dimensionality of the models. Of special interest to our work, \cite{guille2024no} performs an extensive empirical evaluation of the aiming condition, which they name the \textit{cosine similarity}. They observe that for various tasks and architectures, $a$ remains between $10^{-1}$ and $10^{-3}$, despite the high dimensionality of the model. This observation was consistent across several stochastic first-order algorithms, including SGD with momentum. Although our theoretical results do not apply to general deep learning models, these experiments tend to highlight the crucial role of the aiming condition as a key ingredient in explaining the effectiveness of momentum.
\subsection{Heuristic Evaluation of the Acceleration Bound}\label{app:heuristic}
In this section, we provide a heuristic validation of the bound on the aiming condition to provide acceleration, derived in Section~\ref{sec:mom_cv}. To do so, we consider the $\pl$ function introduced in \cite{PLlowerbound}, which serves to obtain a lower bound on the number of gradient call needed by first order algorithms--algorithms that make use of gradient information--to obtain a point $\hat x \in \R^d$ such that $f(\hat x)-\min f \le \varepsilon$. Precisely, it is shown that the number of needed gradient calls scales at best as $\bigO(\frac{L}{\mu}\log(\frac{1}{\varepsilon}))$ for any first-order algorithms. This bound corresponds to a $\mu/L$ convergence rate, inducing that, up to constants, the convergence rate of \eqref{eq:gd} is optimal, see Table~\ref{table:sc_sqc_rates}. The definition of the hard instance is rather involved see details in Appendix~\ref{app:hard_fonc_sqc}. 

\textbf{Non acceleration of momentum on the hard function \:}
We observe empirically that, as expected, momentum does not allow improvement over gradient descent when minimizing the hard function of \cite{PLlowerbound}. We consider \eqref{eq:gd} and \eqref{eq:nm} with the \hyperlink{cont_param}{continuized parameterization}. We chose a parameterization of the hard function such that it belongs to $\pl \cap \ls$ with $\mu=10^{-4}$, $L = 10^3$, and the dimension is $4550$ (see details in Appendix~\ref{app:hard_fonc_sqc}). For both \eqref{eq:gd} and \eqref{eq:nm}, we tune the parameters such that the decrease at iteration $1000$ is maximized. We plot the decrease of function values of the hard function in Figure~\ref{fig:hard_instance}. We observe that tuning of \eqref{eq:nm} only permits to catch up with the decrease of \eqref{eq:gd}, but provides no acceleration. 

\textbf{Heuristic test of the acceleration bound \:}
In Section~\ref{sec:mom_cv}, we stated that for $f \in \pl \cap \ac \cap \ls$, momentum provide acceleration as long as 
\[a \ge \bpar{\frac{L_0}{\mu_0}}^{1/4}\sqrt{\frac{\mu}{L}}.\]
It is natural to test whether this bound is verified or not on the aforementioned hard function. Importantly, the following discussion should be regarded as a \textbf{heuristic}, rather than a rigorous derivation.

For our test, we do not consider the global parameters $\mu, a ,L$ such that $f$ belongs to $\pl \cap \ac \cap \ls$. This is because (i) we do not know the optimal parameters $a, \mu_0$ and $L_0$, and (ii) in the lower bound provided in \cite{PLlowerbound}, there is a hidden constant with values $\approx 10^6$, which is not negligible. Moreover, the decrease observed in Figure~\ref{fig:hard_instance} is significantly faster than what is predicted by the theoretical bound derived under the global parameters, such that it actually poorly describes the algorithm's behavior. This motivates to use a perspective that has a similar spirit to the one described in Section~\ref{sec:avg}, such that we will consider the values of PL and aiming condition taken along the optimization path. Along the iterates of $\xdisc$ generated by \eqref{eq:gd} or \eqref{eq:nm}, we compute the point-wise PL and aiming condition values, namely
\[\mu(\tilde x_i) :=  2\frac{\norm{\nabla f(\tilde x_i)}^2}{f(\tilde x_i)-f^\ast}, \quad a(\tilde x_i) := \frac{\dotprod{\nabla f(\tilde x_i),\tilde x_i-x^\ast}}{\norm{\nabla f(\tilde x_i)}\norm{\tilde x_i-x^\ast}}.\] Empirically, we observe that $\mu(\tilde x_i) \approx 1$ (see Figure~\ref{fig:mu_corr_values}), while the function is $\pl$ with $\mu = 10^{-4}$. This suggests that the algorithm stays in favorable regions compared to those existing on the global landscape. As a result, in the case of \eqref{eq:gd}, compared to the rate $\mu/L$, we expect that its convergence rate is more accurately described by 
\[\tilde \mu_{gd}/L,\]

with $\tilde \mu_{gd} = \frac{1}{N}\sum_{i=1}^N \mu(\tilde x_i)$ is the averaged point-wise PL values along the $N$ iterations of the algorithm. Moreover, because we observed that \eqref{eq:gd} achieves a better decrease with stepsize $\gamma_{gd} \ge 1/L$, running the algorithm with this stepsize suggests that an even more accurate rate is
\begin{equation}\label{cvmodel_bound_gd}
    \tilde \mu_{gd}\gamma_{gd}.
\end{equation}
We plot the curve associated to this theoretical rate in Figure~\ref{fig:hard_instance}. We observe that, if not matching perfectly the decrease of \eqref{eq:gd}, it is significantly more accurate compared with the rate $\mu/L$.
Following the same reasoning, we will model the rate for \eqref{eq:nm} as (see Theorem~\ref{thm:pl_accel} (ii))
\begin{equation}\label{cvmodel_bound_nm}
    \tilde a (\tilde \mu_0/ \tilde L_0)^{1/4} \sqrt{\tilde \mu_{nm}\gamma_{nm}},
\end{equation}

where $\tilde a = \frac{1}{N}\sum_{i=1}^N a(\tilde x_i)$ and $\tilde \mu_{nm} = \frac{1}{N}\sum_{i=1}^N \mu(\tilde x_i)$ are respectively the point-wise aiming condition value and point-wise PL values along the algorithm, and $\tilde \mu_0$ and $\tilde L_0$ are estimation of the optimal constant involved in $\qginf$ and $\qg$, and $\gamma_{nm}\ge L$ is the used stepsize for \eqref{eq:nm} that arises from the tuning of parameters. The parameters used in \eqref{eq:nm} result from grid-search rather than the theoretical recommendation of Theorem~\ref{thm:pl_accel}, and we observe that \eqref{eq:nm} has a slightly better decreasing rate compared to the bound \eqref{cvmodel_bound_nm}.
Up to associating the empirical decreasing rate of \eqref{eq:gd} and \eqref{eq:nm} with the bounds \eqref{cvmodel_bound_gd} and \eqref{cvmodel_bound_nm}, we thus test the acceleration condition of Section~\ref{sec:mom_cv} by comparing $\tilde a $ with

\[\frac{ \tilde \mu_{gd}\gamma_{gd} }{\sqrt{\gamma_{nm}\tilde \mu_{nm}}}  \bpar{\frac{\tilde L_0}{\tilde \mu_0}}^{\frac{1}{4}}.\]
We find numerically that $\tilde a \approx 0.16$, while $ \frac{ \tilde \mu_{gd}\gamma_{gd} }{\sqrt{\gamma_{nm}\tilde \mu_{nm}}}  \bpar{\frac{\tilde L_0}{\tilde \mu_0}}^{\frac{1}{4}} \approx 0.19$ such that they are roughly the same. So, in this setting where \eqref{eq:nm} does not accelerate over \eqref{eq:gd}, we observe that the acceleration condition of Section~\ref{sec:mom_cv} is indeed not verified. This gives heuristic weights to the relevance of this bound. Finally, this suggests that the function that serves as lower bound for PL functions, which actually belongs to $\pl \cap \ac \cap \ls$, is such that the aiming condition values do not scale properly with other geometrical constants to permit acceleration using momentum.
\begin{figure}
    \centering
    \includegraphics[width=0.8\linewidth]{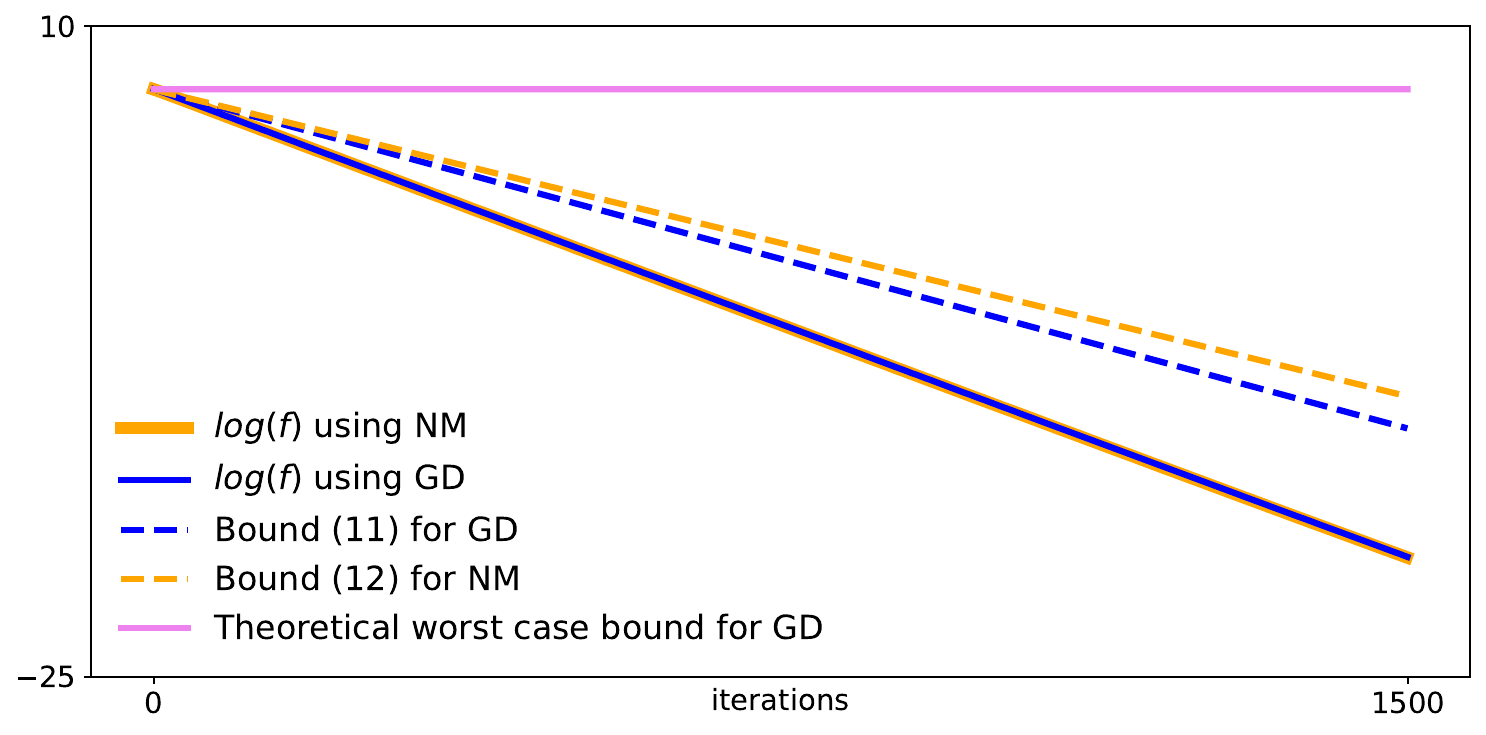}
    \caption{Function values decrease when running \eqref{eq:gd} and \eqref{eq:nm} on the hard instance of \cite{PLlowerbound}. We also plot the log of the theoretical convergence upper bound $\log((1-\mu \gamma)^k(f(x_0)-f^\ast)$ with $\mu$ the global PL constant, and the log of the theoretical convergence upper bound using parameter values across the path, see \eqref{cvmodel_bound_gd} and \eqref{cvmodel_bound_nm} in Section~\ref{app:heuristic}. It can be observed that \eqref{eq:gd} and \eqref{eq:nm} decrease at same speed, and that the bounds proposed at \eqref{cvmodel_bound_gd} and \eqref{cvmodel_bound_nm} are significantly more accurate compared with the global worst case bound.}
    \label{fig:hard_instance}
\end{figure}

\begin{figure}[t!]
        \centering
  \includegraphics[width=0.45\linewidth]{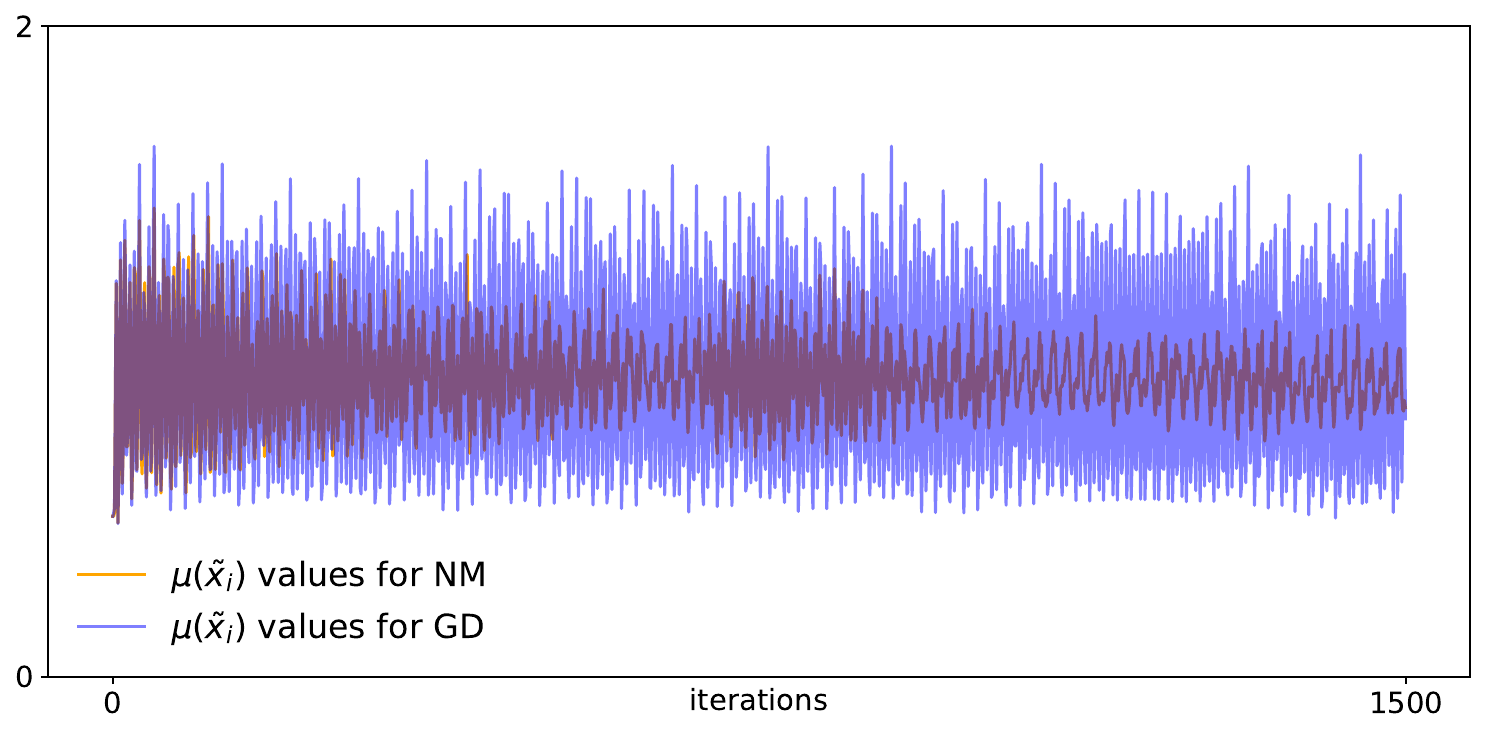}
    \includegraphics[width=0.45\linewidth]{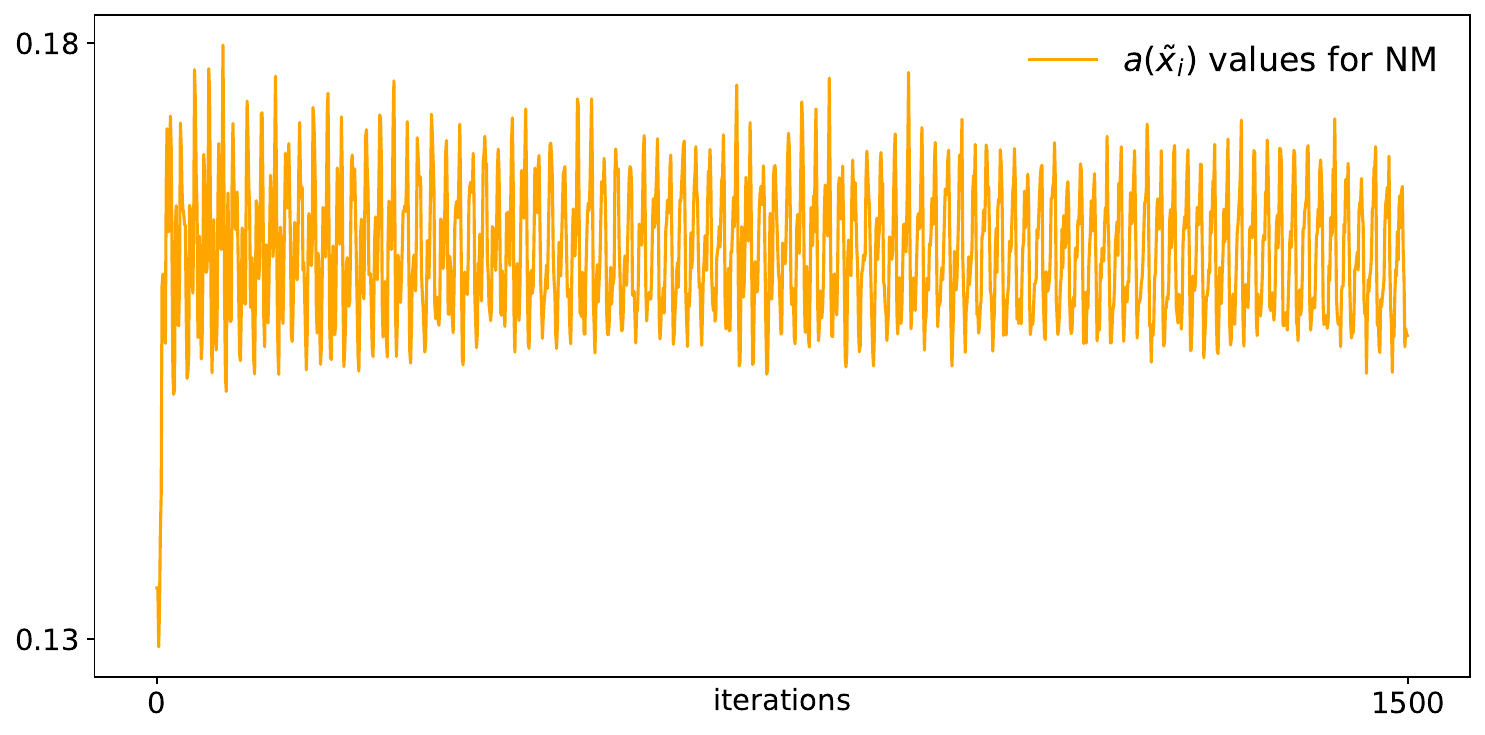}

    \caption{Left: Values of $\mu(\tilde x_i)$ along the iterates of \eqref{eq:gd} and \eqref{eq:nm}. Values of $a(\tilde x_i)$ along the iterates of \eqref{eq:nm}. The underlying function is the hard instance of \cite{PLlowerbound}, see details in Appendix~\ref{app:hard_fonc_sqc}.} 
    \label{fig:mu_corr_values}
\end{figure}
\section*{Reproducibility Statement}
Our figures and experiments are reproducible, see our code \url{https://github.com/J-Hermant/Nesterov_Acceleration_Aiming_Condition/tree/main}.
\section*{Acknowledgment}
This work is supported by PEPR PDE-AI and the ANR SOS2ID (grant ANR-24-CE40-3786). We thank Jean-François Aujol, Erell Gachon and Marien Renaud for their feedback, which helped improve the presentation and clarity of this work. 
\bibliographystyle{plain} 
\bibliography{bib}

\newpage
\appendix
\onecolumn
\section{Related Works and Background Details}
In this section, we first provide bibliographical details about Table~\ref{table:sc_sqc_rates} (Appendix~\ref{app:related_work_tab}), then give more details about a specific related work in Appendix~\ref{app:related:uaac}.

\subsection{Details for Table~\ref{table:sc_sqc_rates}}\label{app:related_work_tab}
In this section, we detail Table~\ref{table:sc_sqc_rates}, giving in particular references for the results. We note that some of the results do not exist in the literature. Before, we state the following result that we will use in this section.
\begin{proposition}\label{prop:equiv_hb_nmo}
    Let $\xz$ verify \eqref{eq:nmo}. Then, it verifies the Heavy Ball equation with Hessian damping
    \begin{equation}
    \ddot{x}_t
    + \bpar{\eta_t - \frac{\dot{\eta}_t}{\eta_t} + \eta'_t }\dot{x}_t
    + \gamma \nabla^2 f(x_t)\dot{x}_t
    + \bpar{ \eta'_t \gamma + \eta_t \gamma' -\frac{\dot{\eta}_t}{\eta_t}\gamma }\nabla f(x_t)
    = 0.
\end{equation}
\end{proposition}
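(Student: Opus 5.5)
Differentiate the first equation of \eqref{eq:nmo} and eliminate $z_t$ using the first equation itself. Since $f$ is twice differentiable along the trajectory (we assume $f\in C^2$ and $\eta_t$ differentiable and nonvanishing), we have $\frac{d}{dt}\nabla f(x_t)=\nabla^2 f(x_t)\dot x_t$.

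\textbf{Step 1.} Differentiating the first line gives
\[
\ddot x_t=\dot\eta_t(z_t-x_t)+\eta_t(\dot z_t-\dot x_t)-\gamma\nabla^2 f(x_t)\dot x_t .
\]

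\textbf{Step 2.} Substitute the second line of \eqref{eq:nmo}, $\dot z_t=\eta'_t(x_t-z_t)-\gamma'\nabla f(x_t)$. This yields
\[
\ddot x_t=(\dot\eta_t-\eta_t\eta'_t)(z_t-x_t)-\eta_t\dot x_t-\eta_t\gamma'\nabla f(x_t)-\gamma\nabla^2 f(x_t)\dot x_t .
\]

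\textbf{Step 3.} The only remaining occurrence of $z_t$ is through $z_t-x_t$. The first line of \eqref{eq:nmo} gives
\[
z_t-x_t=\frac{\dot x_t+\gamma\nabla f(x_t)}{\eta_t}.
\]
Plugging this in, the coefficient of $\dot x_t$ becomes $\frac{\dot\eta_t}{\eta_t}-\eta'_t-\eta_t$. The coefficient of $\nabla f(x_t)$ becomes $\frac{\dot\eta_t}{\eta_t}\gamma-\eta'_t\gamma-\eta_t\gamma'$.

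\textbf{Step 4.} Moving every term to the left-hand side gives exactly
\[
\ddot x_t+\Bigl(\eta_t-\frac{\dot\eta_t}{\eta_t}+\eta'_t\Bigr)\dot x_t+\gamma\nabla^2 f(x_t)\dot x_t+\Bigl(\eta'_t\gamma+\eta_t\gamma'-\frac{\dot\eta_t}{\eta_t}\gamma\Bigr)\nabla f(x_t)=0,
\]
which is the claimed Heavy Ball equation with Hessian damping.

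The computation is routine. The only points needing care are the regularity hypotheses (enough smoothness of $f$ for the chain rule, and $\eta_t\neq 0$ so the division in Step 3 is valid) and bookkeeping the signs when substituting $z_t-x_t$.
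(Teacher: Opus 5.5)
Your computation is correct and is essentially the paper's argument: both eliminate $z_t$ using $z_t-x_t=(\dot x_t+\gamma\nabla f(x_t))/\eta_t$, taken from the first line of \eqref{eq:nmo}. The only difference is the order of operations. The paper first solves for $z_t$, differentiates that expression, substitutes into the second line and multiplies by $\eta_t$, whereas you differentiate the $x$-equation directly, which avoids differentiating $1/\eta_t$ and gives the same coefficients.
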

Proposition~\ref{prop:equiv_hb_nmo} states that \eqref{eq:nmo} can be written as the Heavy Ball equation with Hessian damping \cite{alvarez2002second}. We will use results established for this equation in the case of \eqref{eq:nmo}.

\textbf{About $\strongconv$ \:} The rate for \eqref{eq:gf} can be deduced from the one in the $\pl$ case. The rate stated for \eqref{eq:nmo} can be found in the case of heavy ball with Hessian damping in \cite{shi2018understanding}, from which we deduce the same rate for \eqref{eq:nmo}.  The rates for \eqref{eq:gd} and \eqref{eq:nm} can be found in \cite{nesterovbook}.

\textbf{About $\sqc$ \:} See Proposition~\ref{prop:gf_sqc} for the rate of \eqref{eq:gf}. The rate of \eqref{eq:gd} can be found in \cite[Proposition 3]{hermant2024study}. The rate stated for \eqref{eq:nmo} can be found in the case of heavy ball with Hessian damping in \cite[Theorem 5]{hermant2024study}, from which we deduce the same rate for \eqref{eq:nmo}. The rate for \eqref{eq:nm} requires some care. It can be found in \cite[Theorem 1]{hinder2020near}, but involves a line-search procedure that requires computing evaluations of $f$ and $\nabla f$ at each iteration to compute one of the parameters. It can also be found in \cite[Theorem 2]{hermant2024study} up to a supplementary assumption of weak-convexity, namely $f + \rho/2\norm{\cdot}^2$ is convex for some $\rho \in \R$, with $\rho \le L$. Finally, it can be found in \cite[Theorem 2]{wang2023continuized} up to non-deterministic results, because of the use of the continuized parameterization.

\textbf{About $\pl$ \:} The rate $\mu$ for \eqref{eq:gf} can be found in \cite[Theorem 3]{polyak2017lyapunov}. The rate $\mu/L$ for \eqref{eq:gd} has been known for a long time \cite{Polyak1963,bolte2017error}, see \cite[Theorem 1]{karimi2016linear} for a direct proof. The rate for \eqref{eq:nm} can be found in \cite[Proposition 7]{hermant2025continuized}, but holds with high probability because of the use of the continuized framework. We note that using the Heavy Ball algorithm \eqref{eq:hb}, this rate can be improved to a rate close to $\sqrt{\mu/L}$ if assuming that $f$ is four times differentiable, and if the algorithm is initialized close enough to the minimizer \cite{kassing2024polyak}. The case of \eqref{eq:nmo} is more complicated. For Polyak's momentum (Heavy Ball equation), a result exists assuming $f$ is also in $\ls$, yielding a convergence rate $\approx \mu/\sqrt{L}$ \cite{apidopoulos2022convergence}. For the same equation, the rate $\sqrt{\mu}$ holds asymptotically, provided that $f$ is four times differentiable, providing strong local regularity around minimizers \cite{kassing2024polyak}. In the case of Heavy Ball with Hessian damping, a rate $\sqrt{\mu}$ is deduced \cite{apidopoulos2025heavy}, but the gradient term in the equation is rescaled by $\bigO(1/\sqrt{\mu})$, which means we are minimizing a rescaled version of the function. In Proposition~\ref{prop:nmo_pl}, we actually show without assuming $f \in \ls$ or smoother than $C^1$--and with a considerably simpler proof than the previously mentioned works--that \eqref{eq:nmo} achieves a rate $\gamma \mu$, where $\gamma$ is a parameter of \eqref{eq:nmo}. It highlights that we can achieve whatever rate we want, as long as we take $\gamma$ big enough. However, in the discrete case, $\gamma$ corresponding intuitively to the stepsize, it appears clearly that the descent lemma induced by $f \in \ls$ will impose a bound on $\gamma$, namely $\gamma \le 1/L$, yielding a rate $\mu/L$.

\subsubsection{Proof of not existing rates of Table~\ref{table:sc_sqc_rates}}\label{app:table_proof}
\begin{proposition}\label{prop:gf_sqc}
Let $f\in\sqc$, $\xcont\sim~\eqref{eq:gf}$. Then,
\[
f(x_t)-f^\ast = \bigO(e^{-\tau \mu t}).
\]
\end{proposition}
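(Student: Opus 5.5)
We prove the statement with a Lyapunov argument on the function-value gap $E_t := f(x_t)-f^\ast$. Along \eqref{eq:gf} we have $\frac{d}{dt}E_t = -\norm{\nabla f(x_t)}^2$. The goal is to show that $E_t$ plus a multiple of $\norm{x_t-x^\ast}^2$ decays at rate $\tau\mu$.

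\textbf{Step 1: the Lyapunov functional.} Set $\mathcal{L}_t := f(x_t)-f^\ast + \frac{\lambda}{2}\norm{x_t-x^\ast}^2$ for a constant $\lambda\ge 0$ chosen below. Then
\[\frac{d}{dt}\mathcal{L}_t = -\norm{\nabla f(x_t)}^2 + \lambda\dotprod{\nabla f(x_t),x^\ast-x_t}.\]
The definition of $\sqc$ gives
\[\dotprod{\nabla f(x),x^\ast-x}\le -\tau\bpar{f(x)-f^\ast} - \frac{\tau\mu}{2}\norm{x-x^\ast}^2.\]
Plugging this in and dropping $-\norm{\nabla f(x_t)}^2\le 0$ yields
\[\frac{d}{dt}\mathcal{L}_t\le -\lambda\tau\bpar{f(x_t)-f^\ast} - \frac{\lambda\tau\mu}{2}\norm{x_t-x^\ast}^2.\]
This is bounded by $-c\,\mathcal{L}_t$ with $c=\min(\lambda\tau,\tau\mu)$.

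\textbf{Step 2: choosing $\lambda$ and concluding.} Choose $\lambda=\mu$, so $c=\tau\mu$. Grönwall's lemma then gives $\mathcal{L}_t\le \mathcal{L}_0 e^{-\tau\mu t}$. Since $f(x_t)-f^\ast\le\mathcal{L}_t$, we obtain
\[f(x_t)-f^\ast\le \bpar{f(x_0)-f^\ast+\frac{\mu}{2}\norm{x_0-x^\ast}^2}e^{-\tau\mu t} = \bigO(e^{-\tau\mu t}).\]
If desired, the inclusion $\sqc\subset\qginff{\mu\tau^2}$ from \eqref{eq:inclusions} bounds the initial distance term by the initial function gap. This is not needed for the $\bigO$ statement.

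\textbf{Main obstacle.} The only delicate step is balancing the two terms to get the $\min$ in Step 1. With $\lambda=\mu$, both coefficients equal $\tau\mu$ exactly, so no loss occurs. A purely function-value approach would go through $\plf{\mu\tau^2}$ and give only the rate $2\mu\tau^2$, which is worse than $\tau\mu$ for small $\tau$.
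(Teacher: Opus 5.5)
Your proposal is correct and follows essentially the same proof as the paper. The paper uses the same Lyapunov function $f(x_t)-f^\ast+\frac{\mu}{2}\norm{x_t-x^\ast}^2$, drops $-\norm{\nabla f(x_t)}^2$, and applies the $\sqc$ inequality to obtain $\dot E_t\le -\tau\mu E_t$ before integrating.
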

\begin{proof}
Let
\[
E_t =f(x_t)-f^\ast + \frac{\mu}{2}\norm{x_t-x^\ast}^2.
\]
Then
\begin{align*}
\dot E_t
&= \dotprod{\grad{x_t},\dot x_t} + \mu \dotprod{x_t-x^\ast,\dot x_t}\\
&\overset{(i)}= -\norm{\grad{x_t}}^2 - \mu\dotprod{\grad{x_t},x_t-x^\ast}\\
&\le - \mu\dotprod{\grad{x_t},x_t-x^\ast}\\
&\overset{(ii)}\le -\mu \tau\Bigl(f(x_t)-f^\ast + \frac{\mu}{2}\norm{x_t-x^\ast}^2\Bigr)
= -\mu\tau E_t,
\end{align*}
where (i) follows from \eqref{eq:gf} and (ii) from $f\in\sqc$.
Integrating yields $E_t\le e^{-\mu\tau t}E_0$, hence $f(x_t)-f^\ast\le E_t=\bigO(e^{-\mu\tau t})$.
\end{proof}

\begin{proposition}\label{prop:nmo_pl}
Let $f \in \pl$. Let $\xcont \sim$ \eqref{eq:nmo} with $\eta_t\equiv \eta$ and $\eta'_t\equiv \eta'$ constants and under the constraint $\gamma = (\eta+\eta')/\mu$. Then, we have
\[
f(x_t)-f^\ast = \bigO(e^{-2\mu \gamma t}).
\]
\end{proposition}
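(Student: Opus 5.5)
The plan is a Lyapunov argument in the spirit of the proof of Proposition~\ref{prop:gf_sqc}. The energy couples the function gap with the discrepancy $w_t := z_t - x_t$ between the two components of \eqref{eq:nmo}. The constraint $\gamma=(\eta+\eta')/\mu$ should be exactly what makes the two parts of the energy decay at the same rate $2\mu\gamma$.

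\textbf{Step 1: dynamics of $w_t$ and of $f$.} With constant $\eta,\eta'$, subtracting the two lines of \eqref{eq:nmo} gives
\[
\dot w_t = -(\eta+\eta')\,w_t - (\gamma'-\gamma)\nabla f(x_t),
\qquad
\tfrac{d}{dt}\bigl(f(x_t)-f^\ast\bigr) = \eta\dotprod{\nabla f(x_t),w_t} - \gamma\norm{\nabla f(x_t)}^2 .
\]

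\textbf{Step 2: energy and cancellation of the cross term.} For $\lambda>0$ to be chosen, set
\[
E_t := f(x_t)-f^\ast + \frac{\lambda}{2}\norm{w_t}^2 .
\]
Differentiating gives
\[
\dot E_t = -\gamma\norm{\nabla f(x_t)}^2 + \bigl(\eta-\lambda(\gamma'-\gamma)\bigr)\dotprod{\nabla f(x_t),w_t} - \lambda(\eta+\eta')\norm{w_t}^2 .
\]
I choose $\lambda := \eta/(\gamma'-\gamma)$ so that the cross term vanishes; this is positive when $\gamma'>\gamma$. Applying $\pl$ to the first term then yields
\[
\dot E_t \le -2\mu\gamma\bigl(f(x_t)-f^\ast\bigr) - 2(\eta+\eta')\,\frac{\lambda}{2}\norm{w_t}^2 .
\]

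\textbf{Step 3: use the constraint.} Since $\gamma=(\eta+\eta')/\mu$, we have $2(\eta+\eta')=2\mu\gamma$, so $\dot E_t\le -2\mu\gamma E_t$. Gr\"onwall then gives $f(x_t)-f^\ast\le E_t\le E_0e^{-2\mu\gamma t}$, which is the claim.

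\textbf{Main obstacle.} The delicate point is the sign of $\gamma'-\gamma$, since the cancellation in Step 2 needs $\lambda>0$. If $\gamma'<\gamma$ or $\gamma'=\gamma$, I would first try to absorb the cross term by Young's inequality, $\eta\dotprod{\nabla f,w}\le \frac{\gamma}{2}\norm{\nabla f}^2+\frac{\eta^2}{2\gamma}\norm{w}^2$, together with a suitably large $\lambda$. This, however, costs a factor on the rate, giving $\mu\gamma$ rather than $2\mu\gamma$. To keep the exact rate, I would instead use the explicit variation-of-constants formula
\[
w_t=e^{-(\eta+\eta')t}w_0-(\gamma'-\gamma)\int_0^t e^{-(\eta+\eta')(t-s)}\nabla f(x_s)\,ds .
\]
The idea is to treat the perturbation in the $f$-equation as a term decaying at rate $\eta+\eta'=\mu\gamma$, and then close a Gr\"onwall-type estimate.

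If neither route recovers $2\mu\gamma$ in that regime, I would state the result under the implicit requirement $\gamma'>\gamma$, which is the natural regime of \eqref{eq:nmo}.
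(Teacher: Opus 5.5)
Your Steps 1--3 are the paper's proof: the same energy $f(x_t)-f^\ast+\frac{\delta}{2}\norm{x_t-z_t}^2$, the same choice $\delta=\eta/(\gamma'-\gamma)$ to cancel the cross term, and the same use of $\eta+\eta'=\mu\gamma$ to match the two decay rates. The sign condition you flag is genuinely needed, and the paper relies on it without saying so, since its last step $f(x_t)-f^\ast\le E_t$ requires $\delta\ge 0$; but your fallback routes cannot rescue $\gamma'<\gamma$, because there the statement is false: for $f(x)=\frac{\mu}{2}x^2$ on $\R$ the $(x,w)$ system is linear with eigenvalues $-\mu\gamma\pm\sqrt{\eta\mu(\gamma-\gamma')}$, and taking $\eta'=\gamma'=0$, $z_0=x_0\neq 0$ gives $x_t\to x_0/2$, so no positive rate holds (not even your Young-inequality $\mu\gamma$) and $\gamma'>\gamma$ must be added as a hypothesis (the case $\gamma'=\gamma$ is harmless when $z_0=x_0$, since then $w\equiv 0$).
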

\begin{proof}
Let
\[
E_t = f(x_t)-f^\ast + \frac{\delta}{2}\norm{x_t - z_t}^2.
\]
Then
\begin{align*}
\dot{E}_t
&= \dotprod{\grad{x_t}, \dot{x}_t} + \delta \dotprod{x_t - z_t,\dot{x}_t - \dot{z}_t}\\
&\overset{(i)}=
\dotprod{\grad{x_t},  \eta(z_t-x_t) - \gamma \grad{x_t)}}
+ \delta \dotprod{x_t - z_t, (\eta+\eta')(z_t-x_t) + (\gamma'-\gamma)\grad{x_t}}\\
&=
\bigl(\eta+\delta(\gamma-\gamma')\bigr)\dotprod{\grad{x_t},  z_t-x_t}
- \gamma \norm{\grad{x_t}}^2
- \delta(\eta + \eta')\norm{z_t-x_t}^2,
\end{align*}
where (i) follows from \eqref{eq:nmo}.
Using $f\in\pl$, we have $-\gamma \norm{\grad{x_t}}^2 \le -2\mu\gamma (f(x_t)-f^\ast)$, hence
\[
\dot E_t \le
\bigl(\eta+\delta(\gamma-\gamma')\bigr)\dotprod{\grad{x_t},  z_t-x_t}
-2\mu\gamma (f(x_t)-f^\ast)
- \delta(\eta + \eta')\norm{z_t-x_t}^2.
\]
Impose $\eta+\delta(\gamma-\gamma')=0$ (\textit{i.e.} $\delta=\eta/(\gamma'-\gamma)$) and $\eta+\eta'=\mu\gamma$.
Then $\dot E_t \le -2\mu\gamma E_t$, so $E_t \le e^{-2\mu\gamma t}E_0$.
Since $f(x_t)-f^\ast \le E_t$, the claim follows.
\end{proof}
\subsubsection{Proof of Proposition~\ref{prop:equiv_hb_nmo}}
The following proof is borrowed to \cite[Proposition 27]{hermant2025continuized}.
 From the first line of
\eqref{eq:nmo}, we have
\begin{equation}\label{eq:z_as_x_constgam}
    z_t = x_t + \frac{1}{\eta_t}\bpar{\dot{x}_t +  \gamma \nabla f(x_t)}.
\end{equation}
Differentiating \eqref{eq:z_as_x_constgam} yields
\begin{align}
    \dot{z}_t
    &= \dot{x}_t - \frac{\dot{\eta}_t}{\eta_t^2}\bpar{\dot{x}_t +  \gamma \nabla f(x_t)}
    + \frac{1}{\eta_t}\bpar{\ddot{x}_t +  \gamma \nabla^2 f(x_t)\dot{x}_t}.
\end{align}
Using this expression and the second line of \eqref{eq:nmo}, we obtain
\begin{align}
     \dot{x}_t - \frac{\dot{\eta}_t}{\eta_t^2}\bpar{\dot{x}_t +  \gamma \nabla f(x_t)}
     + \frac{1}{\eta_t}\bpar{\ddot{x}_t +  \gamma \nabla^2 f(x_t)\dot{x}_t}
     &=  \eta'_t(x_t-z_t) - \gamma' \nabla f(x_t) \\
     &= -\frac{\eta'_t}{\eta_t}\dot{x}_t -  \frac{\eta'_t \gamma}{\eta_t}\nabla f(x_t) - \gamma' \nabla f(x_t),
\end{align}
where we used $x_t-z_t = -\frac{1}{\eta_t}\bpar{\dot{x}_t +  \gamma \nabla f(x_t)}$.
Multiplying by $\eta_t$ and rearranging, we get
\begin{equation}\label{eq:second_order_x_constgam}
    \ddot{x}_t
    + \bpar{\eta_t - \frac{\dot{\eta}_t}{\eta_t} + \eta'_t }\dot{x}_t
    + \gamma \nabla^2 f(x_t)\dot{x}_t
    + \bpar{ \eta'_t \gamma + \eta_t \gamma' -\frac{\dot{\eta}_t}{\eta_t}\gamma }\nabla f(x_t)
    = 0.
\end{equation}

\subsection{Comparison with Theorem 7 of \cite{hermant2024study}}\label{app:related:uaac}
\begin{theorem}[\cite{hermant2024study}]\label{thm:uaac}
    Let $f \in \pl$, with a unique minimizer $x^\ast$. Then, there exists $(\tau,\mu') \in (0,1] \times \mathbb{R}^\ast_+$ such that $f\in \sqcf{\tau}{\mu'}$ if and only if
          \begin{equation}\label{eq:uaac}
                          \exists a > 0,  \forall x \in \mathbb{R}^d, \quad 1 \geqslant \frac{\langle \nabla f(x),x - x^\ast \rangle}{\lVert  \nabla f(x) \rVert \lVert x - x^\ast \rVert} \geqslant a> 0,
          \end{equation}
  In particular if (\ref{eq:uaac}) holds, then as long as $\tau < \frac{2 \mu a}{L}$, $F$ is $(\gamma, \frac{\mu a}{\tau} - \frac{L}{2})$-strongly quasar-convex.
\end{theorem}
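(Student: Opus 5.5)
We prove the two implications separately. Throughout we assume, as the constant $L$ in the statement requires, that $f\in\ls$; this gives both $\norm{\nabla f(x)}\le L\norm{x-x^\ast}$ (using $\nabla f(x^\ast)=0$) and $f\in\qgf{L}$. Uniqueness of the minimizer means $\text{dist}(x,\arg\min f)=\norm{x-x^\ast}$.

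\textbf{SQC $\Rightarrow$ aiming condition.} Suppose $f\in\sqcf{\tau}{\mu'}$. Since $f$ is $L$-smooth, $\norm{\nabla f(x)}\le L\norm{x-x^\ast}$ for all $x$. This is exactly the hypothesis of Proposition~\ref{prop:converse_link}, which gives $f\in\acf{2\mu'/((2-\tau)L)}$. Hence \eqref{eq:uaac} holds with $a=2\mu'/((2-\tau)L)>0$. The upper bound $1$ in \eqref{eq:uaac} is just Cauchy--Schwarz.

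\textbf{Aiming condition $\Rightarrow$ SQC, with explicit parameters.} Fix $x\neq x^\ast$. The target inequality $f^\ast\ge f(x)+\frac1\tau\dotprod{\nabla f(x),x^\ast-x}+\frac{\mu'}{2}\norm{x-x^\ast}^2$ is equivalent to
\[
\dotprod{\nabla f(x),x-x^\ast}\ge \tau(f(x)-f^\ast)+\frac{\tau\mu'}{2}\norm{x-x^\ast}^2 .
\]

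We first bound the left-hand side from below. By \eqref{eq:uaac}, $\dotprod{\nabla f(x),x-x^\ast}\ge a\norm{\nabla f(x)}\norm{x-x^\ast}$. We then need an error bound $\norm{\nabla f(x)}\ge \mu\norm{x-x^\ast}$. This follows from the inclusion $\pl\subset\qginff{\mu}$ in \eqref{eq:inclusions} (from \cite{karimi2016linear}), using the unique minimizer:
\[
\norm{\nabla f(x)}^2\ge 2\mu(f(x)-f^\ast)\ge \mu^2\norm{x-x^\ast}^2 .
\]
Combining the two gives $\dotprod{\nabla f(x),x-x^\ast}\ge a\mu\norm{x-x^\ast}^2$.

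We next bound the right-hand side from above. Since $f\in\qgf{L}$, we have $\tau(f(x)-f^\ast)\le \frac{\tau L}{2}\norm{x-x^\ast}^2$. It therefore suffices that $a\mu\ge \frac{\tau L}{2}+\frac{\tau\mu'}{2}$, that is, $\mu'\le \frac{2a\mu}{\tau}-L$.

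The stated choice $\mu'=\frac{\mu a}{\tau}-\frac{L}{2}$ is half of this bound, so it satisfies the inequality whenever it is positive. It is positive exactly when $\tau<\frac{2\mu a}{L}$. Finally, we may take $\tau\le 1$, since $a\le 1$ and $\mu\le L$ give $2\mu a/L\le 2$. This proves the ``in particular'' statement and hence the reverse implication.

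\textbf{Main obstacle.} The delicate step is the error bound $\norm{\nabla f(x)}\ge\mu\norm{x-x^\ast}$, i.e.\ PL $\Rightarrow$ quadratic growth. I would not reprove it; I would cite \cite{karimi2016linear} via \eqref{eq:inclusions}. Its proof integrates the gradient flow and uses $\frac{d}{dt}\sqrt{f(x_t)-f^\ast}\le-\sqrt{\mu/2}\,\norm{\dot x_t}$ to bound the path length to the minimizer. Everything else is algebra. I would also state explicitly that the smoothness constant $L$ is needed in both directions; the equivalence is meaningless without some such bound.
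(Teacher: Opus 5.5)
Your argument for the direction ``aiming condition $\Rightarrow$ SQC'' is correct and recovers the cited parameters. The problem is in the other direction. There you take the constant of Proposition~\ref{prop:converse_link} at face value, and that constant is wrong.

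Take $f(x)=\frac{\mu}{2}\norm{x}^2$. It lies in $\sqcf{1}{\mu}$ (the defining inequality holds with equality) and satisfies $\norm{\nabla f(x)}\le \mu\norm{x-x^\ast}$. The proposition would then give $f\in\acf{2}$, i.e.\ $\mu\norm{x}^2\ge 2\mu\norm{x}^2$. That is false, and it contradicts your own Cauchy--Schwarz remark. The inclusion $\sqcf{\tau}{\mu}\subset\rsif{2\mu/(2-\tau)}$ claimed in Appendix~\ref{app:prop:converse} is off by a factor $2/\tau$; the correct constant is $\tau\mu/(2-\tau)$.

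The theorem only asks for some $a>0$, so the repair is immediate and does not need the proposition. Rearrange the SQC inequality and drop the nonnegative term $\tau(f(x)-f^\ast)$:
\[
\dotprod{\nabla f(x),x-x^\ast}\ge \tau(f(x)-f^\ast)+\frac{\tau\mu'}{2}\norm{x-x^\ast}^2\ge \frac{\tau\mu'}{2}\norm{x-x^\ast}^2\ge \frac{\tau\mu'}{2L}\norm{x-x^\ast}\norm{\nabla f(x)}.
\]
So \eqref{eq:uaac} holds with $a=\tau\mu'/(2L)$. You get the sharper $a=\tau\mu'/((2-\tau)L)$ if you first integrate SQC along the segment $[x^\ast,x]$ to obtain $f(x)-f^\ast\ge\frac{\tau\mu'}{2(2-\tau)}\norm{x-x^\ast}^2$.

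Two smaller points:
\begin{itemize}
\item Under the paper's one-sided definition of $\ls$, the bound $\norm{\nabla f(x)}\le L\norm{x-x^\ast}$ does not follow from Lipschitz continuity of $\nabla f$. It still holds: applying $\ls$ between $x$ and $x-\frac{1}{L}\nabla f(x)$ gives $\norm{\nabla f(x)}^2\le 2L(f(x)-f^\ast)$, and $\ls$ at $y=x^\ast$ gives $f(x)-f^\ast\le\frac{L}{2}\norm{x-x^\ast}^2$.
\item The observation $2\mu a/L\le 2$ does not give $\tau\le 1$. You simply choose $\tau\in(0,\min\{1,2\mu a/L\})$.
\end{itemize}

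The paper does not reprove this cited theorem. Its own argument for the same direction is Lemma~\ref{lem:scalar_prod_bound}, and the contrast is worth noting.

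You convert PL into the error bound $\norm{\nabla f(x)}\ge\mu\norm{x-x^\ast}$, obtain $\rsif{a\mu}$, and then spend $\qgf{L}$ to absorb $\tau(f(x)-f^\ast)$. This forces $\tau<2a\mu/L$, which is exactly the $\tau=O(a\mu/L)$, $\mu'=O(L)$ regime that Appendix~\ref{app:related:uaac} shows yields no accelerated bound.

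The lemma instead keeps $\norm{\nabla f(x)}\ge\sqrt{2\mu}\sqrt{f(x)-f^\ast}$. It bounds the product $\sqrt{f(x)-f^\ast}\,\norm{x-x^\ast}$ once through $\qgf{L_0}$ and once through $\qginff{\mu_0}$, then averages the two. This gives $\sqcf{a\sqrt{\mu/L_0}}{\sqrt{\mu\mu_0}}$, whose $\tau$ does not degrade like $\mu/L$.

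Your route is the natural one for reproducing the cited constants. The lemma's route is the one that supports the paper's acceleration results.
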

 The parameters derived in Theorem~\ref{thm:uaac} are significantly different compared to those we deduce from our analysis, see Appendix~\ref{app:link}. Theorem~\ref{thm:uaac} ensures that, at best, we can take $\tau = \bigO(a\frac{\mu}{L})$, inducing $\mu' = \bigO(L)$. We are in the setting where the condition number interpretation fails for strongly quasar-convex functions, see Section~\ref{sec:sqc_pitfalls}. Also, plugging these choices of parameters in Table~\ref{table:sc_sqc_rates} in the line of convergence rate for $\sqc$, if the convergence rates of \eqref{eq:gf} and \eqref{eq:gd} remain the same, namely $a \mu$ and $a \mu /L$ respectively, the convergence rates of \eqref{eq:nmo} and \eqref{eq:nm} however also reduce to $a \mu$ and $a \mu /L$ respectively, such that we do not obtain accelerated bound with Nesterov's momentum.
\subsection{Choice of Parameters for $\sqc$.}\label{app:choice_param_sqc}
In Section~\ref{sec:sqc_pitfalls}, we discuss that $ \sup_\tau  \sup_\mu \{(\tau,\mu) \in (0,1] \times \R_+: f \in \sqc \}$ and $\sup_\mu \sup_\tau \{(\tau,\mu) \in (0,1] \times \R_+: f \in \sqc \}$ leads to different choices of parameters. To justify this statement, we assume there exists $\tau, \mu \in (0,1]\times \R_+$ such that $f \in \sqc$. We first recall
$f \in \sqcf{\tau}{\mu} \Rightarrow f \in \sqcf{\theta \tau}{\mu/\theta}$, for all $\theta \in (0,1]$ \cite[Observation 5]{hinder2020near}. Letting $\theta \to 0$, it follows that
$$\arg \sup_\tau \arg \sup_\mu \{ (\tau,\mu) \in (0,1] \times \R_+:f \in \sqc \} = (0,+\infty).$$
On the other hand, it is clear that
$\arg \sup_\mu \arg \sup_\tau \{ (\tau,\mu)\in (0,1] \times \R_+ : f \in \sqc \} \neq (0,+\infty)$.
\section{More Details on Figures and Numerical Experiments}
In this section, we provide details about our method to generate Figure~\ref{fig:pl_sqc_low_corr}, Figure~\ref{fig:sqc_flaws} (Appendix~\ref{app:param_details}) and Figure~\ref{fig:toy_prob} (Appendix~\ref{app:toy_prob}). We provide in Appendix~\ref{app:heuristic} a heuristic discussion of the bound over aiming condition to have acceleration derived in Section~\ref{sec:mom_cv}. We also provide in Appendix~\ref{app:conceptual_figures} additional figures, useful for the main text but that could not be included because of space limitations. Our figures and experiments are reproducible; see the code in the supplement.
\subsection{Details about Figure~\ref{fig:pl_sqc_low_corr} and Figure~\ref{fig:sqc_flaws}}\label{app:param_details}
In this section, we detail how we numerically compute the convergence rate of the functions displayed in Figure~\ref{fig:pl_sqc_low_corr} and Figure~\ref{fig:sqc_flaws}. We split our presentation between the two underlying functions.

\textbf{One-dimensional example \:}
The function displayed on the left in Figure~\ref{fig:pl_sqc_low_corr} is
\[f(t) = 5(t+0.19\sin(5t))^2, \]
defined on $[-2,2]$. It has $0$ as its unique critical point and minimizer, which satisfies $f(0) = 0$. We define a grid of $10^{5}$ points in $[-2,2]$, equally spaced. This grid has been refined until the subsequent numerical values are not impacted by further refinement. For the $i$-th point $x_i$ in this grid, we compute the associated point-wise PL value
\[ \mu_i = \frac{f'(x_i)^2}{2f(x_i)},  \]
and point-wise $L$-smooth value
\[ L_i = |f''(x_i)|.\]
We define $\mu := \min_{1\le i\le 10^5} \mu_i$ and $L := \max_{1\le i \le 10^5} L_i$ the constant such that numerically, $f \in \pl \cap \ls$ on $[-2,2]$. On this domain, we thus obtain a convergence rate $\mu/L$ for \eqref{eq:gd}.

Because of the double parameterization, the case of $\sqc$ is slightly more involved. We define a grid of $1000$ potential $\tau$ parameters, equally spaced in $[10^{-5},0.1]$. Note that we take $10^{-5}$ as the lower bound for this interval instead of zero, to avoid dividing by zero. For a given $\tau_j$ in the grid on $[10^{-5},0.1]$ , and each $x_i$ belonging to the grid on $[-2,2]$, we compute the point-wise strong quasar convexity constant $\mu'_{i,j}$
\[ \mu'_{i,j} = -2\frac{\Big( f(x_i) - f'(x_i)x_i \Big)}{x_i^2}.  \]
For this $\tau_j$, we define $\mu'_j = \min_{1\le i \le 10^5} \mu_{i,j}$ the constant such that numerically, $f \in \sqcf{\tau_j}{\mu_j}$. In the case where $\mu_j \le 0$, we consider $\tau_j$ is not admissible, which occurs for $\tau_j \ge 0.1$. The pairs $(\tau_j,\mu_j)_{1\le j\le 1000}$ thus generate Figure~\ref{fig:sqc_flaws}, by plotting $(\tau_j \mu_j/L)_{1\le j \le 1000 }$ for \eqref{eq:gd} and $(\tau_j \sqrt{\mu_j/L})_{1\le j \le 1000 }$ for \eqref{eq:nm}.  As a supplement, we plot in Figure~ $(\tau_j \mu_j)_{1\le j \le 1000 }$ and $(\tau_j \sqrt{\mu_j})_{1\le j \le 1000 }$ which are the convergence rates of \eqref{eq:gf} and \eqref{eq:nmo}. We observe a similar behavior compared with the discrete case. Finally, we obtain the convergence rate values of Figure~\ref{fig:pl_sqc_low_corr} by considering $\max_{1\le j \le 1000} \tau_j \mu_j/L$ for \eqref{eq:gd}, and $\max_{1\le j \le 1000} \tau_j \sqrt{\mu_j/L}$ for \eqref{eq:nm}. As illustrated in Figure~\ref{fig:sqc_flaws}, the pair $(\tau_{j_0},\mu_{j_0})$ that maximizes the rate of \eqref{eq:gd} differs from the pair $(\tau_{j_1},\mu_{j_1})$ that maximizes the rate of \eqref{eq:nm}.

\textbf{Two-dimensional example \:}
The function displayed on the left in Figure~\ref{fig:pl_sqc_low_corr} is
\[f(x,y) = 0.5(0.5x^2 -y)^2 + 0.05x^2,\]
defined on the square $[-1.2638,1.2638]^2$. It has a unique minimizer and critical point at $(0,0)$. To compute the convergence rate for this function, we proceed similarly to the one-dimensional case. The only difference is that we define a two-dimensional grid on $[-1.2638,1.2638]^2$ of $10^6$ points, equally spaced. The grid has been refined until the numerical values we compute are not impacted by further refinement.

\subsection{Details about Figure~\ref{fig:toy_prob}}\label{app:toy_prob}

In Figure~\ref{fig:toy_prob}, we optimize
\[F_{\varepsilon}(x,y) = 0.5(y-\sin(x))^2 + 0.5\varepsilon x^2.,\]
with $\varepsilon =10^{-3}$. It is clear that $F_{\varepsilon}(0,0) = 0$ is a global minimizer, and for all $(x,y) \neq (0,0)$, $F_{\varepsilon}(x,y) > 0$. This function has no other critical point, which is induced by the fact that it is PL.

\begin{lemma}\label{lem:toy_example_pl}
    For $x,y\in\R,\ \varepsilon>0$ the function $F_{\varepsilon}(x,y)=\tfrac12\,(y-\sin x)^2+\tfrac12\,\varepsilon x^2$ belongs to $\pl$ with
  $$\mu \le \frac{2+\varepsilon
-\sqrt{\varepsilon^2+4}}{2}.$$

\end{lemma}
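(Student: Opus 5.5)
The plan is to reduce the PL inequality to a pointwise eigenvalue bound for a $2\times 2$ quadratic form. Set $u := y-\sin x$, so that $F_{\varepsilon}(x,y)=\tfrac12 u^2+\tfrac12\varepsilon x^2$. A direct computation gives $\nabla F_{\varepsilon}(x,y) = \bpar{\varepsilon x - u\cos x,\; u}$, hence
\[
\norm{\nabla F_{\varepsilon}(x,y)}^2 = (1+\cos^2 x)\,u^2 - 2\varepsilon x u\cos x + \varepsilon^2x^2 .
\]
The target inequality $\norm{\nabla F_{\varepsilon}}^2 \ge 2\mu\bpar{F_{\varepsilon}-F_{\varepsilon}^\ast} = \mu\bpar{u^2+\varepsilon x^2}$ (with $F_{\varepsilon}^\ast=0$) becomes homogeneous after the substitution $w:=\sqrt{\varepsilon}\,x$ and $c:=\cos x$. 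It then reads
\[
(1+c^2)u^2 - 2\sqrt{\varepsilon}\,c\,uw + \varepsilon w^2 \ge \mu\,(u^2+w^2).
\]

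The key step is to freeze $c$ and treat $(u,w)$ as free variables. This is legitimate because, for any fixed value of $c\in[-1,1]$, it suffices that the inequality hold for all $(u,w)\in\R^2$. It therefore suffices to show that the smallest eigenvalue $\lambda_{\min}(c)$ of
\[
M_c=\begin{pmatrix}1+c^2 & -\sqrt{\varepsilon}c\\ -\sqrt{\varepsilon}c & \varepsilon\end{pmatrix}
\]
is at least $\mu$ for every $c\in[-1,1]$. Writing $s=c^2\in[0,1]$, the determinant is $\det M_c=(1+s)\varepsilon-\varepsilon s=\varepsilon$, which does not depend on $s$. The trace is $T(s)=1+s+\varepsilon$. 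Hence
\[
\lambda_{\min}(s)=\frac{T(s)-\sqrt{T(s)^2-4\varepsilon}}{2},
\]
and this quantity is decreasing in $T$. Indeed, it equals $2\varepsilon/\bpar{T+\sqrt{T^2-4\varepsilon}}$, whose denominator increases with $T$. So the minimum over $s\in[0,1]$ is attained at $s=1$, that is, at $T=2+\varepsilon$. This gives
\[
\lambda_{\min}\ge \frac{2+\varepsilon-\sqrt{(2+\varepsilon)^2-4\varepsilon}}{2}=\frac{2+\varepsilon-\sqrt{\varepsilon^2+4}}{2}.
\]
Consequently $F_{\varepsilon}\in\pl$ for every $\mu$ not exceeding this value, which is the statement.

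The main obstacle is the nonlinear coupling between $x$ and $u$ through $\cos x$: the gradient norm is not a fixed quadratic form in $(u,x)$. Decoupling $c$ from $(u,w)$ resolves this, and the observation that makes the argument short is that the determinant of $M_c$ is constant. Without that, one would have to minimize the eigenvalue expression in $s$ by calculus. A final routine check is that the resulting bound is positive for $\varepsilon>0$: we have $(2+\varepsilon)^2 > \varepsilon^2+4$, so the bound is positive. Hence the PL constant is nontrivial, and $(0,0)$ is the only critical point.
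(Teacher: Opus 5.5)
Your proof is correct and follows essentially the same route as the paper. The paper also writes $\norm{\nabla F_{\varepsilon}}^2$ and $2F_{\varepsilon}$ as quadratic forms in $(u,x)$ and rescales by $N^{-1/2}$, which is equivalent to your substitution $w=\sqrt{\varepsilon}\,x$; it obtains the same matrix with determinant $\varepsilon$ and trace $1+c^2+\varepsilon$, and minimizes its smallest eigenvalue at $c^2=1$. The only cosmetic difference is that the paper proves monotonicity in $c^2$ by differentiating, while your rationalization $\lambda_{\min}=2\varepsilon/(T+\sqrt{T^2-4\varepsilon})$ avoids calculus.
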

\begin{proof}
    Consider the function
\[
F_{\varepsilon}(x,y)=\tfrac12\,(y-\sin x)^2+\tfrac12\,\varepsilon x^2,
\qquad x,y\in\R,\ \varepsilon>0.
\]

Let
\[
u(x,y) := y - \sin x.
\]
Then
\[
F_{\varepsilon}=\tfrac12\,(u^2+\varepsilon x^2).
\]

The gradient is
\[
\nabla F_{\varepsilon}=
\begin{pmatrix}
-u\cos x + \varepsilon x\\[3pt]
u
\end{pmatrix},
\qquad
\|\nabla F_{\varepsilon}\|^2 = u^2 + (-u\cos x + \varepsilon x)^2.
\]

Writing both sides as quadratic forms in \(z=(u(x,y),x)^\top\), we obtain
\[
2F_{\varepsilon}
= z^\top 
\begin{pmatrix}
1 & 0\\[3pt]
0 & \varepsilon
\end{pmatrix}z,
\qquad
\|\nabla F_{\varepsilon}\|^2
= z^\top
\begin{pmatrix}
1+\cos^2 x & -\varepsilon\cos x\\[3pt]
-\varepsilon\cos x & \varepsilon^2
\end{pmatrix}z.
\]

Define
\[
N=\begin{pmatrix}1&0\\0&\varepsilon\end{pmatrix},\qquad
M(x)=\begin{pmatrix}
1+\cos(x)^2 & -\varepsilon \cos(x)\\[3pt]
-\varepsilon \cos(x) & \varepsilon^2
\end{pmatrix},
\qquad c:=\cos x\in[-1,1].
\]
So, $f$ belong to $\pl$ with
\begin{equation}
    \begin{aligned}
        \mu = \min_{\substack{z=(u(x,y),x)\neq 0,\\(x,y)\in \R^2}}~ \frac{z^T M(x)z}{z^T N z} &\ge \min_{\substack{z=(z_1,z_2),\\(z_1,z_2)\in \R^2\backslash (0,0)}}~ \frac{z^T M(x)z}{z^T N z} \\&\overset{(i)}{=}  \min_{\substack{z=(z_1,z_2),\\(z_1,z_2)\in \R^2\backslash (0,0)}}~ \frac{z^T N^{-1/2}M(x)N^{-1/2}z}{z^Tz} \\
        &\overset{(ii)}{=} \lambda_{\min}(N^{-1/2}M(x)N^{-1/2}),
    \end{aligned}
\end{equation}
where (i) holds with the reparameterization $z' = N^{-1/2}z$, and (ii) is a property of the Rayleigh coefficient. We consider the matrix
\[
N^{-1/2}M(x)N^{-1/2}
=
\begin{pmatrix}
1+\cos(x)^2 & -\sqrt{\varepsilon}\,\cos(x)\\[3pt]
-\sqrt{\varepsilon}\,\cos(x) & \varepsilon
\end{pmatrix}
:= \begin{pmatrix}
1+c^2 & -\sqrt{\varepsilon}\,c\\[3pt]
-\sqrt{\varepsilon}\,c & \varepsilon
\end{pmatrix}=: B(c),\quad c:= \cos(x) \in [-1,1].
\]
So, we have
$$\lambda_{\min}(N^{-1/2}M(x)N^{-1/2}) \ge \inf_{c\in[-1,1]}\ 
\lambda_{\min}(B(c)).$$

The eigenvalues of \(B(c)\) are
\[
\lambda_\pm(c)
=
\frac{\operatorname{tr}(B(c))
\pm
\sqrt{\operatorname{tr}(B(c))^2 - 4\det(B(c))}}{2}.
\]

We have
\[
\det(B(c))=\varepsilon,
\qquad
\operatorname{tr}(B(c)) = 1 + c^2 + \varepsilon.
\]
Such that we solve
$$\min_{c \in [-1,1]} \frac{1+c^2 + \varepsilon - 
\sqrt{(1+c^2 + \varepsilon)^2 - 4\varepsilon}}{2} =\min_{t \in [0,1]} \frac{1+t + \varepsilon - 
\sqrt{(1+t + \varepsilon)^2 - 4\varepsilon}}{2} := \phi(t).$$
We have for $t \in [0,1]$,
$$2\phi'(t) = 1 - \frac{1+t+\varepsilon}{\sqrt{(1+t + \varepsilon)^2 - 4\varepsilon}} \le 0,$$
such that $\phi$ is decreasing on $[0,1]$. So, the minimum is reached for $t = 1 \Rightarrow c^2 = 1$, which leads to the eigenvalue
$$\inf_{c\in[-1,1]}\ 
\lambda_{\min}(B(c)) = \lambda_{\min}(B(1)) = \frac{2+\varepsilon
-\sqrt{\varepsilon^2+4}}{2}
>0.$$

Therefore, for all \((x,y)\in\R^2\),
\[
\|\nabla F_{\varepsilon}(x,y)\|^2 \;\ge\; 2\mu\, F_{\varepsilon}(x,y),
\]
with 
$$\mu \le \frac{2+\varepsilon
-\sqrt{\varepsilon^2+4}}{2}.$$
\end{proof}
The Hessian matrix of $F_{\varepsilon}$ at $(x,y)$ is
\[ \nabla^2  F_{\varepsilon}(x,y) = \begin{pmatrix}
   \cos(x)^2 + \sin(x)(y-\sin(x) +\varepsilon & -\cos(x) \\ -\cos(x) & 1
\end{pmatrix}.\]
The function is not globally $L$-smooth, because of the $y$ factor in $\nabla^2_{1,1} F_{\varepsilon}(x,y)$. However, for a fixed $y$, the eigenvalues are bounded for any $x\in \R$.

\textbf{Implementation Details \:} We fix $\varepsilon = 10^{-3}$, and chose $(0,3)$ as the initialization point. We compute the $L$-smooth value of $F_{0.001}$ on the domain $[-2\pi,2\pi] \times [-3,3]$, using a grid of $10^6$ equally spaced points. We then fix $\gamma = 1/L$ for both \eqref{eq:gd} and \eqref{eq:nm_prime}.

\subsection{Proof of Proposition~\ref{prop:hard_fonc_sqc}}\label{app:hard_fonc_sqc}
 The definition of the hard instance is rather involved. The core object of its definition is the following function
\[ g_{T,t}(x) = q_{T,t}(x) + \sum_{i=1}^{Tt}v_{y_i}(x_i), \]
where $x_i$ is the $i$-th coordinate of $x \in \R^{Tt} \to \R$, where $q_{T,t}(x)$ is defined as follows
\[q_{T,t}(x) = \frac{1}{2}\sum_{i=0}^{t-1}\left[ \Big(\frac{7}{8}x_{iT}-x_{iT+1}\Big)^2 + \sum_{j=1}^{T-1}(x_{iT+j+1}-x_{iT+j})^2 \right],\]
and for all $y\in \R$, $v_y(t)$ is the following one-dimensional function
\begin{equation*}
     v_y(t) = \left\{ \begin{array}{ll}
     \frac{1}{2}t,& t\le \frac{31}{32}y,  \\
     \frac{1}{2}t^2 - 16(t-\frac{31}{32}y)^2,& \frac{31}{32}y < t \le y,\\
     \frac{1}{2}t^2 - \frac{y^2}{32} + 16(t-\frac{33}{32}y)^2, &y<t\le \frac{33}{32}y,\\
     \frac{1}{2}t^2 - \frac{y^2}{32},&t>\frac{33}{32}y.\end{array} \right.  
\end{equation*}
The function $q_{T,t}$ belongs to the class of zero-chain functions \cite{nesterov2004introductory,lowerboundI,carmon2021lower}, which are hard to minimize with first-order algorithms. The key idea is that for these functions, if starting from $0$, first-order algorithms will be able to explore only one new coordinate at each iteration for a number of iterations equal to the dimension of the problem. Here, the $v_y(\cdot )$ components introduce non-convexity. Up to a rescaling, the function $g_{T,t}$ belongs to $\pl \cap \ls$ for desired constants $\mu$ and $L$, see all details in \cite{PLlowerbound}.

\textit{Proof of Proposition~\ref{prop:hard_fonc_sqc}.}
 The function $q_{T,t}$ is a strongly convex quadratic function with unique minimizer $0$, such that it belongs to $\sqc$ for some parameters. The $v_y(\cdot )$ components introduces non-convexity.  For any $y\in \R$, the function $v_y(\cdot)$ has zero as its unique minimizer, and has it belongs to $\pl \cap \ls$ for some parameter, which implies it belongs to $\sqc \cap \ls $ because the function is one-dimensional. It follows that as a sum of one-dimensional functions in $\sqc \cap \ls$, the function $ \sum_{i=1}^{Tt}v_{y_i}(x_i)$ belongs to $\sqc \cap \ls$ for some parameters, with unique minimizer $0 \in \R^d$. Then, by sum of strongly quasar-convex functions with same minimizer, $g_{t,T}$ is strongly quasar convex \cite[Appendix D.3]{hinder2020near}. Finally, to have the function $g_{T,t}$ belongs to $\pl \cap \ls$ for desired constants $\mu$ and $L$, it remains to rescale it as $\tilde g(x) = a_0g_{t,T}(y-a_1x)$ for some constants $a_0,a_1 \in \R$ and where $y = (y_1,\dots,y_{Tt})$, see all details in \cite{PLlowerbound}. Such rescaling preserve the strong quasar convexity property, see \cite[Appendix D.3]{hinder2020near}. 

\subsection{Supplementary Figures}
In this section, we provide some figures that are not in the main text, due to available space. 

\textbf{About Figure~\ref{fig:figure_sqc} \:}\label{app:conceptual_figures} We display in Figure~\ref{fig:figure_sqc} two synthetic examples of functions belonging to $\sqc$, in dimension one and two. The one-dimensional example is
\[f(t) =  0.5\cdot(t +0.15\sin(5t))^2.\]
The two dimensional example is $h: \mathbb{R}^2 \to \mathbb{R}$, built as follows:
  \begin{equation}
    h(x) = f(\lVert x \rVert)g\left(\frac{x}{\lVert x \rVert} \right)
\end{equation}
where $f(t) = t^2$ and 
\begin{equation}
    g(x_1,x_2) = \frac{1}{4N} \sum_{i=1}^N \left(a_i \sin(b_i x_1)^2 + c_i \cos(d_i x_2)^2 \right) + 1
\end{equation}
with $N = 10$ and the $\{ a_i \}_i$, $\{ c_i \}_i$ are independently and uniformly distributed on $[0,20]$, and the $\{ b_i \}_i$, $\{ d_i \}_i$ are independently and uniformly distributed on $[-25,25]$. It is borrowed from \cite{hermant2024study}.
\begin{figure}[ht]
    \centering
    \begin{minipage}[b]{0.4\textwidth}
        \centering
        \includegraphics[width=\textwidth]{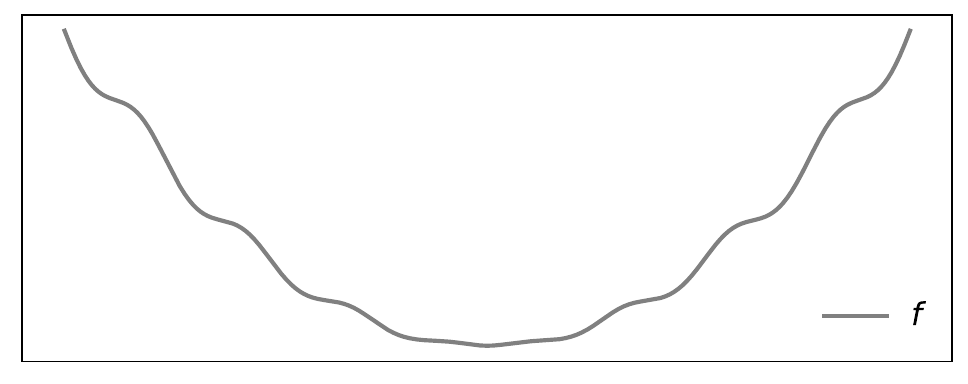}\\[1ex]
        \includegraphics[width=\textwidth]{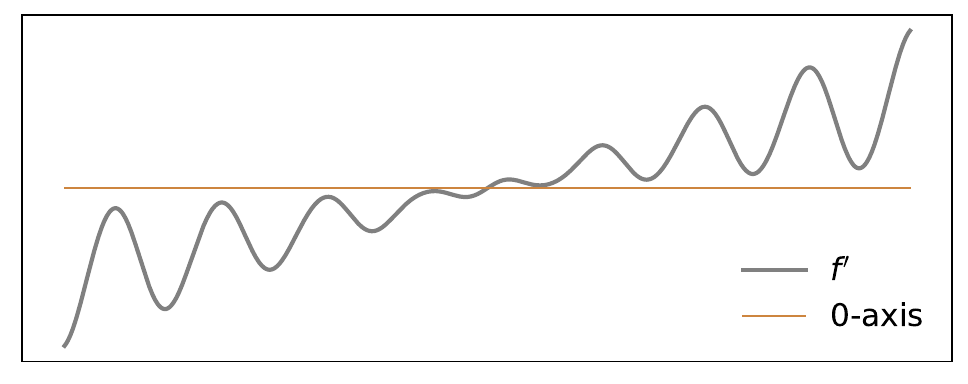}
    \end{minipage}%
    \begin{minipage}[b]{0.38\textwidth}
        \centering
        \includegraphics[width=\textwidth]{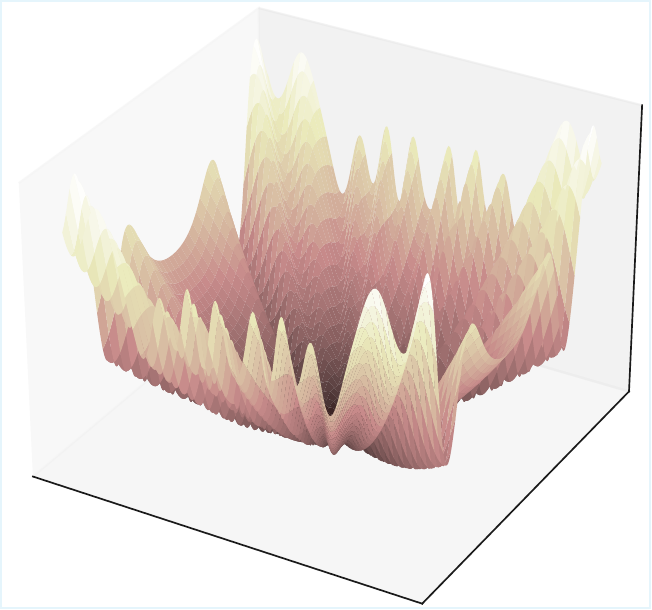}
    \end{minipage}
    \caption{Visualization of functions belonging in $\sqc$, for some parameters. The figure is borrowed from \cite{hermant2025continuized}, see details in Appendix~\ref{app:conceptual_figures}.}
    \label{fig:figure_sqc}
\end{figure}

\textbf{About Figure~\ref{fig:quad_bounds} \:} Figure~\ref{fig:quad_bounds} displays a simple one-dimensional example where $f \in \ls \cap \qg$ with $L$ significantly larger than $L_0$, and similarly for $f \in \pl \cap \qginf$. Such differences are created by local variations outside of the vicinity of the minimizer. This is because the parameterizing $\qg$ and $\qginf$ is mainly affected by the geometry in the vicinity of the minimizer, while $\ls$ and $\pl$ are strongly sensitive to variations at any points.

\textbf{About Figure~\ref{fig:worst_case} \:} Figure~\ref{fig:worst_case} illustrates that, for a function belonging to $\pl$, the optimal $\mu$ may depend on the local smaller curvature at a minimizer $x^\ast$, or by plateaus. 
\begin{figure}
    \centering
    \includegraphics[width=0.5\linewidth]{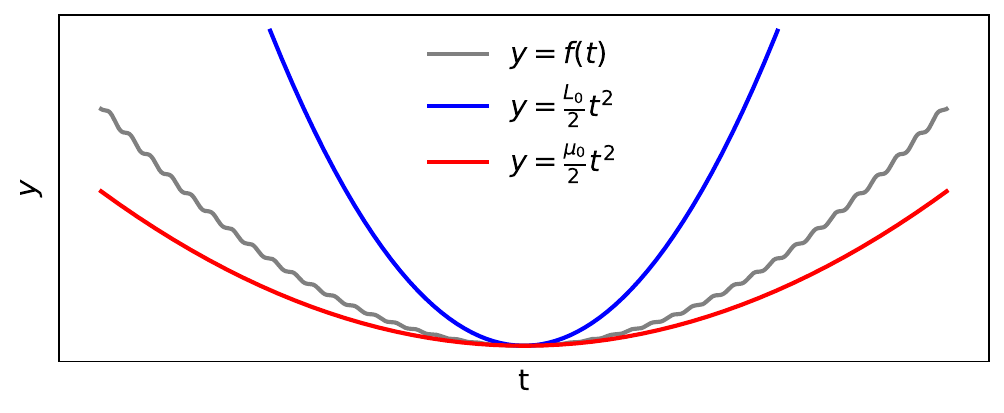}
    \caption{ The grey curve is $f(t) = 0.5 \cdot  5(t+0.07\sin(13t))^2$. The blue (red) curve is a quadratic upper (lower) bound. We compute the largest $\mu$ and smallest $L$ such that  $f \in \pl \cap \ls$ on the displayed domain, numerically evaluated at $\mu \approx 4\cdot10^{-2}$, $L \approx 6\cdot 10^2$. Also, the parameters $L_0$ and $\mu_0$ that parameterize the quadratic bounds are $\mu_0 \approx 3$ and $L_0 \approx 18$. On this prototype 1-dimensional example, $\frac{\mu_0}{L_0} \approx 0.2$ while $\frac{\mu}{L} \approx 7\cdot 10^{-5}$. This highlight the possibility of a significant gap between these two ratios.}
    \label{fig:quad_bounds}
\end{figure}
\begin{figure}[t!]
        \centering
  \includegraphics[width=0.4\linewidth]{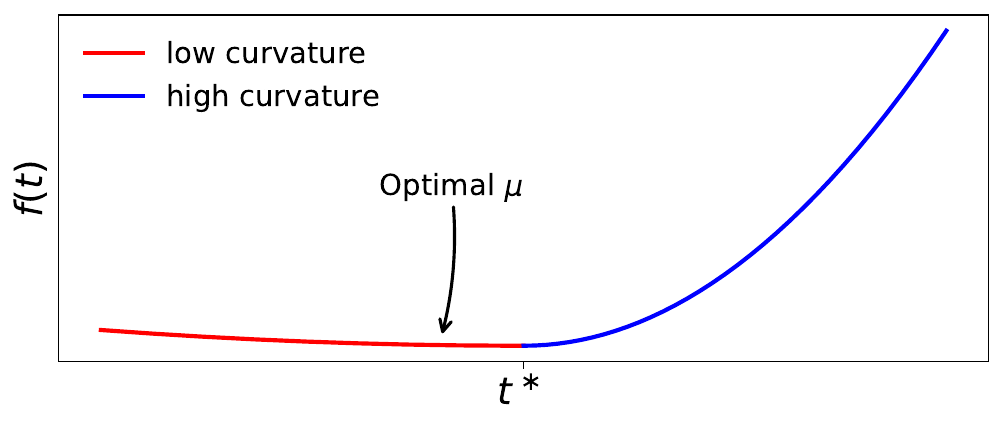}
    \includegraphics[width=0.4\linewidth]{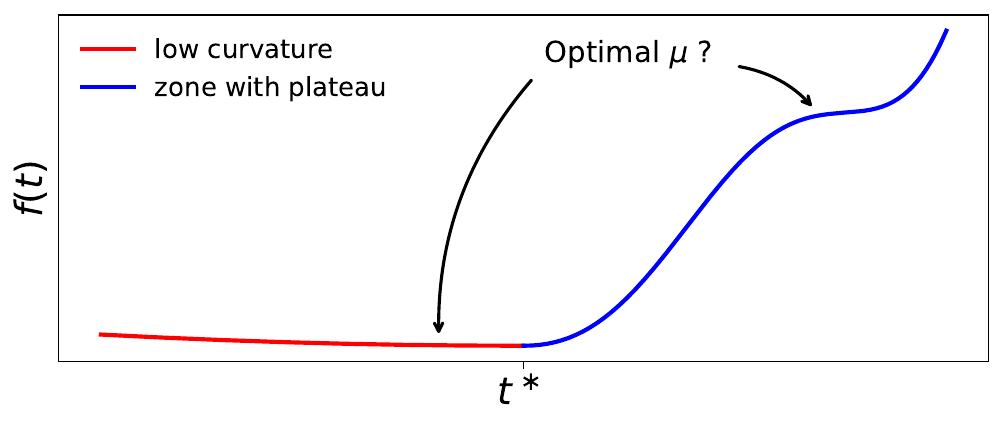}

    \caption{Left: Simple $1$-dimensional functions in $\strongconv$, so also in $\pl$. The worst approaching direction toward the minimizer $t^\ast$ is the red zone, which determines the optimal $\mu$ constant. Right: Simple non-convex function that belongs to $\pl$ for some $\mu > 0$. The $\mu$ constant could be determined either by the local flatness around the minimizer $t^\ast$ (red zone), or by the flatness of the plateau (blue zone).} 
    \label{fig:worst_case}
\end{figure}
\begin{figure}
    \centering
    \includegraphics[width=0.4\linewidth]{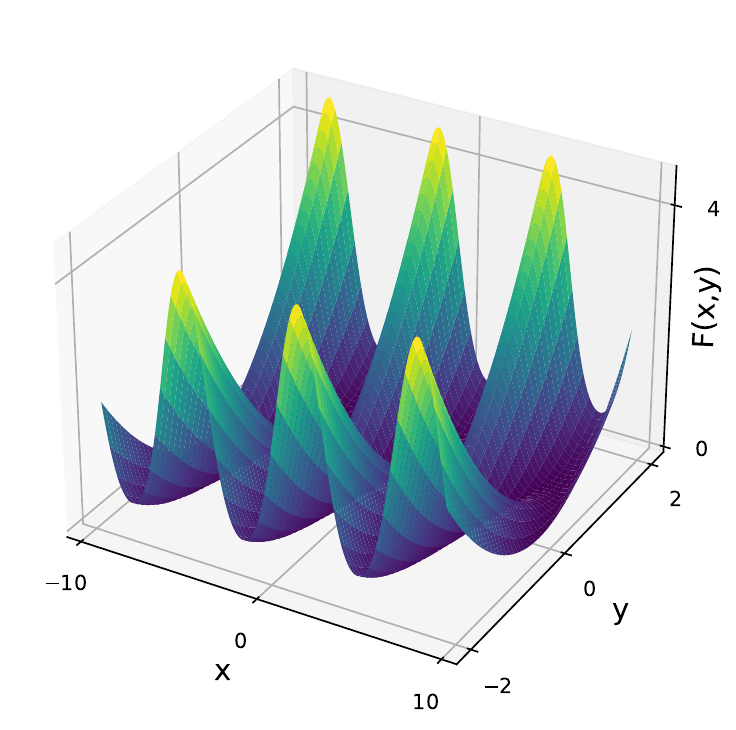}
    \caption{ $3$-dimensional visualization of $F_{0.001}(x,y)= 0.5(y-\sin(x))^2 + 0.001\cdot0.5x^2$, considered in Section~\ref{sec:numerical}}
    \label{fig:3d_vizualization}
\end{figure}

\begin{figure}
    \centering

        \begin{minipage}{0.48\linewidth}
            \centering
            \includegraphics[width=\linewidth]{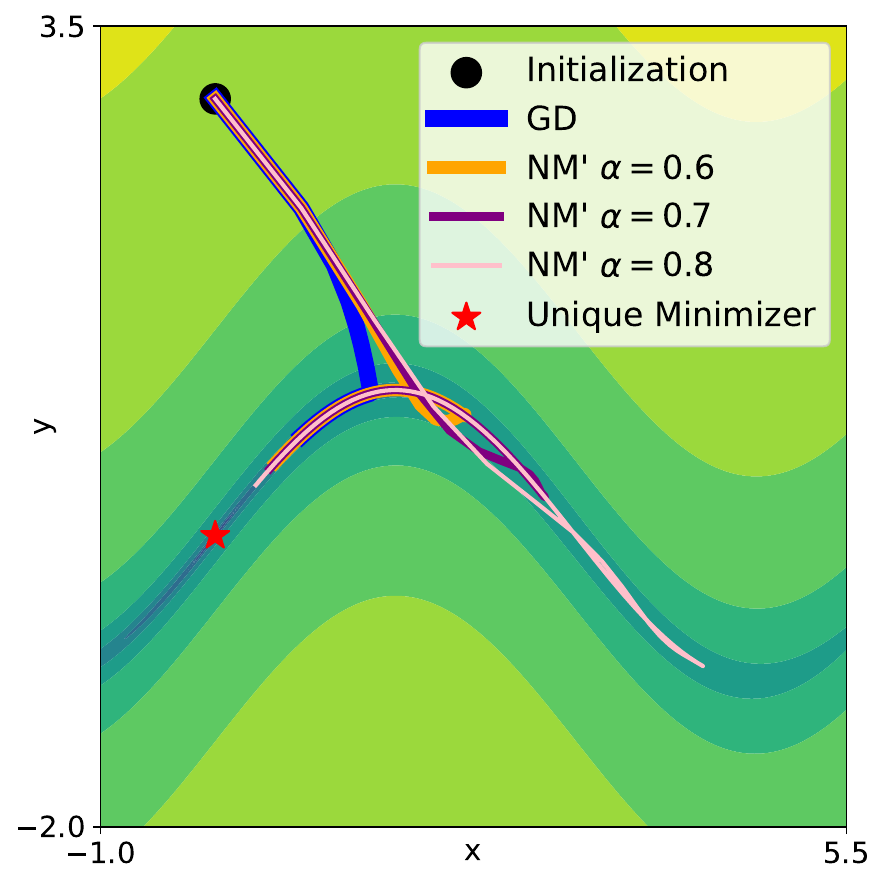}
        \end{minipage}
    \hfill
    \begin{minipage}{0.5\linewidth}
        \centering
        \includegraphics[width=\linewidth]{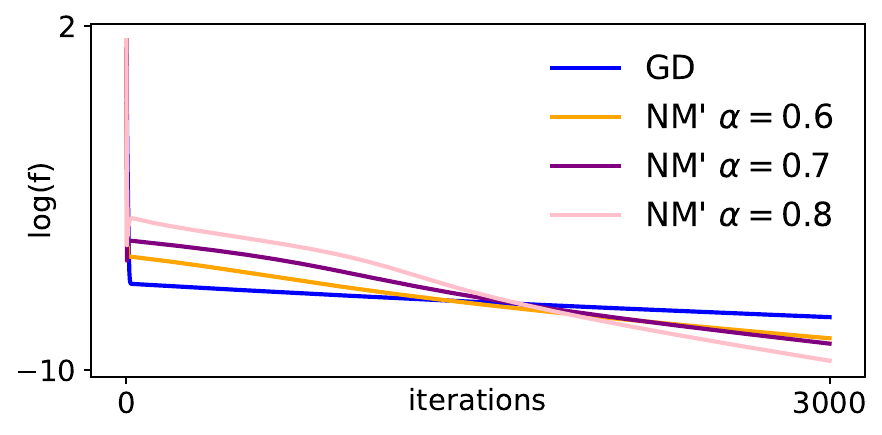}

        \vspace{0.3cm}

        \includegraphics[width=\linewidth]{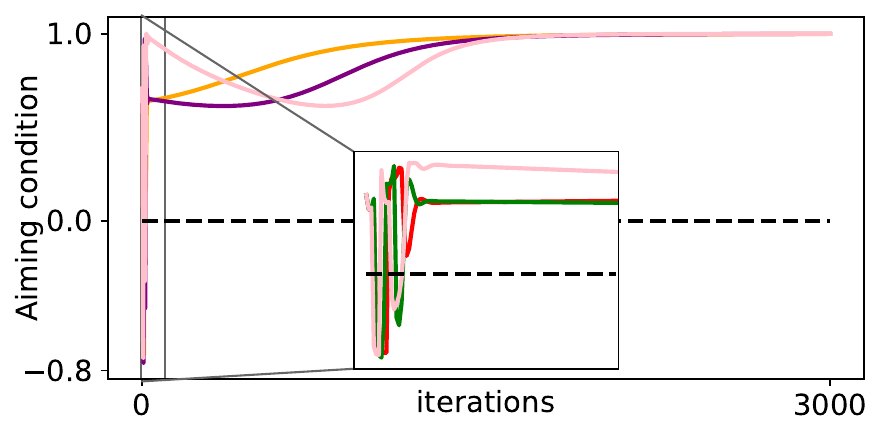}
    \end{minipage}
    \caption{ Plot of the $3000$ first iterations of the trajectories of \eqref{eq:gd} and \eqref{eq:nm_prime} on $f$, when minimizing $F_{0.001}(x,y) = 0.5(y-\sin(x))^2 + 0.001\cdot0.5x^2$. This is a version of Figure~\ref{fig:toy_prob} with $3000$ iterations instead of $1000$. Observe that the decrease of the momentum method \eqref{eq:nm_prime} catches up with the one of \eqref{eq:gd}, which is coherent with the values of the aiming condition.
     }
    \label{fig:convergence_longer}
\end{figure}
\begin{figure}
    \centering
    \includegraphics[width=0.8\linewidth]{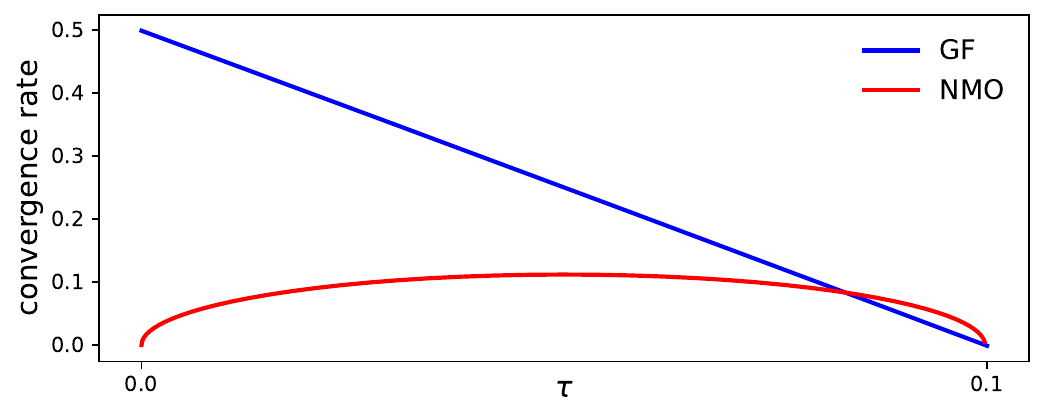}
    \caption{For a range of parameters $\tau$, we compute the highest admissible $\mu$ ensuring $f \in \sqc $, and plot the associated theoretical convergence rates of \eqref{eq:gf} and \eqref{eq:nmo} under $\sqc$, namely $\tau \mu$ and $\tau \sqrt{\mu}$, with $f(t) = 5(t+0.19\sin(5t))^2$. See details in Appendix~\ref{app:param_details}. This is the continuous counterpart of Figure~\ref{fig:sqc_flaws}.
Similarly to the discrete setting, the pair $(\tau, \mu)$ that maximizes the convergence rate differ in both cases.}
    \label{fig:bound_cont}
\end{figure}

\section{Technical Lemmas}
The following lemmas serve as technical tools for our convergence proofs.

\begin{lemma}\label{lem:prod}
Let $f \in \pl \cap \qg$. Noting $
\mu_0 := \sup \{\mu'\ge \mu: f \in  \qginff{\mu'} \}$, 
we have for all $x\in\R^d$,
\[
\norm{x-x^\ast}\norm{\nabla f(x)}
\ge \sqrt{\frac{\mu}{L_0}}(f(x)-f^\ast) + \frac{\sqrt{\mu \mu_0}}{2}\norm{x-x^\ast}^2.
\]
\end{lemma}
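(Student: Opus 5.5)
This is the aiming-free version of Lemma~\ref{lem:scalar_prod_bound}, and I would prove it the same way with $a=1$: bound the product $\norm{x-x^\ast}\norm{\nabla f(x)}$ from below in two different ways, then average the two bounds. If $x=x^\ast$ both sides vanish, so I assume $x\neq x^\ast$.

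\textbf{Core inequality from PL.} Since $f\in\pl$, we have $\norm{\nabla f(x)}\ge\sqrt{2\mu}\sqrt{f(x)-f^\ast}$. Hence
\[
\norm{x-x^\ast}\norm{\nabla f(x)}\ge \sqrt{2\mu}\,\sqrt{f(x)-f^\ast}\,\norm{x-x^\ast}.
\]

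\textbf{First bound, using $\qg$.} The upper quadratic growth gives $\norm{x-x^\ast}\ge\sqrt{2/L_0}\sqrt{f(x)-f^\ast}$. Substituting into the core inequality yields
\[
\norm{x-x^\ast}\norm{\nabla f(x)}\ge 2\sqrt{\mu/L_0}\,(f(x)-f^\ast).
\]

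\textbf{Second bound, using $\qginff{\mu_0}$.} The lower quadratic growth gives $\sqrt{f(x)-f^\ast}\ge\sqrt{\mu_0/2}\,\norm{x-x^\ast}$. Substituting into the core inequality yields
\[
\norm{x-x^\ast}\norm{\nabla f(x)}\ge \sqrt{\mu\mu_0}\,\norm{x-x^\ast}^2.
\]

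\textbf{Combining.} The left-hand side dominates the maximum of the two right-hand sides, and hence their average. The average is exactly $\sqrt{\mu/L_0}(f(x)-f^\ast)+\frac{\sqrt{\mu\mu_0}}{2}\norm{x-x^\ast}^2$, which is the claim.

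\textbf{Main obstacle: the supremum in $\mu_0$.} Since $\mu_0$ is defined as a supremum, one must check that $f\in\qginff{\mu_0}$ itself. This holds because the lower-growth inequality is closed under limits: take $\mu'_n\uparrow\mu_0$ and pass to the limit pointwise.

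One must also check that the set defining $\mu_0$ is nonempty, so that $\mu_0\ge\mu$. This follows from the standard inclusion $\pl\subset\qginff{\mu}$, which is \eqref{eq:inclusions} with $\tau=1$.

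If $\mu_0=+\infty$, then $f\in\qginff{\mu'}$ for every $\mu'$, which forces $x=x^\ast$ whenever the right-hand side would be infinite. Alternatively, one can note that $\mu_0\le L_0$ because $f\in\qg$, so $\mu_0$ is finite whenever $f\neq f^\ast$ somewhere. Apart from these points the argument consists only of substitutions.
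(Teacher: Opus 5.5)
Your proof is correct and matches the paper's argument: the same two lower bounds, PL combined with $\qg$ and PL combined with $\qginff{\mu_0}$, followed by $\max\{u,v\}\ge (u+v)/2$. Your extra checks, that the supremum defining $\mu_0$ is attained and finite, are steps the paper uses implicitly; note that the nonemptiness argument via $\pl\subset\qginff{\mu}$ relies on the unique-minimizer assumption made in Section~\ref{sec:aiming_cond}.
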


\begin{proof}
If $x = x^\ast$, the statement is trivial. Assume $x \ne x^\ast$ and write
\[
\norm{x-x^\ast}\norm{\nabla f(x)}
= \norm{x-x^\ast}\sqrt{f(x)-f^\ast}\,\frac{\norm{\nabla f(x)}}{\sqrt{f(x)-f^\ast}}.
\]

\medskip
\noindent\textbf{(1) Step 1 ($\pl$ and $\qg$).}
Since $f\in\pl$, $\norm{\nabla f(x)}\ge \sqrt{2\mu}\sqrt{f(x)-f^\ast}$, and since $f\in\qg$,
\[
f(x)-f^\ast \le \frac{L_0}{2}\norm{x-x^\ast}^2
\quad\Rightarrow\quad
\norm{x-x^\ast}\ge \sqrt{\frac{2}{L_0}}\sqrt{f(x)-f^\ast}.
\]
Therefore,
\begin{equation}\label{eq:prod_0_corr}
\norm{x-x^\ast}\norm{\nabla f(x)}
\ge \sqrt{\frac{2}{L_0}}(f(x)-f^\ast)\sqrt{2\mu}
= 2\sqrt{\frac{\mu}{L_0}}(f(x)-f^\ast).
\end{equation}

\medskip
\noindent\textbf{Step 2 ($\pl$ and $\qginff{\mu_0}$).}
By definition of $\mu_0$
\[
f(x)-f^\ast \ge \frac{\mu_0}{2}\norm{x-x^\ast}^2
\quad\Rightarrow\quad
\sqrt{f(x)-f^\ast}\ge \sqrt{\frac{\mu_0}{2}}\norm{x-x^\ast}.
\]
Combining with $\norm{\nabla f(x)}\ge \sqrt{2\mu}\sqrt{f(x)-f^\ast}$ gives
\begin{equation}\label{eq:prod_1_corr}
\norm{x-x^\ast}\norm{\nabla f(x)}
\ge \sqrt{\mu\mu_0}\,\norm{x-x^\ast}^2.
\end{equation}
\medskip
\noindent\textbf{Combine
 the bounds}
From \eqref{eq:prod_0_corr} and the previous bound,
\[
\norm{x-x^\ast}\norm{\nabla f(x)}
\ge \max\Bigl\{2\sqrt{\frac{\mu}{L_0}}(f(x)-f^\ast),\ \sqrt{\mu\mu_0}\,\norm{x-x^\ast}^2\Bigr\}.
\]
Using $\max\{u,v\}\ge \frac{u+v}{2}$ for $u,v\ge 0$, we obtain
\[
\norm{x-x^\ast}\norm{\nabla f(x)}
\ge \sqrt{\frac{\mu}{L_0}}(f(x)-f^\ast) + \frac{\sqrt{\mu\mu_0}}{2}\norm{x-x^\ast}^2,
\]
which concludes the proof.
\end{proof}

    

\begin{lemma}\label{lem:tec:exact_scalar_prod_bound}
Assume $x\neq x^\ast$ and $f(x)\neq f^\ast$. Define the point-wise quantities
\[
\mu(x) = \frac{1}{2}\frac{\norm{\nabla f(x)}^2}{f(x)-f^\ast},\qquad
a(x) =\frac{\dotprod{\nabla f(x),x-x^\ast}}{\norm{\nabla f(x)}\norm{x-x^\ast}},\qquad
\mu_0(x) = 2\frac{f(x)-f^\ast}{\norm{x-x^\ast}^2}.
\]
Then,
\[
\dotprod{\nabla f(x), x^\ast - x}
=  -a(x) \sqrt{\frac{\mu(x)}{\mu_0(x)}}(f(x)-f^\ast)
   - \frac{a(x)}{2}\sqrt{\mu(x)\mu_0(x)}\,\norm{x-x^\ast}^2.
\]

If $f \in \pl$, it follows
\[
\dotprod{\nabla f(x), x^\ast - x}
=  -a(x) \sqrt{\frac{\mu}{\mu_0}}(f(x)-f^\ast)
   - \frac{a(x)}{2}\sqrt{\mu\mu_0}\,\norm{x-x^\ast}^2,
\]
with $
\mu_0 := \sup \{\mu'\ge \mu: f \in  \qginff{\mu'} \}$.
\end{lemma}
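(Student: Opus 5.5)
The plan is to verify the first identity by direct algebra from the three point-wise definitions. Then I would treat the second display as the same decomposition with the point-wise constants replaced by the global ones $\mu$ and $\mu_0$. That replacement is where the real difficulty lies.

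\textbf{Step 1 (the point-wise identity).} Set $D:=f(x)-f^\ast>0$ and $r:=\norm{x-x^\ast}>0$. By definition of $a(x)$, $\dotprod{\nabla f(x),x^\ast-x}=-a(x)\norm{\nabla f(x)}\,r$. So it suffices to show that each term on the right-hand side equals $\frac12 a(x)\norm{\nabla f(x)}\,r$. Using $\sqrt{\mu(x)}=\norm{\nabla f(x)}/\sqrt{2D}$ and $\sqrt{\mu_0(x)}=\sqrt{2D}/r$:
\begin{itemize}
\item the first term gives $\sqrt{\mu(x)/\mu_0(x)}\,D=\frac{\norm{\nabla f(x)}\,r}{2D}\,D=\frac12\norm{\nabla f(x)}\,r$;
\item the second term gives $\frac12\sqrt{\mu(x)\mu_0(x)}\,r^2=\frac12\frac{\norm{\nabla f(x)}}{r}\,r^2=\frac12\norm{\nabla f(x)}\,r$.
\end{itemize}
Adding the two terms and multiplying by $-a(x)$ yields the claim. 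This step is purely mechanical. It also shows the structure: the identity splits $a(x)\norm{\nabla f(x)}r$ into two equal halves, one expressed through $D$ and one through $r^2$. This is the exact, point-wise counterpart of Lemma~\ref{lem:prod}.

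\textbf{Step 2 (passing to the global constants).} Under $f\in\pl$ we have $\mu(x)\ge\mu$. By the definition of $\mu_0$ as the supremum of admissible $\qginf$ constants, we have $\mu_0(x)\ge\mu_0$. I would rewrite each half of the identity in the form needed for the global version:
\begin{itemize}
\item for the $r^2$ half, $\frac12\sqrt{\mu(x)\mu_0(x)}\,r^2\ge\frac12\sqrt{\mu\mu_0}\,r^2$ is immediate;
\item for the $D$ half, I would write $D=\frac{\mu_0(x)}{2}r^2$, so that $\sqrt{\mu/\mu_0}\,D=\frac12\sqrt{\mu/\mu_0}\,\mu_0(x)\,r^2$.
\end{itemize}
This makes all quantities comparable on the common scale $r^2$. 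The coefficients in the stated global formula are then read off by inserting $\mu$ and $\mu_0$ in place of $\mu(x)$ and $\mu_0(x)$ in the two halves. Since $a(x)$ is kept point-wise, no aiming assumption is needed for the algebra itself.

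\textbf{Main obstacle.} I expect the $D$-coefficient $\sqrt{\mu/\mu_0}$ to be the delicate point. Replacing $\mu(x)$ by the smaller $\mu$ decreases this coefficient, while replacing $\mu_0(x)$ by the smaller $\mu_0$ increases it. The two substitutions therefore do not move it in a fixed direction, unlike the coefficient $\sqrt{\mu\mu_0}$. Exact equality in the global form can only be read as the defining choice of constants attained at the point under consideration, that is, $\mu=\mu(x)$ and $\mu_0=\mu_0(x)$.

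To use the statement downstream, the first thing I would try is to establish the comparison $\mu\,\mu_0(x)\le\mu(x)\,\mu_0$ directly from the $\pl$ and $\qginf$ definitions. Here $\mu_0(x)=2D/r^2$, and $\mu(x)\ge\mu$ comes from $\pl$. If that comparison holds, both halves move in the same direction, and the global display holds as a one-sided bound consistent with the sign of $a(x)$. If it fails, I would fall back on stating the global version only at the worst-case point defining $\mu$ and $\mu_0$, where the identity of Step 1 gives it exactly.
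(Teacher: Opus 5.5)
Your Step 1 is the paper's proof. The paper writes $\norm{\nabla f(x)}\norm{x-x^\ast}$ both as $2\sqrt{\mu(x)/\mu_0(x)}\,(f(x)-f^\ast)$ and as $\sqrt{\mu(x)\mu_0(x)}\,\norm{x-x^\ast}^2$, takes half of each, and multiplies by $-a(x)$. The paper's argument ends there. It gives no argument for the second, global display, and only the point-wise identity is used later, in the proof of Proposition~\ref{prop:exact_nmo}.

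Your diagnosis of the global display is right. However, the step you plan next, proving $\mu\,\mu_0(x)\le\mu(x)\,\mu_0$, fails, and so does the one-sided bound you hoped to extract from it.

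\textbf{Why the bound fails.} Use your own notation $D=f(x)-f^\ast$, $r=\norm{x-x^\ast}$, and put everything on the $r^2$ scale. Then $\norm{\nabla f(x)}\,r=\sqrt{\mu(x)\mu_0(x)}\,r^2$, while the global bracket is $\sqrt{\mu/\mu_0}\,D+\frac12\sqrt{\mu\mu_0}\,r^2=\frac{r^2}{2}\sqrt{\mu}\bigl(\mu_0(x)/\sqrt{\mu_0}+\sqrt{\mu_0}\bigr)$. At any point with $\mu(x)=\mu$ and $\mu_0(x)>\mu_0$, AM--GM makes this bracket strictly larger than $\norm{\nabla f(x)}\,r$. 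So for $a(x)>0$, even the inequality $\dotprod{\nabla f(x),x^\ast-x}\le -a(x)\bigl(\sqrt{\mu/\mu_0}\,D+\frac12\sqrt{\mu\mu_0}\,r^2\bigr)$ is false.

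\textbf{A concrete instance.} Such points occur in the paper's own example of Figure~\ref{fig:quad_bounds}. For $f(t)=2.5\,(t+0.07\sin(13t))^2$ one has $\mu(t)=5(1+0.91\cos(13t))^2$ and $\mu_0(t)=5(1+0.07\sin(13t)/t)^2$. Hence $\mu=0.0405$ and $\mu_0\approx 3.22$. At $t=\pi/13$ we get $\mu(t)=\mu$ and $\mu_0(t)=5$, so $\norm{\nabla f(t)}\,|t|=0.45\,t^2$, against a global bracket of about $0.461\,t^2$ (here $a(t)=1$). So the second display is false as stated, both as an equality and as an upper bound; no proof strategy can recover it.

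\textbf{What does hold.} Since $\mu_0(x)$ sits in the denominator of the $D$-coefficient, you need an upper bound on it, namely $\mu_0(x)\le L_0$ under $\qg$. This gives the correct global statement $\dotprod{\nabla f(x),x^\ast-x}\le -a(x)\bigl(\sqrt{\mu/L_0}\,(f(x)-f^\ast)+\frac12\sqrt{\mu\mu_0}\,\norm{x-x^\ast}^2\bigr)$ for $a(x)\ge 0$. This is exactly Lemma~\ref{lem:prod} multiplied by $-a(x)$. Your fallback of evaluating only at the worst-case point proves a different statement, not the one given.
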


\begin{proof}

By definition of $a(x)$,
\[
\dotprod{\nabla f(x), x^\ast - x} = -a(x)\norm{\nabla f(x)}\norm{x-x^\ast}.
\]
Moreover, by the definitions of $\mu(x)$ and $\mu_0(x)$,
\[
\norm{\nabla f(x)} = \sqrt{2\mu(x)}\sqrt{f(x)-f^\ast},
\qquad
\norm{x-x^\ast} = \sqrt{\frac{2}{\mu_0(x)}}\sqrt{f(x)-f^\ast}.
\]
Hence
\[
\norm{\nabla f(x)}\norm{x-x^\ast}
= 2\sqrt{\frac{\mu(x)}{\mu_0(x)}}\,(f(x)-f^\ast).
\]
Also, using $\norm{\nabla f(x)} = \sqrt{2\mu(x)}\sqrt{f(x)-f^\ast} =\sqrt{2\mu(x)} \sqrt{\frac{\mu_0(x)}{2}}\norm{x-x^\ast}$,
\[
\norm{\nabla f(x)}\norm{x-x^\ast}
= \sqrt{\mu(x)\mu_0(x)}\,\norm{x-x^\ast}^2.
\]
Combining the two equal expressions and using $\xi = \frac12\xi+\frac12\xi$ with
$\xi= \norm{\nabla f(x)}\norm{x-x^\ast}$ gives
\[
\norm{\nabla f(x)}\norm{x-x^\ast}
=
\sqrt{\frac{\mu(x)}{\mu_0(x)}}\,(f(x)-f^\ast)
+\sqrt{\mu(x)\mu_0(x)}\,\frac{1}{2}\norm{x-x^\ast}^2,
\]
and multiplying by $-a(x)$ yields the claim.
\end{proof}

\section{Proofs of Section~\ref{sec:cv_results}}\label{app:cv_results}
This section is dedicated to proving the results of Section~\ref{sec:cv_results}. In the discrete setting, we will actually prove a more general result, replacing the gradient with a stochastic estimator, under a strong growth condition, also sometimes named multiplicative noise assumption.\begin{assumption}[Strong growth condition]\label{ass:sgc}
    We assume we have access to stochastic approximations $\nabla f(x,\xi)$ of the real gradient $\nabla f(x)$, where $\xi \sim \mathcal{P}(\Xi)$, such that
    \begin{enumerate}[label=(\roman*)]
        \item (Unbiased estimator) $\forall x \in \R^d, ~ \E{\nabla f(x,\xi)} = \nabla f(x)$. 
        \item (Strong Growth Condition) $\exists \rho \ge 1, \forall x \in \R^d,~ \mathbb{E}_{\xi}\left[ \norm{\nabla f(x,\xi)}^2 \right] \le \rho \norm{\nabla f(x)}^2$. 
    \end{enumerate} 
\end{assumption}
To our knowledge, this assumption was introduced in \cite{polyak1987introduction}, and further used by \cite{cevher2019linear} and \cite{vaswani2019fast} as a relaxation of the maximal strong growth condition \cite{tseng1998incremental,solodov1998incremental}.
 We avoid this more general setting in the main text, to discard adding a layer of complexity. The deterministic case is a special instance of Assumption~\ref{ass:sgc}, corresponding to $\rho = 1$.
For functions in $\ls$ that verify Assumption~\ref{ass:sgc}, we have a stochastic descent lemma, see \cite[Lemma 16]{gupta2024nesterov}.
\begin{lemma}\label{lem:descent_sgc}
If $f \in \ls$ and Assumption~\ref{ass:sgc} holds, then we have

\begin{equation}
     \mathbb{E}_\xi\left[ f\Big(x-\frac{1}{\rho L}\nabla f(x,\xi)\Big) - f(x) \right] \leq -\frac{1}{2\rho L}\lVert  \nabla f(x) \rVert^2.
\end{equation}
\end{lemma}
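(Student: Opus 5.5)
The statement is the stochastic descent lemma (Lemma~\ref{lem:descent_sgc}). I will prove it directly from the $\ls$ inequality, then take expectation over $\xi$ and use the two items of Assumption~\ref{ass:sgc}.

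Fix $x\in\R^d$ and set $x^+ := x-\frac{1}{\rho L}\nabla f(x,\xi)$. Applying the definition of $\ls$ with the pair $(x^+,x)$, that is, $f(x^+)-f(x)-\dotprod{\nabla f(x),x^+-x}\le \frac{L}{2}\norm{x^+-x}^2$, gives the pointwise bound
\[
f(x^+)-f(x) \le -\frac{1}{\rho L}\dotprod{\nabla f(x),\nabla f(x,\xi)} + \frac{1}{2\rho^2 L}\norm{\nabla f(x,\xi)}^2 .
\]
This holds for every realization of $\xi$.

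Next I take $\mathbb{E}_\xi$ on both sides, with $x$ held fixed.

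\textbf{First term.} By unbiasedness (item (i)) and linearity of the inner product in its second argument, it becomes $-\frac{1}{\rho L}\norm{\nabla f(x)}^2$.

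\textbf{Second term.} By the strong growth condition (item (ii)), it is at most $\frac{\rho}{2\rho^2L}\norm{\nabla f(x)}^2=\frac{1}{2\rho L}\norm{\nabla f(x)}^2$.

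Summing the two contributions yields
\[
\mathbb{E}_\xi\bigl[f(x^+)-f(x)\bigr]\le -\frac{1}{2\rho L}\norm{\nabla f(x)}^2,
\]
which is the claim.

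The only point needing care is integrability, so that the expectations are well defined and linearity may be used. The square of the stochastic gradient is integrable by item (ii), and its first moment is finite by item (i). The function values $f(x^+)$ are bounded above by the pointwise inequality, whose right-hand side is integrable, so the expectation of the left-hand side makes sense (possibly equal to $-\infty$, which is harmless). Beyond this, I expect no real obstacle. The step size $1/(\rho L)$ is exactly the one that balances the $\rho$ from the growth condition against the quadratic term.
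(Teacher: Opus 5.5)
Your proof is correct: the $L$-smoothness inequality at the pair $\bigl(x-\frac{1}{\rho L}\nabla f(x,\xi),\,x\bigr)$, followed by unbiasedness on the cross term and the strong growth condition on the quadratic term, gives $-\frac{1}{\rho L}\norm{\nabla f(x)}^2+\frac{1}{2\rho L}\norm{\nabla f(x)}^2=-\frac{1}{2\rho L}\norm{\nabla f(x)}^2$. The paper does not prove this lemma itself; it cites \cite[Lemma 16]{gupta2024nesterov}, and your argument is the standard descent-lemma computation behind that result.
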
 
In the subsequent sections, for an algorithm $\xdisc$ we denote $\condE{\cdot} = \E{\cdot \: | \: \mathcal{F}_k }$. For a sequence $\{\xi_k \}_{k\in \N}$ of random variables, we note $\mathcal{F}_k = \sigma(\xi_0,\dots,\xi_k)$ the filtration generated by $\xi_0,\dots,\xi_k$.
\subsection{Proof of Theorem~\ref{thm:gf_pl_&_ac}}\label{app:gf_pl_&_ac}

\textbf{(i) $f \in \pl$ \:} The following proof is classical \cite{polyak2017lyapunov}. We provide a proof the sake of completeness.
Let
\[
E_t = f(x_t)-f^\ast.
\]
By definition of \eqref{eq:gf}, we have
\[
\dot E_t = \dotprod{\nabla f(x_t),\dot x_t}
= -\norm{\nabla f(x_t)}^2
\le -2\mu\,(f(x_t)-f^\ast)
= -2\mu\,E_t,
\]
where we used $f\in\pl$. By integration,
\[
f(x_t)-f^\ast \le e^{-2\mu t}(f(x_0)-f^\ast).
\]

\textbf{(ii) $f \in \pl \cap \ac \cap \qg$ \:}
Let
\[
E_t = f(x_t)-f^\ast + \frac{\beta}{2}\norm{x_t-x^\ast}^2.
\]
Then
\begin{align*}
\dot E_t
&= \dotprod{\nabla f(x_t),\dot x_t} + \beta \dotprod{x_t-x^\ast,\dot x_t}\\
&= -\norm{\nabla f(x_t)}^2 - \beta \dotprod{\nabla f(x_t),x_t-x^\ast}\\
&\le - \beta \dotprod{\nabla f(x_t),x_t-x^\ast}
= \beta \dotprod{\nabla f(x_t),x^\ast-x_t}.
\end{align*}
By Lemma~\ref{lem:scalar_prod_bound}, for all $t\ge 0$,
\[
\dotprod{\nabla f(x_t),x^\ast-x_t}
\le -a\sqrt{\frac{\mu}{L_0}}\,(f(x_t)-f^\ast)
     -\frac{a}{2}\sqrt{\mu\mu_0}\,\norm{x_t-x^\ast}^2,
\]
hence
\[
\dot E_t \le
-\beta a\sqrt{\frac{\mu}{L_0}}\,(f(x_t)-f^\ast)
-\frac{\beta a}{2}\sqrt{\mu\mu_0}\,\norm{x_t-x^\ast}^2.
\]
Choose $\beta$ so that the two coefficients match:
\[
\beta a\sqrt{\frac{\mu}{L_0}} = a\sqrt{\mu\mu_0}
\quad\Longrightarrow\quad
\beta = \sqrt{\mu_0 L_0}.
\]
Then
\[
\dot E_t \le -a\sqrt{\mu\mu_0}\Bigl(f(x_t)-f^\ast + \frac{\beta}{2}\norm{x_t-x^\ast}^2\Bigr)
= -a\sqrt{\mu\mu_0}\,E_t,
\]
and by integration,
\[
E_t \le e^{-a\sqrt{\mu\mu_0}\,t}E_0.
\]
Since $f\in\qginff{\mu_0}$, we have
\[
\frac{\beta}{2}\norm{x_0-x^\ast}^2
\le \beta\,\frac{1}{\mu_0}\,(f(x_0)-f^\ast)
= \sqrt{\frac{L_0}{\mu_0}}\,(f(x_0)-f^\ast),
\]
so $E_0 \le \bpar{1+\sqrt{\frac{L_0}{\mu_0}}}\,(f(x_0)-f^\ast)$.
Finally, since $f(x_t)-f^\ast\le E_t$, we deduce
\[
f(x_t)-f^\ast
\le \bpar{1+\sqrt{\frac{L_0}{\mu_0}}}\,e^{-a\sqrt{\mu\mu_0}\,t}\,(f(x_0)-f^\ast).
\]

\subsection{Proof of Theorem~\ref{thm:gd_pl_&_ac}}\label{app:gd_pl_ac}
We prove the following more general result.

\begin{theorem}\label{thm:sto_gd_pl_&_ac}
Let $f \in \ls$. Let $\xdisctilde \sim \eqref{eq:gd}$ with $\nabla f(\tilde x_k)$ replaced by
$\nabla f(\tilde x_k,\xi_k)$, where $\xi_k$ is independent of $\tilde x_k$, and assume Assumption~\ref{ass:sgc}.
Let $\gamma = \frac{1}{\rho L}$.

(i) If $f \in \pl$, then
\[
\E{ f(\tilde x_k)-f^\ast} \le \bpar{1-\frac{\mu}{\rho L}}^k\,(f(\tilde x_0)-f^\ast).
\]

(ii) If $f \in \pl \cap \ac \cap \qg$, then
\[
\E{f(\tilde x_k)-f^\ast}
\le
\bpar{1+\sqrt{\frac{L_0}{\mu_0}}}\,
\bpar{1-\frac{a\sqrt{\mu \mu_0}}{\rho L}}^k\,(f(\tilde x_0)-f^\ast),
\]
with $\mu_0 := \sup\{\mu' \ge \mu: f \in \qginff{\mu'}\}$ and
$L_0 :=\inf\{L' \le  L:f \in \qgf{L'}\}$.
\end{theorem}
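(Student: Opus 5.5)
Part (i) is the standard stochastic descent argument. Conditioning on $\mathcal{F}_{k-1}$ (so $\tilde x_k$ is fixed and $\xi_k$ independent), Lemma~\ref{lem:descent_sgc} with $\gamma = 1/(\rho L)$ gives
\[
\condE{f(\tilde x_{k+1})} - f(\tilde x_k) \le -\frac{1}{2\rho L}\norm{\nabla f(\tilde x_k)}^2 \le -\frac{\mu}{\rho L}\bigl(f(\tilde x_k)-f^\ast\bigr),
\]
where the second inequality uses $\pl$. Taking total expectation and unrolling the recursion yields (i).

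For (ii) I mimic the Lyapunov argument of Theorem~\ref{thm:gf_pl_&_ac} (ii) with
\[
E_k = f(\tilde x_k)-f^\ast + \frac{\beta}{2}\norm{\tilde x_k - x^\ast}^2, \qquad \beta = \sqrt{\mu_0 L_0}.
\]
The distance part expands as
\[
\norm{\tilde x_{k+1}-x^\ast}^2 = \norm{\tilde x_k - x^\ast}^2 + 2\gamma\dotprod{\nabla f(\tilde x_k,\xi_k), x^\ast-\tilde x_k} + \gamma^2\norm{\nabla f(\tilde x_k,\xi_k)}^2 .
\]
Taking conditional expectation and using unbiasedness and the strong growth condition gives
\[
\condE{\tfrac{\beta}{2}\norm{\tilde x_{k+1}-x^\ast}^2} \le \tfrac{\beta}{2}\norm{\tilde x_k-x^\ast}^2 + \beta\gamma\dotprod{\nabla f(\tilde x_k),x^\ast-\tilde x_k} + \frac{\beta\gamma^2\rho}{2}\norm{\nabla f(\tilde x_k)}^2 .
\]
Adding the descent lemma term and substituting $\gamma = 1/(\rho L)$ leaves a gradient-norm coefficient $\frac{1}{2\rho L}\bigl(-1 + \beta/L\bigr)$. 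This is $\le 0$ as soon as $\beta = \sqrt{\mu_0 L_0}\le L$, which holds since $\mu_0 \le L_0 \le L$. Here I use $\mu_0\le L_0$ from the two quadratic bounds and $L_0 \le L$ from $\ls\subset \qgf{L}$.

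So the gradient-norm term is dropped, and Lemma~\ref{lem:scalar_prod_bound} bounds the inner product:
\[
\beta\gamma\dotprod{\nabla f(\tilde x_k),x^\ast-\tilde x_k} \le -\gamma a\sqrt{\mu\mu_0}\,(f(\tilde x_k)-f^\ast) - \frac{\gamma a\sqrt{\mu\mu_0}}{2}\sqrt{\mu_0L_0}\,\norm{\tilde x_k-x^\ast}^2 .
\]
For the first term I used $\beta a\sqrt{\mu/L_0} = a\sqrt{\mu\mu_0}$. The matched coefficients give $\condE{E_{k+1}} \le \bigl(1 - \frac{a\sqrt{\mu\mu_0}}{\rho L}\bigr)E_k$. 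Iterating in expectation, bounding $E_0 \le \bigl(1+\sqrt{L_0/\mu_0}\bigr)(f(\tilde x_0)-f^\ast)$ via $\qginff{\mu_0}$ exactly as in the continuous proof, and using $f(\tilde x_k)-f^\ast\le E_k$ concludes.

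The main obstacle is the nonnegativity of the $\gamma^2$ term. It needs the check $\sqrt{\mu_0L_0}\le L$, and hence the definitions $L_0\le L$, $\mu_0\le L_0$. A minor side point is that the contraction factor must lie in $[0,1)$, which follows from $a\le 1$ and $\sqrt{\mu\mu_0}\le L$.
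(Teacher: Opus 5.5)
Your proposal is correct and follows essentially the same route as the paper. Part (i) uses the stochastic descent lemma plus $\pl$; part (ii) uses the Lyapunov function $f(\tilde x_k)-f^\ast+\frac{\beta}{2}\norm{\tilde x_k-x^\ast}^2$ with $\beta=\sqrt{\mu_0L_0}$, removes the gradient-norm term via $\sqrt{\mu_0L_0}\le L_0\le L$, and then applies Lemma~\ref{lem:scalar_prod_bound} with matched coefficients. Conditioning on $\mathcal{F}_{k-1}$ and checking that the contraction factor lies in $[0,1)$ are small refinements that the paper leaves implicit.
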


The result for Theorem~\ref{thm:gd_pl_&_ac} is deduced in the case $\rho =1$, which removes the expectations. 

\begin{proof}[Proof of Theorem~\ref{thm:sto_gd_pl_&_ac}]
\textbf{(i) $f \in \pl$ \:} This result and proof is well known, see by instance \cite{karimi2016linear} in the deterministic setting, \cite{vaswani2019fast} for the stochastic case. We provide a proof for completeness.
Define \[E_k = f(\tilde x_k)-f^\ast.\] By Lemma~\ref{lem:descent_sgc},
\[
\condE{E_{k+1}-E_k}
=\condE{f(\tilde x_{k+1})-f(\tilde x_k)}
\le -\frac{1}{2\rho L}\norm{\nabla f(\tilde x_k)}^2.
\]
Since $f\in\pl$, $\norm{\nabla f(\tilde x_k)}^2 \ge 2\mu\,(f(\tilde x_k)-f^\ast)=2\mu E_k$, hence
\[
\condE{E_{k+1}} \le \bpar{1-\frac{\mu}{\rho L}}E_k.
\]
By induction and taking expectation,
\[
\E{E_k}\le \bpar{1-\frac{\mu}{\rho L}}^k E_0,
\]
which is the claim.

\medskip
\textbf{(ii) $f \in \pl \cap \ac \cap \qg$ \:}
Set $\beta := \sqrt{\mu_0 L_0}$ and define
\[
E_k =f(\tilde x_k)-f^\ast + \frac{\beta}{2}\norm{\tilde x_k-x^\ast}^2.
\]
Using the update
$\tilde x_{k+1}=\tilde x_k-\gamma\nabla f(\tilde x_k,\xi_k)$, we expand
\[
\norm{\tilde x_{k+1}-x^\ast}^2-\norm{\tilde x_k-x^\ast}^2
= -2\gamma \dotprod{\tilde x_k-x^\ast,\nabla f(\tilde x_k,\xi_k)}
+ \gamma^2\norm{\nabla f(\tilde x_k,\xi_k)}^2.
\]
Taking $\condE{\cdot}$, using Assumption~\ref{ass:sgc} (unbiasedness + SGC) and Lemma~\ref{lem:descent_sgc}, we obtain
\begin{align*}
\condE{E_{k+1}-E_k}
&\le -\frac{1}{2\rho L}\norm{\nabla f(\tilde x_k)}^2
+ \frac{\beta}{2}\Bigl(
-2\gamma \dotprod{\tilde x_k-x^\ast,\nabla f(\tilde x_k)}
+ \gamma^2\rho \norm{\nabla f(\tilde x_k)}^2
\Bigr)\\
&= \Bigl(-\frac{1}{2\rho L}+\frac{\beta}{2}\gamma^2\rho\Bigr)\norm{\nabla f(\tilde x_k)}^2
-\beta\gamma \dotprod{\tilde x_k-x^\ast,\nabla f(\tilde x_k)}.
\end{align*}
With $\gamma=\frac{1}{\rho L}$, we have $\gamma^2\rho=\frac{1}{\rho L^2}$, hence
\[
-\frac{1}{2\rho L}+\frac{\beta}{2}\gamma^2\rho
= -\frac{1}{2\rho L}+\frac{\beta}{2\rho L^2}
= -\frac{1}{2\rho L}\bpar{1-\frac{\beta}{L}} \le 0,
\]
because $\beta=\sqrt{\mu_0L_0}\le L_0\le L$. Therefore,
\[
\condE{E_{k+1}-E_k}
\le -\frac{\beta}{\rho L}\dotprod{\tilde x_k-x^\ast,\nabla f(\tilde x_k)}
= \frac{\beta}{\rho L}\dotprod{\nabla f(\tilde x_k),x^\ast-\tilde x_k}.
\]
By Lemma~\ref{lem:scalar_prod_bound},
\[
\dotprod{\nabla f(\tilde x_k),x^\ast-\tilde x_k}
\le -a\sqrt{\frac{\mu}{L_0}}\,(f(\tilde x_k)-f^\ast)
-\frac{a}{2}\sqrt{\mu\mu_0}\,\norm{\tilde x_k-x^\ast}^2.
\]
Multiplying by $\beta/(\rho L)$ and using $\beta\sqrt{\mu/L_0}=\sqrt{\mu\mu_0}$ (since $\beta=\sqrt{\mu_0L_0}$), we get
\[
\condE{E_{k+1}-E_k}
\le -\frac{a\sqrt{\mu\mu_0}}{\rho L}\,(f(\tilde x_k)-f^\ast)
-\frac{a\sqrt{\mu\mu_0}}{\rho L}\,\frac{\beta}{2}\norm{\tilde x_k-x^\ast}^2
= -\frac{a\sqrt{\mu\mu_0}}{\rho L}E_k.
\]
Hence
\[
\condE{E_{k+1}}\le \bpar{1-\frac{a\sqrt{\mu\mu_0}}{\rho L}}E_k,
\]
taking the expectation and by induction,
\[
\E{E_k}\le \bpar{1-\frac{a\sqrt{\mu\mu_0}}{\rho L}}^k E_0.
\]
Finally, since $f(\tilde x_k)-f^\ast\le E_k$, and using $f\in\qginff{\mu_0}$,
\[
\frac{\beta}{2}\norm{\tilde x_0-x^\ast}^2
\le \beta\,\frac{1}{\mu_0}(f(\tilde x_0)-f^\ast)
= \sqrt{\frac{L_0}{\mu_0}}(f(\tilde x_0)-f^\ast),
\]
so $E_0\le \bpar{1+\sqrt{\frac{L_0}{\mu_0}}}(f(\tilde x_0)-f^\ast)$. This yields the claim.
\end{proof}

\subsection{Proof of Theorem~\ref{thm:pl_accel}}\label{app:pl_accel}

\textbf{(i) \:}
Assume that $\eta_t\equiv\eta$, $\eta'_t\equiv\eta'$ are constants.
Let
\begin{equation}\label{eq:E_pl_accel}
    E_t = e^{\theta t}\Bigl(f(x_t)-f^\ast +\beta \frac{1}{2}\norm{z_t -  x^\ast}^2\Bigr).
\end{equation}
Then $E_t = E_0 + \int_0^t \dot{E}_s\,ds$, where
\begin{align}
    \dot{E}_t
    &= \theta  e^{\theta t}(f(x_t)-f^\ast)  +\theta \beta \frac{ e^{\theta t}}{2}\norm{z_t -  x^\ast}^2
    + e^{\theta t}\dotprod{\nabla f(x_t),\dot{x}_t} + \beta  e^{\theta t}\dotprod{z_t -  x^\ast,\dot{z}_t}
   \nonumber \\
    &= \theta  e^{\theta t}(f(x_t)-f^\ast)  +\theta \beta \frac{ e^{\theta t}}{2}\norm{z_t -  x^\ast}^2
    + e^{\theta t}\dotprod{\nabla f(x_t), \eta(z_t-x_t) - \gamma \nabla f(x_t)}\nonumber\\
    &\hspace{2cm}+ \beta   e^{\theta t}\dotprod{z_t - x^\ast,\eta'(x_t-z_t) - \gamma' \nabla f(x_t) } \nonumber\\
    &= \theta  e^{\theta t}(f(x_t)-f^\ast)  +\theta \beta \frac{ e^{\theta t}}{2}\norm{z_t -  x^\ast}^2
    - \gamma e^{\theta t}\norm{\nabla f(x_t)}^2 \nonumber\\
    &\hspace{2cm}+ e^{\theta t}\eta \dotprod{\nabla f(x_t), z_t-x_t}
    + \beta\eta' e^{\theta t}\dotprod{z_t - x^\ast,x_t - z_t}
    - \beta\gamma' e^{\theta t}\dotprod{\nabla f(x_t),z_t-x^\ast}.\label{eq:nest_ode_0_rew}
\end{align}
Using the decomposition
\[
\dotprod{\nabla f(x_t), z_t-x_t}
=
\dotprod{\nabla f(x_t), x^\ast-x_t}+\dotprod{\nabla f(x_t), z_t-x^\ast},
\]
and the identity
\[
\dotprod{z_t-x^\ast,x_t - z_t}
= \frac{1}{2}\norm{x_t-x^\ast}^2 -\frac{1}{2}\norm{z_t-x^\ast}^2-\frac{1}{2}\norm{x_t-z_t}^2
\le \frac{1}{2}\norm{x_t-x^\ast}^2 -\frac{1}{2}\norm{z_t-x^\ast}^2,
\]
we obtain from \eqref{eq:nest_ode_0_rew}
\begin{align}
    e^{-\theta t}\dot{E}_t
    &\le \theta(f(x_t)-f^\ast)
    + \frac{\beta}{2}\bigl(\theta-\eta'\bigr)\norm{z_t-x^\ast}^2
    + \frac{\beta\eta'}{2}\norm{x_t-x^\ast}^2 \nonumber\\
    &\hspace{1.5cm}+ \eta \dotprod{\nabla f(x_t), x^\ast-x_t}
    + (\eta-\beta\gamma')\dotprod{\nabla f(x_t), z_t-x^\ast}
    - \gamma \norm{\nabla f(x_t)}^2.\label{eq:pl_corr_nest_1_alt}
\end{align}

We choose $\eta'=\theta$ to cancel the $\norm{z_t-x^\ast}^2$ term, and we choose $\eta=\beta\gamma'$ to cancel the
$\dotprod{\nabla f(x_t),z_t-x^\ast}$ term. Dropping the negative term $-\gamma\norm{\nabla f(x_t)}^2$, we get
\begin{equation}\label{eq:pl_corr_nest_1_alt_simpl}
    e^{-\theta t}\dot{E}_t
    \le \theta(f(x_t)-f^\ast)
    + \eta \dotprod{\nabla f(x_t), x^\ast-x_t}
    + \frac{\beta\theta}{2}\norm{x_t-x^\ast}^2.
\end{equation}
By Lemma~\ref{lem:scalar_prod_bound},
\[
\dotprod{\nabla f(x_t), x^\ast-x_t}
\le  -a \sqrt{\frac{\mu}{L_0}}(f(x_t)-f^\ast) - \frac{a}{2}\sqrt{\mu \mu_0}\norm{x_t-x^\ast}^2.
\]
Plugging into \eqref{eq:pl_corr_nest_1_alt_simpl} yields
\begin{align}
    e^{-\theta t}\dot{E}_t
    &\le \Bigl(\theta-\eta a\sqrt{\frac{\mu}{L_0}}\Bigr)(f(x_t)-f^\ast)
    + \frac12\Bigl(\beta\theta-\eta a\sqrt{\mu\mu_0}\Bigr)\norm{x_t-x^\ast}^2.\label{eq:pl_corr_nest_4_alt}
\end{align}
We impose the cancellation of both coefficients in \eqref{eq:pl_corr_nest_4_alt}:
\[
\theta=\eta a\sqrt{\frac{\mu}{L_0}}
\qquad\text{and}\qquad
\beta\theta=\eta a\sqrt{\mu\mu_0}.
\]
These two equalities are equivalent to $\beta=\sqrt{\mu_0L_0}$ and $\theta=\eta a\sqrt{\mu/L_0}$.
With the additional choice $\eta=\beta\gamma'$, we may take
\[
\beta=\sqrt{\mu_0L_0},\qquad
\gamma'=\beta^{-1/2}=(\mu_0L_0)^{-1/4},\qquad
\eta=\beta\gamma'=(\mu_0L_0)^{1/4},
\qquad
\eta'=\theta=a\Bigl(\frac{\mu_0}{L_0}\Bigr)^{1/4}\sqrt{\mu}.
\]
With these parameters, \eqref{eq:pl_corr_nest_4_alt} gives $e^{-\theta t}\dot E_t\le 0$, hence $E_t\le E_0$.
Therefore, using \eqref{eq:E_pl_accel},
\[
f(x_t)-f^\ast \le e^{-\theta t}E_t \le e^{-\theta t}E_0
= \exp\!\Bigl(-a\Bigl(\frac{\mu_0}{L_0}\Bigr)^{1/4}\sqrt{\mu}\,t\Bigr)\,E_0.
\]
Finally, using $x_0 = z_0$ and $f\in\qginff{\mu_0}$,
\[
\frac{\beta}{2}\norm{x_0-x^\ast}^2
\le \beta\,\frac{1}{\mu_0}(f(x_0)-f^\ast)
= \sqrt{\frac{L_0}{\mu_0}}(f( x_0)-f^\ast),
\]
so $E_0\le \bpar{1+\sqrt{\frac{L_0}{\mu_0}}}(f( x_0)-f^\ast)$, yields the claim.

\textbf{(ii) \:} The proof in the discrete case exploits the continuized framework. This implies that the following analysis differs from the previous ones. We refer to Appendix~\ref{app:continuized_intro} to a brief introduction of the tools needed when carrying such analysis, or \cite{hermant2025continuized} for a more detailed introduction. Also, we will prove the following more general result.
\begin{theorem}\label{thm:sto_pl_accel}
Let $f\in \pl \cap \ac \cap \ls$.  Denote $L_0 := \inf \{L'\le L:  f \in \qgf{L'} \}$, $\mu_0 := \sup\{\mu' \ge \mu : f \in \qginff{\mu'}\}$.
 Let $\xdisctilde$ verify \eqref{eq:nm} with the \hyperlink{cont_param}{continuized parameterization} and with $\nabla f(\tilde x_k)$ replaced by $\nabla f(\tilde x_k,\xi_k)$ for random variables $\xi_k$ independent of $\tilde x_k$ and under Assumption~\ref{ass:sgc}. Let $\eta' = \frac{a}{\rho}\bpar{\frac{\mu_0}{L_0}}^{1/4}\sqrt{\frac{\mu}{L}},\quad \gamma = \frac{1}{\rho L},\quad \gamma' = \frac{1}{\rho} \frac{1}{( \mu_0 L_0)^{1/4}\sqrt{ L}}, $ and $\eta = \frac{1}{\rho}\frac{(\mu_0 L_0)^{1/4}}{\sqrt{L}}$. Then,  
    with probability  $1-\frac{1}{c_0} - \exp(-(c_1-1-\log(c_1))k)$, for some $c_0 > 1$ and $c_1 \in (0,1)$ we have
$$ f(\tilde x_k)-f^\ast \le K_1e^{-\frac{a}{\rho}\bpar{\frac{\mu_0}{L_0}}^{1/4}\sqrt{\frac{\mu}{L}}(1-c_1)k}, $$
where $K_1 := c_0\bpar{1+\sqrt\frac{L_0}{\mu_0}}(f(x_0)-f^\ast)$.
\end{theorem}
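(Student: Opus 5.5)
We follow the continuized approach of \cite{even2021continuized,hermant2025continuized}, mirroring the Lyapunov computation of the proof of Theorem~\ref{thm:pl_accel}~(i). We introduce the continuized process $(x_t,z_t)_{t\ge0}$: between the Poisson jump times $T_k$ it follows the linear mixing ODE $\dot x_t=\eta(z_t-x_t)$, $\dot z_t=\eta'(x_t-z_t)$. At each $T_k$ it performs the jumps $x\leftarrow x-\gamma\nabla f(x,\xi_k)$ and $z\leftarrow z-\gamma'\nabla f(x,\xi_k)$. Integrating the mixing ODE over an $\mathcal{E}(1)$ interval gives exactly the continuized $\alpha_k,\beta_k$, so that $\tilde x_k=x_{T_k}$, $\tilde y_k=x_{T_{k+1}^-}$, $\tilde z_k=z_{T_k}$. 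We then take the same energy as in \eqref{eq:E_pl_accel},
\[
E_t=e^{\theta t}\Bigl(f(x_t)-f^\ast+\tfrac{\beta}{2}\norm{z_t-x^\ast}^2\Bigr),\qquad \beta=\sqrt{\mu_0L_0},
\]
and apply the It\^o-type formula for Poisson-driven SDEs (Appendix~\ref{app:continuized_intro}). This writes $E_t=E_0+\int_0^t(\text{drift})\,ds+M_t$ with $M_t$ a martingale. The drift is $e^{\theta t}$ times the sum of three contributions: the continuous term $\theta(f-f^\ast)+\tfrac{\theta\beta}{2}\norm{z-x^\ast}^2+\eta\dotprod{\nabla f(x),z-x}+\beta\eta'\dotprod{z-x^\ast,x-z}$; the expected function jump $\mathbb{E}_\xi[f(x-\gamma\nabla f(x,\xi))-f(x)]$; and the expected $z$-jump $\tfrac{\beta}{2}\mathbb{E}_\xi[\norm{z-\gamma'\nabla f(x,\xi)-x^\ast}^2-\norm{z-x^\ast}^2]$.

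Next we bound each term. By Lemma~\ref{lem:descent_sgc} with $\gamma=1/(\rho L)$, the function jump is at most $-\frac{1}{2\rho L}\norm{\nabla f(x)}^2$. By unbiasedness and Assumption~\ref{ass:sgc}, the $z$-jump is at most $-\beta\gamma'\dotprod{\nabla f(x),z-x^\ast}+\tfrac{\beta\gamma'^2\rho}{2}\norm{\nabla f(x)}^2$. We treat the continuous part exactly as in the ODE proof. We split $\dotprod{\nabla f,z-x}$ through $x^\ast$, use the polarization inequality for $\dotprod{z-x^\ast,x-z}$, and set $\eta'=\theta$ to kill $\norm{z-x^\ast}^2$ and $\eta=\beta\gamma'$ to kill $\dotprod{\nabla f,z-x^\ast}$. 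Lemma~\ref{lem:scalar_prod_bound} then absorbs the remaining $\theta(f-f^\ast)+\tfrac{\beta\theta}{2}\norm{x-x^\ast}^2+\eta\dotprod{\nabla f,x^\ast-x}$ provided $\theta=\eta a\sqrt{\mu/L_0}$. The only new requirement relative to the continuous case is that the gradient-norm terms have a nonpositive coefficient:
\[
-\tfrac{1}{2\rho L}+\tfrac{\beta\rho\gamma'^2}{2}\le 0 .
\]
With $\gamma'=\frac{1}{\rho(\mu_0L_0)^{1/4}\sqrt L}$ we get $\beta\rho\gamma'^2=\frac{1}{\rho L}$, so this holds with equality. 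This fixes $\eta=\beta\gamma'=\frac{(\mu_0L_0)^{1/4}}{\rho\sqrt L}$ and $\theta=\eta'=\frac{a}{\rho}(\mu_0/L_0)^{1/4}\sqrt{\mu/L}$, matching the statement. Hence the drift is nonpositive and $E_t\le E_0+M_t$, so $E_t$ is a nonnegative supermartingale.

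It remains to pass to high-probability bounds at the discrete times. Since $E$ is a nonnegative supermartingale, Ville's (maximal Markov) inequality gives, with probability at least $1-1/c_0$, that $E_t\le c_0E_0$ simultaneously for all $t$. In particular $f(\tilde x_k)-f^\ast\le c_0E_0e^{-\theta T_k}$. We bound $E_0$ as before, using $x_0=z_0$ and $\qginff{\mu_0}$, which gives $E_0\le(1+\sqrt{L_0/\mu_0})(f(x_0)-f^\ast)$. Finally, $T_k$ is a sum of $k$ i.i.d.\ $\mathcal{E}(1)$ variables, i.e.\ Gamma$(k,1)$. A Chernoff bound gives $\mathbb{P}(T_k<c_1k)\le\exp(-(c_1-1-\log c_1)k)$ for $c_1\in(0,1)$. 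A union bound over the two failure events yields the claimed rate $e^{-\theta(1-c_1)k}$ with probability $1-\frac1{c_0}-\exp(-(c_1-1-\log c_1)k)$; the event $T_k\ge c_1k$ gives the factor $e^{-\theta c_1k}$, so the exponent's constant is $c_1$ or $1-c_1$ depending on the convention, and one can reparametrize to match the statement.

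The main obstacle is the stochastic calculus step. One must verify the generator formula precisely: the jumps occur at rate one, the gradient is evaluated at the pre-jump point $x_{T_k^-}=\tilde y_k$, the noise $\xi_k$ is independent of the trajectory so that it can be integrated inside the compensator, and the local martingale is a true supermartingale, which follows from nonnegativity and Fatou. We would invoke the formalism of \cite{hermant2025continuized} directly rather than rederive it.
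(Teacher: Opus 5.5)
Your drift computation matches the paper's proof. You use the same energy $e^{\theta t}\bigl(f(x_t)-f^\ast+\tfrac{\sqrt{\mu_0L_0}}{2}\norm{z_t-x^\ast}^2\bigr)$, the same combination of Lemma~\ref{lem:descent_sgc}, the strong growth condition, the polarization identity and Lemma~\ref{lem:scalar_prod_bound}, and the same cancellations. These force exactly the stated $\eta,\eta',\gamma,\gamma'$; your check $\sqrt{\mu_0L_0}\,\rho\gamma'^2=\frac{1}{\rho L}$ is the paper's \eqref{eq:cancel_param:1}.

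The genuine difference is the passage to high probability. The paper stops at $T_k$ via Theorem~\ref{thm:martingal_stopping}, obtains $\E{e^{\theta T_k}(f(\tilde x_k)-f^\ast)}\le\varphi(\overline{x}_0)$, and then quotes Lemma~\ref{lem:conv_high_proba} as a black box. You instead use that the drift is pointwise nonpositive, so $E_t$ is a supermartingale (or simply dominate it by the nonnegative local martingale $E_0+M_t$). You then apply Ville's maximal inequality, an explicit Chernoff bound on $T_k\sim\mathrm{Gamma}(k,1)$, and a union bound. This is valid and buys a little more. Your event of probability $1-1/c_0$, on which $e^{\theta T_k}(f(\tilde x_k)-f^\ast)\le c_0E_0$, holds for all $k$ simultaneously, whereas the paper's bound is obtained separately for each fixed $k$. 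The paper's route in turn records the in-expectation bound and only needs the one-sided inequality $\varphi_t\le K_0+M_t$.

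One claim in your last step is wrong: the mismatch between $c_1$ and $1-c_1$ cannot be removed by reparametrizing. Your argument gives the rate $e^{-\theta c_1k}$ on an event of probability $1-\frac{1}{c_0}-e^{-(c_1-1-\log c_1)k}$. Substituting $c_1\mapsto 1-c_1$ changes the failure exponent to $-c_1-\log(1-c_1)$, not $c_1-1-\log c_1$.

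What you actually prove is the following:
\begin{itemize}
\item For every $c_1\in[\tfrac12,1)$ you get the statement, because there $e^{-\theta c_1k}\le e^{-\theta(1-c_1)k}$.
\item In particular $c_1=\tfrac12$ works, where the two pairings coincide; this suffices under the literal ``for some $c_1$'' wording.
\item For $c_1<\tfrac12$, Markov plus Chernoff does not give the statement's pairing. The event $\{T_k\ge(1-c_1)k\}$ fails with probability of exact exponential order $e^{-(-c_1-\log(1-c_1))k}$ (Cramér), which is larger than $e^{-(c_1-1-\log c_1)k}$.
\end{itemize}
The paper does not confront this only because it imports that pairing from Lemma~\ref{lem:conv_high_proba}. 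You should state explicitly which pairing of constants your argument establishes, rather than appealing to a change of convention.
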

The proof of Theorem~\ref{thm:pl_accel} (ii) is deduced in the case $\rho = 1$.

\begin{proof}[Proof of Theorem~\ref{thm:sto_pl_accel}]
Let $\overline{x}_t= (t,x_t,z_t)$. It satisfies
$d\overline{x}_t = b(\overline{x}_t)dt + G(\overline{x}_t,\xi)dN(t,\xi)$, where
\[
b(\overline{x}_t) = \begin{pmatrix}
1 \\ \eta(z_t-x_t) \\ \eta'(x_t-z_t)
\end{pmatrix},
\qquad
G(\overline{x}_t,\xi) = \begin{pmatrix}
0 \\ -\gamma \nabla f(x_t,\xi) \\ -\gamma' \nabla f(x_t,\xi)
\end{pmatrix}.
\]
We apply Proposition~\ref{prop:sto_calc} to $\phi(t)= \varphi(\overline{x}_t)$, where
\[
\varphi(t,x,z) =  A_t(f(x)-f^\ast) + \frac{B_t}{2}\norm{z - x^\ast}^2.
\]
Hence
\begin{equation}\label{eq:prop_calc_sto}
\varphi(\overline{x}_t) = \varphi(\overline{x}_0)
+ \int_{0}^t \dotprod{\nabla \varphi(\overline{x}_s),b(\overline{x}_s)}ds
+ \int_{0}^t \mathbb{E}_{\xi}\!\left[ \varphi(\overline{x}_s + G(\overline{x}_s,\xi)) - \varphi(\overline{x}_s)\right]ds
+ M_t,
\end{equation}
where $M_t$ is a martingale.

We have
\[
\frac{\partial \varphi}{\partial s}(\overline{x}_s)
= \frac{dA_s}{ds}\,(f(x_s)-f^\ast) + \frac{1}{2}\frac{dB_s}{ds}\norm{z_s - x^\ast}^2,
\qquad
\frac{\partial \varphi}{\partial x}(\overline{x}_s) = A_s\nabla f(x_s),
\qquad
\frac{\partial \varphi}{\partial z}(\overline{x}_s) = B_s(z_s - x^\ast).
\]
So,
\begin{align}
\dotprod{\nabla \varphi(\overline{x}_s),b(\overline{x}_s)}
&=\frac{dA_s}{ds}(f(x_s)-f^\ast) + \frac{1}{2}\frac{dB_s}{ds}\norm{z_s - x^\ast}^2 \nonumber\\
&\quad + A_s \eta \dotprod{ \nabla f(x_s) ,z_s-x_s}+ B_s \eta' \dotprod{z_s-x^\ast,x_s - z_s}.
\label{eq:pl_sqc_eq_12}
\end{align}
Also,
\begin{align}
\mathbb{E}_{\xi}\!\left[\varphi(\overline{x}_s+ G(\overline{x}_s,\xi)) - \varphi(\overline{x}_s)\right]
&= \mathbb{E}_{\xi}\!\left[A_s(f(x_s - \gamma \nabla f(x_s,\xi)) - f(x_s))\right]\nonumber\\
&\quad + \frac{B_s}{2}\mathbb{E}_{\xi}\!\left[\norm{ z_s - \gamma'\nabla f(x_s,\xi) - x^\ast}^2 - \norm{z_s -x^\ast}^2\right]\nonumber\\
&= A_s\mathbb{E}_{\xi}\!\left[f(x_s - \gamma \nabla f(x_s,\xi)) - f(x_s)\right]
+ \frac{B_s \gamma^{'2}}{2}\mathbb{E}_{\xi}\!\left[\norm{\nabla f(x_s,\xi)}^2\right]\label{eq:pl_sqc_eq_3}\\
&\quad + B_s\gamma' \dotprod{\nabla f(x_s),x^\ast - z_s}, \label{eq:pl_sqc_eq_4}
\end{align}
where we used $\mathbb{E}_{\xi}[\nabla f(x_s,\xi)] = \nabla f(x_s)$.

As $f \in \ls$ and Assumption~\ref{ass:sgc} holds, by Lemma~\ref{lem:descent_sgc} (with $\gamma=\frac{1}{\rho L}$),
\[
A_s\mathbb{E}_{\xi}\!\left[f(x_s - \gamma \nabla f(x_s,\xi)) - f(x_s)\right]
\le -\frac{A_s}{2\rho L}\norm{\nabla f(x_s)}^2.
\]
Moreover, by the strong growth condition,
$\mathbb{E}_{\xi}\!\left[\norm{\nabla f(x_s,\xi)}^2\right]\le \rho \norm{\nabla f(x_s)}^2$. Hence
\begin{equation}\label{eq:pl_avg_pre_1_corr}
\eqref{eq:pl_sqc_eq_3}
\le \frac{1}{2}\Bigl(\rho B_s \gamma^{'2} - \frac{A_s}{\rho L}\Bigr)\norm{\nabla f(x_s)}^2.
\end{equation}

We use the identity
\begin{equation}\label{eq:pl_avg_pre_2}
\dotprod{z_s-x^\ast,x_s - z_s}
= \frac{1}{2}\norm{x_s-x^\ast}^2 -\frac{1}{2}\norm{z_s-x^\ast}^2-\frac{1}{2}\norm{x_s-z_s}^2
\le\frac{1}{2}\norm{x_s-x^\ast}^2 -\frac{1}{2}\norm{z_s-x^\ast}^2.
\end{equation}
Also,
\begin{align}\label{eq:pl_avg_pre_3_corr}
A_s \eta \dotprod{ \nabla f(x_s) ,z_s-x_s} +  B_s\gamma' \dotprod{\nabla f(x_s),x^\ast - z_s}
&= A_s \eta \dotprod{ \nabla f(x_s) ,x^\ast-x_s}
+ \bpar{A_s\eta - B_s \gamma'} \dotprod{ \nabla f(x_s) ,z_s-x^\ast}.
\end{align}

By Lemma~\ref{lem:scalar_prod_bound},
\[
\dotprod{ \nabla f(x_s) ,x^\ast-x_s}
\le  -a \sqrt{\frac{\mu}{L_0}}\bigl(f(x_s)-f^\ast\bigr)
      - \frac{a}{2}\sqrt{\mu\mu_0}\norm{x_s-x^\ast}^2.
\]

Gathering \eqref{eq:pl_sqc_eq_12}, \eqref{eq:pl_sqc_eq_4}, \eqref{eq:pl_avg_pre_1_corr}, \eqref{eq:pl_avg_pre_2},
and \eqref{eq:pl_avg_pre_3_corr}, we obtain
\begin{align*}
&\dotprod{\nabla \varphi(\overline{x}_s),b(\overline{x}_s)}
+ \mathbb{E}_{\xi}\!\left[\varphi(\overline{x}_s+ G(\overline{x}_s,\xi)) - \varphi(\overline{x}_s)\right] \\
&\le \frac{1}{2}\bpar{ \frac{dB_s}{ds}-B_s \eta'}\norm{z_s-x^\ast}^2
+  \frac{1}{2}\bpar{  B_s \eta' - A_s \eta\,a \sqrt{\mu\mu_0}}\norm{x_s-x^\ast}^2
+ \bpar{\frac{dA_s}{ds}- A_s \eta\,a \sqrt{\frac{\mu}{L_0}}}\bigl(f(x_s)-f^\ast\bigr)\\
&\quad + \bpar{A_s\eta - B_s \gamma'} \dotprod{ \nabla f(x_s) ,z_s-x^\ast}
+ \frac{1}{2}\bpar{\rho B_s \gamma^{'2} - \frac{A_s}{\rho L}}\norm{\nabla f(x_s)}^2.
\end{align*}

\noindent
\textbf{Parameter tuning \:}
Set $A_s = e^{\theta s}$ and $B_s = \sqrt{\mu_0 L_0}\,e^{\theta s}$, with
\[
\theta=\eta'=\frac{a}{\rho}\Bigl(\frac{\mu_0}{L_0}\Bigr)^{1/4}\sqrt{\frac{\mu}{L}},
\quad
\gamma=\frac{1}{\rho L},
\quad
\gamma'=\frac{1}{\rho}\frac{1}{(\mu_0 L_0)^{1/4}\sqrt{L}},
\quad
\eta=\frac{1}{\rho}\frac{(\mu_0 L_0)^{1/4}}{\sqrt{L}}.
\]
These parameters imply the cancellations
\begin{align}
A_s\eta - B_s \gamma' &= 0,\label{eq:cancel_param:0}\\
\rho B_s \gamma^{'2} - \frac{A_s}{\rho L} &= 0,\label{eq:cancel_param:1}\\
\frac{dB_s}{ds}-B_s \eta' &= 0,\label{eq:cancel_param:2}\\
\frac{dA_s}{ds}- A_s \eta\,a \sqrt{\frac{\mu}{L_0}} &= 0,\label{eq:cancel_param:3}\\
B_s \eta' - A_s \eta\,a \sqrt{\mu\mu_0} &= 0.\label{eq:cancel_param:4}
\end{align}
Therefore,
\[
\dotprod{\nabla \varphi(\overline{x}_s),b(\overline{x}_s)}
+ \mathbb{E}_{\xi}\!\left[\varphi(\overline{x}_s+ G(\overline{x}_s,\xi)) - \varphi(\overline{x}_s)\right] \le 0,
\]
so from \eqref{eq:prop_calc_sto},
\[
\varphi(\overline{x}_t) \le \varphi(\overline{x}_0) + M_t.
\]

We evaluate at $t=T_k$, take expectation, and use Theorem~\ref{thm:martingal_stopping} and
Proposition~\ref{prop:discretization_constant_param} to get
\[
\E{e^{\theta T_k}\bigl(f(\tilde x_k)-f^\ast\bigr)} \le \varphi(\overline{x}_0).
\]
Finally, as $x_0 = z_0$ and $f\in \qginf$, we have
\[
\varphi(\overline{x}_0)
= f(\tilde x_0)-f^\ast + \frac{\sqrt{\mu_0 L_0}}{2}\norm{\tilde x_0-x^\ast}^2
\le \Bigl(1+\sqrt{\frac{L_0}{\mu_0}}\Bigr)\,(f(\tilde x_0)-f^\ast),
\]
where we used $\tilde x_0 = \tilde z_0$,
and we obtain \eqref{eq:a.s.:2}, namely
\begin{equation*}   \mathbb{E}\left[e^{\beta T_k}(f(\tilde{x}_k)-f^\ast)\right] \le K_0,
         \end{equation*}with $\beta=\theta$ and $K_0=\Bigl(1+\sqrt{\frac{L_0}{\mu_0}}\Bigr)(f(\tilde x_0)-f^\ast)$.
By Lemma~\ref{lem:conv_high_proba}, with probability $1-\frac{1}{c_0}-\exp(-(c_1-1-\log c_1)k)$,
\[
f(\tilde x_k)-f^\ast
\le c_0\Bigl(1+\sqrt{\frac{L_0}{\mu_0}}\Bigr)(f(\tilde x_0)-f^\ast)\,
\exp\!\Bigl(-\theta(1-c_1)k\Bigr),
\]
which is the claim.
\end{proof}

\section{Trajectory-adapted convergence}\label{app:adptative_condition}
In this section, we address a setting in which we assume the aiming condition is verified on average along the trajectory (Assumption~\ref{ass:avg_corelation}).
We start by proving Theorem~\ref{thm:gf_pl_&_avg_cor}, that addresses the dynamical system case. Then, we will state and prove similar results in the discrete case.
  \subsection{Proof of Theorem~\ref{thm:gf_pl_&_avg_cor}}\label{app:gf_pl_&_avg_cor}
\paragraph{Proof with Gradient Flow \eqref{eq:gf}}

Let
\[
E_t = e^{a_{avg}\sqrt{\mu \mu_0}\, t}\left[f(x_t)-f^\ast + \frac{\beta}{2}\norm{x_t-x^\ast}^2 \right].
\]
Then
\begin{align*}
\dot E_t
&= a_{avg}\sqrt{\mu\mu_0}\,e^{a_{avg}\sqrt{\mu\mu_0}t}
\left[f(x_t)-f^\ast + \frac{\beta}{2}\norm{x_t-x^\ast}^2 \right]\\
&\quad + e^{a_{avg}\sqrt{\mu\mu_0}t}\left[\dotprod{\nabla f(x_t),\dot x_t}
+ \beta \dotprod{x_t-x^\ast,\dot x_t}\right]\\
&\overset{(i)}{=} a_{avg}\sqrt{\mu\mu_0}\,e^{a_{avg}\sqrt{\mu\mu_0}t}
\left[f(x_t)-f^\ast + \frac{\beta}{2}\norm{x_t-x^\ast}^2 \right]\\
&\quad + e^{a_{avg}\sqrt{\mu\mu_0}t}\left[-\norm{\nabla f(x_t)}^2
- \beta \dotprod{x_t-x^\ast,\nabla f(x_t)}\right]\\
&\le a_{avg}\sqrt{\mu\mu_0}\,e^{a_{avg}\sqrt{\mu\mu_0}t}
\left[f(x_t)-f^\ast + \frac{\beta}{2}\norm{x_t-x^\ast}^2 \right]
- \beta e^{a_{avg}\sqrt{\mu\mu_0}t}\dotprod{x_t-x^\ast,\nabla f(x_t)},
\end{align*}
where (i) follows from $\dot x_t=-\nabla f(x_t)$.

Integrating from $0$ to $t$ yields
\begin{equation}\label{eq:gf_ac_average_1_corr}
E_t \le E_0
+ \int_0^t a_{avg}\sqrt{\mu\mu_0}\,e^{a_{avg}\sqrt{\mu\mu_0}s}
\left[f(x_s)-f^\ast + \frac{\beta}{2}\norm{x_s-x^\ast}^2 \right]ds
- \int_0^t \beta e^{a_{avg}\sqrt{\mu\mu_0}s}\dotprod{x_s-x^\ast,\nabla f(x_s)}\,ds.
\end{equation}

By Assumption~\ref{ass:avg_corelation} (with $\theta=a_{avg}\sqrt{\mu\mu_0}$), we have
\[
\int_0^t e^{\theta s}\dotprod{\nabla f(x_s),x_s-x^\ast}\,ds
\ge a_{avg}\int_0^t e^{\theta s}\norm{\nabla f(x_s)}\norm{x_s-x^\ast}\,ds,
\]
hence
\[
\int_0^t \beta e^{\theta s}\dotprod{x_s-x^\ast,\nabla f(x_s)}\,ds
\ge
\int_0^t \beta a_{avg} e^{\theta s}\norm{x_s-x^\ast}\norm{\nabla f(x_s)}\,ds.
\]
Plugging into \eqref{eq:gf_ac_average_1_corr} gives
\begin{equation}\label{eq:gf_ac_average_2_corr}
E_t \le E_0
+ \int_0^t a_{avg}\sqrt{\mu\mu_0}\,e^{\theta s}
\left[f(x_s)-f^\ast + \frac{\beta}{2}\norm{x_s-x^\ast}^2 \right]ds
- \int_0^t \beta a_{avg} e^{\theta s}\norm{x_s-x^\ast}\norm{\nabla f(x_s)}\,ds.
\end{equation}

Using Lemma~\ref{lem:prod}, for all $s\ge 0$,
\[
-\norm{x_s-x^\ast}\norm{\nabla f(x_s)}
\le -\sqrt{\frac{\mu}{L_0}}(f(x_s)-f^\ast) - \frac{\sqrt{\mu \mu_0}}{2}\norm{x_s-x^\ast}^2.
\]
Multiplying by $\beta a_{avg}e^{\theta s}$ and inserting into \eqref{eq:gf_ac_average_2_corr}, we obtain
\begin{align*}
E_t
&\le E_0 + \int_0^t e^{\theta s}\Bigl[
a_{avg}\sqrt{\mu\mu_0}\,(f(x_s)-f^\ast)
- \beta a_{avg}\sqrt{\frac{\mu}{L_0}}\,(f(x_s)-f^\ast)\\
&\hspace{2.2cm}
+ \frac{\beta}{2}a_{avg}\sqrt{\mu\mu_0}\,\norm{x_s-x^\ast}^2
- \frac{\beta}{2}a_{avg}\sqrt{\mu\mu_0}\,\norm{x_s-x^\ast}^2
\Bigr]ds\\
&= E_0 + a_{avg}\int_0^t e^{\theta s}
\Bigl(\sqrt{\mu\mu_0}-\beta\sqrt{\frac{\mu}{L_0}}\Bigr)\,(f(x_s)-f^\ast)\,ds.
\end{align*}
Choose $\beta=\sqrt{\mu_0L_0}$, so that
$\sqrt{\mu\mu_0}-\beta\sqrt{\mu/L_0}=0$. Then $E_t\le E_0$ for all $t\ge 0$.
Since $f(x_t)-f^\ast \le e^{-\theta t}E_t$, we get
\[
f(x_t)-f^\ast \le e^{-\theta t}E_0
= e^{-a_{avg}\sqrt{\mu\mu_0}\,t}\Bigl(f(x_0)-f^\ast + \frac{\beta}{2}\norm{x_0-x^\ast}^2\Bigr).
\]
Finally, using $f(x_0)-f^\ast \ge \frac{\mu_0}{2}\norm{x_0-x^\ast}^2$ and $\beta=\sqrt{\mu_0L_0}$,
\[
\frac{\beta}{2}\norm{x_0-x^\ast}^2
\le \sqrt{\frac{L_0}{\mu_0}}\,(f(x_0)-f^\ast),
\]
hence
\[
f(x_t)-f^\ast \le \Bigl(1+\sqrt{\frac{L_0}{\mu_0}}\Bigr)e^{-a_{avg}\sqrt{\mu\mu_0}\,t}\,(f(x_0)-f^\ast).
\]

\paragraph{Proof with Nesterov ODE momentum \eqref{eq:nmo}}

Let
\[
E_t = e^{\theta t}\Bigl(f(x_t)-f^\ast\Bigr)+\frac{\beta}{2}e^{\theta t}\norm{z_t-x^\ast}^2.
\]
As in the proof of Theorem~\ref{thm:pl_accel} (i), choose $\eta'=\theta$ and $\eta=\beta\gamma'$.
Then we get (see \eqref{eq:pl_corr_nest_1_alt_simpl})
\begin{equation}\label{eq:pl_corr_avg_nest_1_corr}
 \dot{E}_t \le e^{\theta t}\Bigl(
 \theta(f(x_t)-f^\ast)
 +\eta\dotprod{\nabla f(x_t), x^\ast - x_t}
 +\frac{\beta\theta}{2}\norm{x_t-x^\ast}^2
 \Bigr).
\end{equation}

Integrating,
\begin{align}
E_t
&\le E_0 + \int_0^t e^{\theta s}\Bigl(
 \theta(f(x_s)-f^\ast)
 +\frac{\beta\theta}{2}\norm{x_s-x^\ast}^2
 \Bigr)\,ds
 + \eta\int_0^t e^{\theta s}\dotprod{\nabla f(x_s),x^\ast-x_s}\,ds \nonumber\\
&= E_0 + \int_0^t e^{\theta s}\Bigl(
 \theta(f(x_s)-f^\ast)
 +\frac{\beta\theta}{2}\norm{x_s-x^\ast}^2
 \Bigr)\,ds
 - \eta\int_0^t e^{\theta s}\dotprod{\nabla f(x_s),x_s-x^\ast}\,ds.\label{eq:pl_corr_avg_nest_2_corr}
\end{align}

By Assumption~\ref{ass:avg_corelation},
\[
\int_0^t e^{\theta s}\dotprod{\nabla f(x_s),x_s-x^\ast}\,ds
\ge a_{avg}\int_0^t e^{\theta s}\norm{\nabla f(x_s)}\norm{x_s-x^\ast}\,ds,
\]
hence
\begin{equation}\label{eq:pl_corr_avg_nest_3_corr}
E_t
\le E_0 + \int_0^t e^{\theta s}\Bigl(
 \theta(f(x_s)-f^\ast)
 +\frac{\beta\theta}{2}\norm{x_s-x^\ast}^2
 \Bigr)\,ds
 - \eta a_{avg}\int_0^t e^{\theta s}\norm{\nabla f(x_s)}\norm{x_s-x^\ast}\,ds.
\end{equation}

Using Lemma~\ref{lem:prod},
\[
-\norm{x_s-x^\ast}\norm{\nabla f(x_s)}
\le -\sqrt{\frac{\mu}{L_0}}(f(x_s)-f^\ast)
 -\frac{\sqrt{\mu\mu_0}}{2}\norm{x_s-x^\ast}^2,
\]
and inserting into \eqref{eq:pl_corr_avg_nest_3_corr}, we obtain
\begin{equation}\label{eq:pl_corr_avg_nest_4_corr}
E_t
\le E_0 + \int_0^t e^{\theta s}\left[
\Bigl(\theta-\eta a_{avg}\sqrt{\frac{\mu}{L_0}}\Bigr)(f(x_s)-f^\ast)
+\frac{1}{2}\Bigl(\beta\theta-\eta a_{avg}\sqrt{\mu\mu_0}\Bigr)\norm{x_s-x^\ast}^2
\right]ds.
\end{equation}

Choose $\beta=\sqrt{\mu_0L_0}$ and impose the cancellations
\[
\theta=\eta a_{avg}\sqrt{\frac{\mu}{L_0}}
\qquad\text{and}\qquad
\beta\theta=\eta a_{avg}\sqrt{\mu\mu_0}.
\]
These two equalities are equivalent to $\beta=\sqrt{\mu_0L_0}$ and
\[
\theta=a_{avg}\Bigl(\frac{\mu_0}{L_0}\Bigr)^{1/4}\sqrt{\mu},
\qquad
\eta=(\mu_0L_0)^{1/4}.
\]
With $\eta'=\theta$ and $\eta=\beta\gamma'$, we can take
\[
\gamma'=\frac{\eta}{\beta}=(\mu_0L_0)^{-1/4},
\qquad
\eta'=\theta=a_{avg}\Bigl(\frac{\mu_0}{L_0}\Bigr)^{1/4}\sqrt{\mu},
\qquad
\gamma\ge 0.
\]
Then the integrand in \eqref{eq:pl_corr_avg_nest_4_corr} is identically zero and we get $E_t\le E_0$.
Since $f(x_t)-f^\ast \le e^{-\theta t}E_t$, we deduce
\[
f(x_t)-f^\ast \le e^{-\theta t}E_0
= e^{-\theta t}\Bigl(f(x_0)-f^\ast+\frac{\beta}{2}\norm{z_0-x^\ast}^2\Bigr).
\]
As $z_0=x_0$, then
\[
f(x_t)-f^\ast \le e^{-\theta t}\Bigl(f(x_0)-f^\ast+\frac{\sqrt{\mu_0L_0}}{2}\norm{x_0-x^\ast}^2\Bigr).
\]
Using $f(x_0)-f^\ast \ge \frac{\mu_0}{2}\norm{x_0-x^\ast}^2$, we obtain
\[
f(x_t)-f^\ast
\le \Bigl(1+\sqrt{\frac{L_0}{\mu_0}}\Bigr)e^{-\theta t}\,(f(x_0)-f^\ast),
\qquad
\theta=a_{avg}\Bigl(\frac{\mu_0}{L_0}\Bigr)^{1/4}\sqrt{\mu}.
\]

\subsection{Discrete case}\label{app:pl_accel_avg}
We address the discrete case. As in Appendix~\ref{app:cv_results}, we consider stochastic gradients under Assumption~\ref{ass:sgc}.  
\begin{assumption}\label{ass:avg}
  For some $\{ A_i \}_{i =1,\dots,k}$ and $a_{avg} > 0$, $\{\tilde y_k \}_{k\ge 0}$ is such that
    $$\sum_{i = 1}^k A_i  \left[\dotprod{\nabla
     f(\tilde y_i,\xi_i),\tilde y_i-x^\ast}-a_{avg} \norm{\nabla
     f(\tilde y_i,\xi_i)}\norm{x^\ast - \tilde y_i}\right] \ge 0.$$
\end{assumption}
We consider \eqref{eq:nm}, a similar and simpler analysis can be carried for \eqref{eq:gd}.

\begin{theorem}\label{thm:pl_accel_avg}
Let $f\in \pl \cap \ls$. Denote
$L_0 := \inf \{L'\le L:  f \in \qgf{L'} \}$,
$\mu_0 := \sup\{\mu' \ge \mu : f \in \qginff{\mu'}\}$.
Let $\xdisctilde$ verify \eqref{eq:nm} with the \hyperlink{cont_param}{continuized parameterization} and with
$\nabla f(\tilde x_k)$ replaced by $\nabla f(\tilde x_k,\xi_k)$ for random variables $\xi_k$ independent of $\tilde x_k$,
under Assumption~\ref{ass:sgc}, and assume that Assumption~\ref{ass:avg} holds with $A_i = e^{\theta T_i}$, where
\[
\theta = \eta' = \frac{a_{avg}}{\rho}\Bigl(\frac{\mu_0}{L_0}\Bigr)^{1/4}\sqrt{\frac{\mu}{L}}.
\]
Let
\[
\gamma = \frac{1}{\rho L},
\quad
\gamma' = \frac{1}{\rho(\mu_0 L_0)^{1/4}\sqrt{L}},
\quad
\eta = \frac{1}{\rho}\frac{(\mu_0 L_0)^{1/4}}{\sqrt{L}}.
\]
Then, with probability $1-\frac{1}{c_0} - e^{-(c_1-1-\log(c_1))k}$ (for some $c_0>1$, $c_1\in(0,1)$),
\[
f(\tilde x_k)-f^\ast \le c_0\Bigl(1+\sqrt{\frac{L_0}{\mu_0}}\Bigr)(f(\tilde x_0)-f^\ast)\,
\exp\!\Bigl(-\theta(1-c_1)k\Bigr).
\]
\end{theorem}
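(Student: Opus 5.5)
The plan is to merge two earlier proofs: the continuized Lyapunov argument used for Theorem~\ref{thm:sto_pl_accel}, and the integrated (rather than pointwise) treatment of the aiming term used for \eqref{eq:nmo} in Theorem~\ref{thm:gf_pl_&_avg_cor}. We work with the continuized process $\overline{x}_t=(t,x_t,z_t)$. Here the gradient is evaluated at the $x$-component just before each Poisson jump, and at the jump times $T_i$ that component equals $\tilde y_i$.

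We apply Proposition~\ref{prop:sto_calc} to
\[
\varphi(t,x,z)=A_t(f(x)-f^\ast)+\tfrac{B_t}{2}\norm{z-x^\ast}^2,\qquad A_t=e^{\theta t},\qquad B_t=\sqrt{\mu_0L_0}\,e^{\theta t}.
\]
We then redo the computation of the drift plus expected jump, but stop before invoking Lemma~\ref{lem:scalar_prod_bound}, which is no longer available since we do not assume $f\in\ac$. With the stated $\gamma,\gamma',\eta$ and $\eta'=\theta$, the cancellations \eqref{eq:cancel_param:0}, \eqref{eq:cancel_param:1} and \eqref{eq:cancel_param:2} still hold, since they do not involve $a$. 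They kill the $\norm{\nabla f}^2$ term, the cross term $\dotprod{\nabla f(x_s),z_s-x^\ast}$ and the $\norm{z_s-x^\ast}^2$ term. Using \eqref{eq:pl_avg_pre_2}, the remaining bound is
\[
\varphi(\overline{x}_t)\le\varphi(\overline{x}_0)+\int_0^t e^{\theta s}\Bigl[\theta(f(x_s)-f^\ast)+\tfrac{\sqrt{\mu_0L_0}\,\theta}{2}\norm{x_s-x^\ast}^2\Bigr]ds+\eta\int_0^t e^{\theta s}\dotprod{\nabla f(x_s),x^\ast-x_s}\,ds+M_t .
\]

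The main obstacle is the aiming term. Assumption~\ref{ass:avg} is a statement about a sum over the jump times, involving the stochastic gradients $\nabla f(\tilde y_i,\xi_i)$ and the weights $A_i=e^{\theta T_i}$. The integral above, by contrast, is the compensator in continuous time with the true gradient. To bridge them, I would not take the expectation over $\xi$ inside the jump part for this term. Instead I would keep the jump contribution pathwise, in the form
\[
\sum_{i\le N_t} B_{T_i}\gamma'\,\dotprod{\nabla f(\tilde y_i,\xi_i),x^\ast-z_{T_i^-}} .
\]
The remaining terms are split into a compensator plus a martingale as before. After the cancellation $A\eta=B\gamma'$, the aiming contribution becomes
\[
-\eta\sum_{i\le N_t}e^{\theta T_i}\dotprod{\nabla f(\tilde y_i,\xi_i),\tilde y_i-x^\ast}.
\]
Assumption~\ref{ass:avg} bounds this by
\[
-\eta\, a_{avg}\sum_{i}e^{\theta T_i}\norm{\nabla f(\tilde y_i,\xi_i)}\norm{\tilde y_i-x^\ast}.
\]
Next we pass back to a compensator and use $\mathbb{E}_\xi\norm{\nabla f(\cdot,\xi)}\ge\norm{\nabla f(\cdot)}$ (Jensen), which holds with the correct sign here because this term is negative. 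The term then becomes
\[
-\eta\, a_{avg}\int_0^t e^{\theta s}\norm{\nabla f(x_s)}\norm{x_s-x^\ast}\,ds
\]
up to a martingale. If this pathwise bookkeeping proves delicate, the fallback is to interpret Assumption~\ref{ass:avg} as holding in expectation at the stopping time $T_k$, which is all that is needed at the end.

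Finally we apply Lemma~\ref{lem:prod} to $\norm{\nabla f(x_s)}\norm{x_s-x^\ast}$. As in \eqref{eq:pl_corr_avg_nest_4_corr}, the integrand then has coefficients
\[
\theta-\eta a_{avg}\sqrt{\mu/L_0}\quad\text{on } f(x_s)-f^\ast,\qquad \tfrac12\bigl(\sqrt{\mu_0L_0}\,\theta-\eta a_{avg}\sqrt{\mu\mu_0}\bigr)\quad\text{on } \norm{x_s-x^\ast}^2 .
\]
Both vanish for the stated $\theta$ and $\eta$. This gives $\varphi(\overline{x}_t)\le\varphi(\overline{x}_0)+M_t$. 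We evaluate at $T_k$ and use Theorem~\ref{thm:martingal_stopping} together with Proposition~\ref{prop:discretization_constant_param} to obtain $\mathbb{E}[e^{\theta T_k}(f(\tilde x_k)-f^\ast)]\le\varphi(\overline{x}_0)$. Since $x_0=z_0$ and $f\in\qginff{\mu_0}$, we have $\varphi(\overline{x}_0)\le(1+\sqrt{L_0/\mu_0})(f(\tilde x_0)-f^\ast)$. Lemma~\ref{lem:conv_high_proba} then yields the stated high-probability bound.
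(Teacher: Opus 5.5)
Your proposal is correct and is essentially the paper's proof. It uses the same Lyapunov function with $A_t=e^{\theta t}$ and $B_t=\sqrt{\mu_0L_0}\,e^{\theta t}$ and the same three $a$-free cancellations. It then rewrites the aiming term as a Poisson sum over the jump times so that Assumption~\ref{ass:avg} applies, and concludes with Lemma~\ref{lem:prod}, Theorem~\ref{thm:martingal_stopping} and Lemma~\ref{lem:conv_high_proba}.

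Your bookkeeping is actually tighter than the paper's. The paper writes that jump sum with the exact gradients $\nabla f(\tilde y_k)$, even though Assumption~\ref{ass:avg} is stated for $\nabla f(\tilde y_i,\xi_i)$. Keeping the stochastic gradients pathwise, and returning to the compensator through $\mathbb{E}_\xi\norm{\nabla f(\cdot,\xi)}\ge\norm{\nabla f(\cdot)}$, is precisely what makes this step valid when $\rho>1$.
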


\begin{proof}
We use the same notations as in the proof of Theorem~\ref{thm:sto_pl_accel}.
Let $\overline{x}_t=(t,x_t,z_t)$ and define
\[
\varphi(t,x,z)
=
A_t(f(x)-f^\ast)+\frac{B_t}{2}\norm{z-x^\ast}^2,
\qquad
A_t=e^{\theta t},
\quad
B_t=\sqrt{\mu_0L_0}\,e^{\theta t}.
\]

Applying Proposition~\ref{prop:sto_calc}, we have
\begin{equation}\label{eq:avg_prop_calc}
\varphi(\overline{x}_t)
=
\varphi(\overline{x}_0)
+
\int_0^t
\Bigl[
\dotprod{\nabla \varphi(\overline{x}_s),b(\overline{x}_s)}
+
\mathbb{E}_{\xi}\!\bigl(
\varphi(\overline{x}_s+G(\overline{x}_s,\xi))-\varphi(\overline{x}_s)
\bigr)
\Bigr]ds
+M_t,
\end{equation}
where $M_t$ is a martingale with $\E{M_t}=0$.

From the same algebraic computations as in
Theorem~\ref{thm:pl_accel} (ii),
we obtain
\begin{align}\label{eq:avg_drift_1}
&\dotprod{\nabla \varphi(\overline{x}_s),b(\overline{x}_s)}
+
\mathbb{E}_{\xi}\!\bigl(
\varphi(\overline{x}_s+G(\overline{x}_s,\xi))-\varphi(\overline{x}_s)
\bigr)
\\
&\le
\frac{1}{2}\Bigl(\frac{dB_s}{ds}-B_s\eta'\Bigr)\norm{z_s-x^\ast}^2
+
\frac{B_s\eta'}{2}\norm{x_s-x^\ast}^2
+
\frac{dA_s}{ds}(f(x_s)-f^\ast)
\nonumber\\
&\quad
+
A_s\eta\,\dotprod{\nabla f(x_s),x^\ast-x_s}
+
(A_s\eta-B_s\gamma')\dotprod{\nabla f(x_s),z_s-x^\ast}
\nonumber\\
&\quad
+
\frac12\Bigl(\rho B_s\gamma'^2-\frac{A_s}{\rho L}\Bigr)\norm{\nabla f(x_s)}^2.
\nonumber
\end{align}

With the parameter choices of the theorem and $B_s=\sqrt{\mu_0L_0}A_s$, we have
\[
A_s\eta-B_s\gamma'=0,
\qquad
\rho B_s\gamma'^2-\frac{A_s}{\rho L}=0,
\qquad
\frac{dB_s}{ds}-B_s\eta'=0.
\]
Thus \eqref{eq:avg_drift_1} reduces to
\begin{equation}\label{eq:avg_drift_2}
\dotprod{\nabla \varphi(\overline{x}_s),b(\overline{x}_s)}
+
\mathbb{E}_{\xi}\!\bigl(
\varphi(\overline{x}_s+G(\overline{x}_s,\xi))-\varphi(\overline{x}_s)
\bigr)
\le
\frac{dA_s}{ds}(f(x_s)-f^\ast)
+
\frac{B_s\eta'}{2}\norm{x_s-x^\ast}^2
+
A_s\eta\,\dotprod{\nabla f(x_s),x^\ast-x_s}.
\end{equation}

We now control the last term using the jump structure.
For any predictable process $h_s$,
\[
\int_0^t h_s\,dN_s
=
\int_0^t h_s\,ds
+
\int_0^t h_s\,(dN_s-ds),
\]
where $\int_0^t h_s(dN_s-ds)$ is a martingale.

Applying this identity with
\[
h_s
=
A_s\eta\Bigl[
\dotprod{\nabla f(x_s,\xi_s),x_s-x^\ast}
-
a_{avg}\norm{\nabla f(x_s,\xi_s)}\norm{x_s-x^\ast}
\Bigr],
\]
and using Assumption~\ref{ass:avg} with $A_i=e^{\theta T_i}$, we obtain for $t=T_k$
\begin{equation}\label{eq:avg_jump_control}
\int_0^{T_k} A_s\eta\,\dotprod{\nabla f(x_s),x^\ast-x_s}\,dN_s
\le
-\eta a_{avg}\int_0^{T_k} A_s\norm{\nabla f(x_s)}\norm{x_s-x^\ast}\,dN_s.
\end{equation}
This holds because by definition of the Poisson integrals, we have
\[\int_0^{T_k} A_s\eta\,\dotprod{\nabla f(x_s),x^\ast-x_s}\,dN_s = \sum_{i=1}^k A_k\eta \dotprod{\nabla f(x_{T_k^-}),x^\ast-x_{T_k^-}} = \sum_{i=1}^k A_k\eta \dotprod{\nabla f(\tilde y_k),x^\ast-\tilde y_k},\]
where the last inequality follows from Proposition~\ref{prop:discretization_constant_param}.

Using Lemma~\ref{lem:prod},
\[
-\norm{\nabla f(x_s)}\norm{x_s-x^\ast}
\le
-\sqrt{\frac{\mu}{L_0}}(f(x_s)-f^\ast)
-
\frac{\sqrt{\mu\mu_0}}{2}\norm{x_s-x^\ast}^2,
\]
and inserting this inequality into \eqref{eq:avg_jump_control},
then back into \eqref{eq:avg_drift_2},
we obtain
\[
\dotprod{\nabla \varphi(\overline{x}_s),b(\overline{x}_s)}
+
\mathbb{E}_{\xi}\!\bigl(
\varphi(\overline{x}_s+G(\overline{x}_s,\xi))-\varphi(\overline{x}_s)
\bigr)
\le 0.
\]

Plugging this bound into \eqref{eq:avg_prop_calc} yields
\[
\varphi(\overline{x}_t)\le \varphi(\overline{x}_0)+M_t,
\]
where $M_t$ is a martingale.
Evaluating at $t=T_k$, taking expectations, and using
Theorem~\ref{thm:martingal_stopping}, we get
\[
\E{e^{\theta T_k}(f(\tilde x_k)-f^\ast)}\le \varphi(\overline{x}_0).
\]

Finally,
\[
\varphi(\overline{x}_0)
=
f(\tilde x_0)-f^\ast+\frac{\sqrt{\mu_0L_0}}{2}\norm{\tilde z_0-x^\ast}^2
\le
\Bigl(1+\sqrt{\frac{L_0}{\mu_0}}\Bigr)(f(\tilde x_0)-f^\ast),
\]
and Lemma~\ref{lem:conv_high_proba} concludes the proof.
\end{proof}

\subsection{Possible Extensions: Adapt to other Conditions and Exact Rates}\label{app:exact}
We could consider other extensions of the proposed viewpoint. First, we could consider other averaged condition, such as an \textit{averaged PL} condition:
\[ \int_0^t e^{\theta t}\norm{\nabla f(x_s)}^2ds \ge 2\mu_{avg}\int_0^t (f(x_s)-f^\ast )ds, \]
for some $\mu_{avg} > 0$.
This inequality is automatically verified if $f \in \pl$, with $\mu_{avg} \ge \mu$. Interestingly, $\xcont$ could cross a saddle point $\hat x$, such that $f \notin \pl$, while having that this inequality is verified. In particular, \eqref{eq:nmo} could cross and escape such saddle point, and then converging to a global minima. This saddle point could be "averaged out".

Alternatively, we also could consider point-wise parameter, as illustrated next.
\begin{proposition}\label{prop:exact_gf}
    Define the point-wise Polyak-Lojasiewicz constant $\mu(x) :=  2\frac{\norm{\nabla f(x)}^2}{f(x)-f^\ast}$. If $\xcont \sim \eqref{eq:gf}$, we have
    $$f(x_t)-f^\ast = e^{-2\int_{0}^t \mu(x_s)ds}(f(x_0)-f^\ast).$$
\end{proposition}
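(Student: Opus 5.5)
The plan is to write down the scalar ODE satisfied by the suboptimality gap along \eqref{eq:gf} and integrate it exactly. No inequality is used, so equality holds throughout. The convention for the point-wise constant has to be fixed first. For the stated identity $e^{-2\int_0^t\mu(x_s)ds}$ to hold, we need $2\mu(x)(f(x)-f^\ast)=\norm{\nabla f(x)}^2$, i.e. $\mu(x)=\frac12\frac{\norm{\nabla f(x)}^2}{f(x)-f^\ast}$. This is the convention of Lemma~\ref{lem:tec:exact_scalar_prod_bound} and of the definition of $\pl$. I will therefore read the factor in the statement as $\frac12$ rather than $2$; with the factor $2$ as written the exponent would be off by a factor $4$.

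\textbf{Step 1 (trajectory in the non-optimal region).} Set $E_t:=f(x_t)-f^\ast\ge 0$. As in the proof of Theorem~\ref{thm:gf_pl_&_ac}(i), the chain rule and $\dot x_t=-\nabla f(x_t)$ give
\[
\dot E_t=-\norm{\nabla f(x_t)}^2 .
\]
On any interval where $E_t>0$, this reads $\dot E_t=-2\mu(x_t)E_t$. Hence $\frac{d}{dt}\log E_t=-2\mu(x_t)$, and integrating from $0$ to $t$ yields $E_t=E_0\,e^{-2\int_0^t\mu(x_s)ds}$. Equivalently, one checks that $t\mapsto e^{2\int_0^t\mu(x_s)ds}E_t$ has zero derivative. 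The map $s\mapsto\mu(x_s)$ is continuous wherever $E_s>0$, since $f$ is $C^1$ and $x_s$ is continuous, so the integral is well defined.

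\textbf{Step 2 (possible hitting of the minimum value).} This is the only genuine subtlety. Since $\dot E_t\le 0$, the set $\{t: E_t>0\}$ is an interval $[0,t_0)$, with $t_0\in(0,+\infty]$ when $E_0>0$; if $E_0=0$ both sides vanish trivially. If $t_0=+\infty$, Step~1 concludes. If $t_0<\infty$, then $E_t=0$ for all $t\ge t_0$, because $E$ is nonincreasing and nonnegative. In that case the identity should be read with the convention $\mu(x_s)=+\infty$ (or $\int_0^t\mu=+\infty$) once $f(x_s)=f^\ast$, so that the right-hand side is $0$ as well.

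Consistency with the left limit also needs checking. The exponent satisfies $\int_0^t\mu(x_s)ds=-\frac12\log(E_t/E_0)\to+\infty$ as $t\uparrow t_0$, so both sides tend to $0$. If one prefers to avoid this convention, I would state the result for $t<t_0$, noting that $t_0=+\infty$ whenever $\nabla f$ is locally Lipschitz, by uniqueness of solutions to \eqref{eq:gf} together with the fact that minimizers are equilibria. I expect this bookkeeping at $t_0$, rather than the computation itself, to be the main point requiring care.
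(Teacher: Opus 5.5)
Your proposal is correct and follows the paper's argument: the paper sets $E_t=e^{2\int_0^t\mu(x_s)ds}(f(x_t)-f^\ast)$ and shows $\dot E_t=0$, and its key substitution $\norm{\nabla f(x_t)}^2=2\mu(x_t)(f(x_t)-f^\ast)$ holds only under the $\frac12$ convention you adopt, so your correction is justified (with the stated factor $2$, the exponent is off by a factor $4$). Your Step~2 handling of a possible finite hitting time of $f^\ast$, where $\mu(x_s)$ is undefined, is extra care that the paper's proof omits.
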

We obtain an exact rate for the decrease of $f(x_t)-f^\ast$. If $f\in \pl$, we have in particular $2\int_0^t \mu(x_s)ds \ge 2\mu t$. We can carry similar analysis with \eqref{eq:nmo}, under a point-wise aiming condition.

\begin{proposition}\label{prop:exact_nmo}
    Define the point-wise: Polyak-Lojasiewicz constant $\mu(x) :=  2\frac{\norm{\nabla f(x)}^2}{f(x)-f^\ast}$, aiming condition constant $a(x) := \frac{\dotprod{\nabla f(x),x-x^\ast}}{\norm{\nabla f(x)}\norm{x-x^\ast}}$ and quadratic growth constant $\mu_0(x) = 0.5\cdot\frac{f(x)-f^\ast}{\norm{x-x^\ast}^2}$.
   If $\xcont \sim \eqref{eq:nmo}$ with $ \eta'_t = a(x_t)\sqrt{\mu(x_t)},\quad \eta_t = \frac{L_0}{\sqrt{\mu_0(x_t)}}, \quad \gamma'_t = \frac{1}{\sqrt{\mu_0(x_t)}},\quad \gamma_t \ge 0$, we have
    $$f(x_t)-f^\ast \le e^{-\int_0^t a(x_s)\sqrt{\mu(x_s)}ds}L_0\norm{x_0-x^\ast}^2,$$
    with $L_0 = \sup_{x\in \R^d} \mu_0(x)$.
\end{proposition}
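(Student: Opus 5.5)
The plan is to adapt the Lyapunov argument of Theorem~\ref{thm:pl_accel}~(i) to time-varying parameters. Writing $\Theta_t := \int_0^t a(x_s)\sqrt{\mu(x_s)}\,ds$, I would use
\[
E_t = e^{\Theta_t}\Bigl(f(x_t)-f^\ast + \frac{\beta}{2}\norm{z_t-x^\ast}^2\Bigr), \qquad \beta := L_0 .
\]
The choice of a \emph{constant} $\beta = L_0$ is dictated by the parameters of the statement. The two cancellations used in the proof of Theorem~\ref{thm:pl_accel}~(i) were $\eta = \beta\gamma'$, which kills the $\dotprod{\nabla f(x_t), z_t-x^\ast}$ term, and $\eta' = \dot\Theta_t$, which kills the $\norm{z_t-x^\ast}^2$ term. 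With $\eta_t = L_0/\sqrt{\mu_0(x_t)}$ and $\gamma'_t = 1/\sqrt{\mu_0(x_t)}$, the first one holds exactly when $\beta = L_0$. With $\eta'_t = a(x_t)\sqrt{\mu(x_t)}$, the second one holds by construction of $\Theta_t$.

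\textbf{Step 1 (derivative).} Redo the computation \eqref{eq:nest_ode_0_rew}--\eqref{eq:pl_corr_nest_1_alt_simpl} with $\theta$ replaced by $\dot\Theta_t$. Nothing there used constancy except through $\theta$, and $\beta$ is constant. Dropping $-\gamma_t\norm{\nabla f}^2 \le 0$, this gives
\[
e^{-\Theta_t}\dot E_t \le \dot\Theta_t\,(f(x_t)-f^\ast) - \eta_t\, a(x_t)\norm{\nabla f(x_t)}\norm{x_t-x^\ast} + \frac{L_0\dot\Theta_t}{2}\norm{x_t-x^\ast}^2 .
\]
The middle term is an \emph{equality} rewriting of $\eta_t\dotprod{\nabla f(x_t),x^\ast-x_t}$ through the definition of $a(x)$.

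\textbf{Step 2 (pointwise identities).} Replace Lemma~\ref{lem:scalar_prod_bound} by exact pointwise identities in the spirit of Lemma~\ref{lem:tec:exact_scalar_prod_bound}, but with the normalizations of this statement. These are
\[
\norm{\nabla f} = \sqrt{\mu(x)(f-f^\ast)/2}, \qquad \norm{x-x^\ast}^2 = \frac{f-f^\ast}{2\mu_0(x)}, \qquad \norm{\nabla f}\norm{x-x^\ast} = \frac{\sqrt{\mu(x)}}{2\sqrt{\mu_0(x)}}\,(f-f^\ast).
\]
Substituting everything, the right-hand side becomes $a\sqrt{\mu}\,(f-f^\ast)$ times the factor
\[
1 - \frac{L_0}{2\mu_0(x_t)} + \frac{L_0}{4\mu_0(x_t)} = 1 - \frac{L_0}{4\mu_0(x_t)} .
\]
I then want this factor to be nonpositive, using $\mu_0(x_t) \le L_0 = \sup\mu_0$.

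\textbf{Main obstacle.} The sign of this factor is where I expect the difficulty. The bound $\mu_0(x_t)\le L_0$ only gives $L_0/(4\mu_0) \ge 1/4$, not $\ge 1$. So with the literal normalizations ($\mu = 2\norm{\nabla f}^2/(f-f^\ast)$, $\mu_0 = \tfrac12 (f-f^\ast)/\norm{x-x^\ast}^2$) the drift is only guaranteed nonpositive up to a constant factor in the rate.

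To resolve this, I would first check whether the weight $\beta$ can be taken as a multiple of $L_0$, rescaling $\eta_t$ accordingly. If that is not permitted by the stated parameters, I would rescale the exponent $\Theta_t$ by the corresponding numerical constant and record that discrepancy. I would also try using the discarded term $-\gamma_t\norm{\nabla f}^2$: since $\gamma_t \ge 0$ is free, a choice $\gamma_t \propto \eta'_t/\mu(x_t)$ supplies an extra $-(\text{const})\,a\sqrt{\mu}\,(f-f^\ast)$ and can close the gap. This is the option I would try first.

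\textbf{Step 3 (conclusion).} Once $\dot E_t \le 0$, integrate to get $f(x_t)-f^\ast \le e^{-\Theta_t}E_0$. With $z_0 = x_0$, bound $E_0 = f(x_0)-f^\ast + \tfrac{L_0}{2}\norm{x_0-x^\ast}^2$ using the identity $f(x_0)-f^\ast = 2\mu_0(x_0)\norm{x_0-x^\ast}^2 \le 2L_0\norm{x_0-x^\ast}^2$. This yields a bound of the form $C\,L_0\norm{x_0-x^\ast}^2$ with a numerical constant $C$ that has to be matched against the stated one. I expect the same normalization bookkeeping as in the main obstacle to be needed here to obtain exactly the stated prefactor $L_0$.
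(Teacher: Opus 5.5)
Your Lyapunov function, the choice $\beta=L_0$, and the two cancellations $\eta_t=\beta\gamma'_t$, $\eta'_t=\dot\Theta_t$ are exactly those of the paper's proof. Your Step~2 arithmetic with the definitions as printed is also correct: the drift factor is $1-\frac{L_0}{4\mu_0(x_t)}$. The gap is that none of your fallbacks can absorb this obstacle, because with the printed normalizations the proposition is false.

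Take $f(x)=\frac{1}{2}\norm{x}^2$, $x^\ast=0$. Then $a\equiv 1$, $\mu\equiv 4$, $\mu_0\equiv L_0=\frac{1}{4}$, and at $t=0$ the claim reads $\frac{1}{2}\norm{x_0}^2\le\frac{1}{4}\norm{x_0}^2$. The rate fails too. In $d=1$ with $\gamma_t\equiv 0$ the system is $\dot x=\frac{1}{2}(z-x)$, $\dot z=-2z$. Hence $x_t=\frac{4}{3}x_0e^{-t/2}-\frac{1}{3}x_0e^{-2t}$ and $f(x_t)\sim\frac{8}{9}x_0^2e^{-t}$, which decays like $e^{-t}$, not $e^{-2t}$. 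This is why each fallback proves something else:
\begin{itemize}
\item Tuning $\gamma_t$ covers only special choices, while the statement is for every $\gamma_t\ge 0$; it also cannot affect $t=0$.
\item Taking $\beta\neq L_0$ breaks $\eta_t=\beta\gamma'_t$ for the prescribed $\eta_t,\gamma'_t$.
\item Rescaling $\Theta_t$ changes the conclusion.
\end{itemize}

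What you are missing is that the intended normalizations are those of Lemma~\ref{lem:tec:exact_scalar_prod_bound}, which the paper's proof invokes: $\mu(x)=\frac{\norm{\nabla f(x)}^2}{2(f(x)-f^\ast)}$ and $\mu_0(x)=\frac{2(f(x)-f^\ast)}{\norm{x-x^\ast}^2}$. In other words, the factors $2$ and $\frac{1}{2}$ in the statement are swapped. With these definitions, $\norm{\nabla f}\norm{x-x^\ast}=2\sqrt{\mu/\mu_0}\,(f-f^\ast)$ and $\norm{x-x^\ast}^2=2(f-f^\ast)/\mu_0$. Your own Step~1 bound then becomes $a\sqrt{\mu}\,(f-f^\ast)\bigl(1-\frac{2L_0}{\mu_0}+\frac{L_0}{\mu_0}\bigr)=a\sqrt{\mu}\,\bigl(1-\frac{L_0}{\mu_0(x_t)}\bigr)(f-f^\ast)\le 0$. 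This holds for every $\gamma_t\ge 0$, with no need for the $-\gamma_t\norm{\nabla f}^2$ term. Your Step~3 gives $E_0=\frac{\mu_0(x_0)}{2}\norm{x_0-x^\ast}^2+\frac{L_0}{2}\norm{x_0-x^\ast}^2\le L_0\norm{x_0-x^\ast}^2$, which is exactly the stated prefactor.

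The paper's last line, $f(x_0)-f^\ast=2\mu_0(x_0)\norm{x_0-x^\ast}^2$, slips back to the printed normalization, and would only yield $\frac{5}{2}L_0$. One further point, shared with the paper: discarding $-\frac{L_0\eta'_t}{2}\norm{x_t-z_t}^2$ requires $\eta'_t\ge 0$, that is, $a(x_t)\ge 0$.
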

This rate is however purely conceptual, as it involves exact values of parameters we cannot compute. The averaged conditions such as Assumption~\ref{ass:avg_corelation} are more practical, as it is sufficient to use a small enough value of $a_{avg}$, and not an exact one.
\subsubsection{Proof of Proposition~\ref{prop:exact_gf}}
\textbf{(i) Case of gradient flow \eqref{eq:gf}. \:}
    Set
    $$E_t:= e^{2\int_{0}^t \mu(x_s)ds}(f(x_t)-f^\ast). $$
    We have
    \begin{equation}
        \begin{aligned}\label{eq:pl_avg:1}
                    \dot{E_t} &= 2\mu(x_t)e^{2\int_{0}^t \mu(x_s)ds}(f(x_t)-f^\ast) + e^{2\int_{0}^t \mu(x_s)ds}\dotprod{\nabla f(x_t),\dot x_t}\\
        &\overset{\eqref{eq:gf}}{=} 2\mu(x_t)e^{2\int_{0}^t \mu(x_s)ds}(f(x_t)-f^\ast) - e^{2\int_{0}^t \mu(x_s)ds}\norm{\nabla f(x_t)}^2
        \end{aligned}
    \end{equation}

    Recall $\mu(x) := 2\frac{\norm{\nabla f(x)}^2}{f(x)-f^\ast}$. Plugging it in \eqref{eq:pl_avg:1}, we deduce
    \begin{align}
        \dot{E_t} = 2\mu(x_t)e^{2\int_{0}^t \mu(x_s)ds}(f(x_t)-f^\ast) - 2\mu(x_t)e^{2\int_{0}^t \mu(x_s)ds}(f(x_t)-f^\ast)=0.
    \end{align}
    By integrating we deduce 
    \begin{equation}
        f(x_t)-f^\ast = e^{-2\int_{0}^t \mu(x_s)ds}(f(x_0)-f^\ast).
    \end{equation}

\subsubsection{Proof of Proposition~\ref{prop:exact_nmo}}

Let
\begin{equation}
    E_t
    =
    e^{\int_0^t a(x_s)\sqrt{\mu(x_s)}ds}
    \left(f(x_t)-f^\ast + \frac{\beta}{2}\norm{z_t-x^\ast}^2\right).
\end{equation}
Denote
\[
\Delta_t := e^{\int_0^t a(x_s)\sqrt{\mu(x_s)}ds},
\qquad
u_t := a(x_t)\sqrt{\mu(x_t)}.
\]

We have
\begin{align}
    \Delta_t^{-1}\dot E_t
    &= u_t(f(x_t)-f^\ast)
    + \frac{u_t\beta}{2}\norm{z_t-x^\ast}^2
    + \dotprod{\nabla f(x_t),\dot x_t}
    + \beta\dotprod{z_t-x^\ast,\dot z_t} \nonumber\\
    &= u_t(f(x_t)-f^\ast)
    + \frac{u_t\beta}{2}\norm{z_t-x^\ast}^2 \nonumber\\
    &\quad
    + \dotprod{\nabla f(x_t),\eta_t(z_t-x_t)-\gamma_t\nabla f(x_t)} \nonumber\\
    &\quad
    + \beta\dotprod{z_t-x^\ast,\eta'_t(x_t-z_t)-\gamma'_t\nabla f(x_t)} \nonumber\\
    &= u_t(f(x_t)-f^\ast)
    + \frac{u_t\beta}{2}\norm{z_t-x^\ast}^2 \nonumber\\
    &\quad
    + \eta_t\dotprod{\nabla f(x_t),x^\ast-x_t}
    + (\eta_t-\beta\gamma'_t)\dotprod{\nabla f(x_t),z_t-x^\ast} \nonumber\\
    &\quad
    + \beta\eta'_t\dotprod{z_t-x^\ast,x_t-z_t}
    - \gamma_t\norm{\nabla f(x_t)}^2 .
\end{align}

We choose $\eta'_t = u_t$ to cancel the $\norm{z_t-x^\ast}^2$ term, and
$\eta_t = \beta\gamma'_t$ to cancel the mixed scalar product. Then
\begin{equation}\label{eq:exact:nest_ode_1_corr}
    \Delta_t^{-1}\dot E_t
    =
    u_t(f(x_t)-f^\ast)
    + \beta\gamma'_t\dotprod{\nabla f(x_t),x^\ast-x_t}
    + \frac{\beta u_t}{2}\norm{x_t-x^\ast}^2
    - \gamma_t\norm{\nabla f(x_t)}^2 .
\end{equation}

By Lemma~\ref{lem:tec:exact_scalar_prod_bound},
\[
\dotprod{\nabla f(x_t),x^\ast-x_t}
=
- a(x_t)\sqrt{\frac{\mu(x_t)}{\mu_0(x_t)}}(f(x_t)-f^\ast)
- a(x_t)\sqrt{\mu(x_t)\mu_0(x_t)}\frac{1}{2}\norm{x_t-x^\ast}^2 .
\]

Fixing $\gamma'_t = \frac{1}{\sqrt{\mu_0(x_t)}}$ and ignoring the nonpositive term
$-\gamma_t\norm{\nabla f(x_t)}^2$, we obtain
\begin{equation}
    \Delta_t^{-1}\dot E_t
    =
    u_t\Bigl(1-\frac{\beta}{\mu_0(x_t)}\Bigr)(f(x_t)-f^\ast).
\end{equation}

Choosing $\beta = L_0 = \sup_{x\in\R^d}\mu_0(x)$ ensures $\dot E_t \le 0$ for all $t\ge0$.
Therefore $E_t \le E_0$, and
\[
f(x_t)-f^\ast
\le
e^{-\int_0^t a(x_s)\sqrt{\mu(x_s)}ds}\,E_0.
\]

Finally, since $\mu_0(x_0)\le L_0$, we have
\[
f(x_0)-f^\ast = 2\mu_0(x_0)\norm{x_0-x^\ast}^2 \le 2L_0\norm{x_0-x^\ast}^2,
\]
which yields
\[
f(x_t)-f^\ast
\le
e^{-\int_0^t a(x_s)\sqrt{\mu(x_s)}ds}\,L_0\norm{x_0-x^\ast}^2.
\]

\section{Continuized Toolbox}\label{app:continuized_intro}
The continuized framework \cite{even2021continuized} offers an alternative momentum method compared to the more classic ones. In practice, it results in a stochastic parameterization of \eqref{eq:nm}. It can lead to interesting results in some non-convex setting, see \cite{wang2023continuized,hermant2025continuized}. We refer to \cite[Section 3]{hermant2025continuized} for an methodological introduction of this concept, on the class of function $\pl \cap \ls$. In this section, we will simply state the fundamental objects and tools. The fundamental object are the continuized Nesterov equations:
\begin{align}\label{eq:nest_continuized}\tag{CNE}\left\{
    \begin{array}{ll}
        dx_t &= \eta(z_t-x_t)dt - \gamma \nabla f(x_t) dN_t \\
        dz_t &= \eta'(x_t-z_t)dt - \gamma' \nabla f(x_t) dN_t
    \end{array}
\right.
\end{align} where $dN(t) = \sum_{k\ge 0} \delta_{T_k}(dt)$ is a Poisson point measure with intensity $dt$, with $T_1,T_2,\dots$ random times such that for all $i$, $T_{i+1}-T_i \overset{i.i.d}{\sim} \mathcal{E}(1)$. It mixes a continuous component, the $dt$ factor, with a discrete component that acts at random times, the $dN_t$ factor. $\xz$ cannot be differentiated, but we can still use Lyapunov-approaches through Itô calculus. 
\begin{proposition}[\cite{even2021continuized}, Proposition 2]\label{prop:sto_calc}
Let $x_t \in \R^d$ be a solution of 
\begin{equation*}
    dx_t = b(x_t)dt + \int_\Xi G(x_t,\xi)dN(t,\xi)
\end{equation*}
and $\varphi : \R^d \to \R$ be a smooth function. Then,
\begin{equation}
    \varphi(x_t) = \varphi(x_0) + \int_{0}^t \dotprod{\nabla \varphi(x_s),b(x_s)}ds + \int_{[0,t]}  \mathbb{E}_\xi{ \varphi(x_s + G(x_s,\xi)) - \varphi(x_s)}ds + M_t,
\end{equation}

where $M_t$ is a martingale such that $\E{M_t}=0$, $\forall t\ge 0$.

\end{proposition}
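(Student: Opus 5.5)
The plan is a pathwise decomposition of $t\mapsto\varphi(x_t)$ into a continuous drift part and a pure jump part, followed by compensation of the jump part using the intensity of the Poisson measure. Let $T_1<T_2<\dots$ be the atoms of $N$, with marks $\xi_1,\xi_2,\dots$ drawn i.i.d. from $\mathcal{P}(\Xi)$ and independent of the times, and set $T_0=0$. The dynamics have two regimes.
\begin{itemize}
\item On each interval $[T_{k},T_{k+1})$ the process solves the ODE $\dot x_s=b(x_s)$. Since $\varphi$ is smooth, the chain rule gives
\[
\varphi(x_{T_{k+1}^-})-\varphi(x_{T_k})=\int_{T_k}^{T_{k+1}}\dotprod{\nabla\varphi(x_s),b(x_s)}ds .
\]
\item At each $T_k$ the process jumps, $x_{T_k}=x_{T_k^-}+G(x_{T_k^-},\xi_k)$.
\end{itemize}
Telescoping over the finitely many jumps in $[0,t]$ (a.s. finite) yields
\[
\varphi(x_t)=\varphi(x_0)+\int_0^t\dotprod{\nabla\varphi(x_s),b(x_s)}ds+\int_{[0,t]\times\Xi}\bigl[\varphi(x_{s^-}+G(x_{s^-},\xi))-\varphi(x_{s^-})\bigr]N(ds,d\xi).
\]

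Next I compensate the jump integral. The Poisson random measure $N(ds,d\xi)$ has compensator $ds\otimes\mathcal{P}(d\xi)$. The integrand $H(s,\xi):=\varphi(x_{s^-}+G(x_{s^-},\xi))-\varphi(x_{s^-})$ is predictable, since it depends on the left limit. Writing $N=\tilde N+ds\otimes\mathcal{P}$, the term $M_t:=\int H\,d\tilde N$ is a martingale by the standard theory of compensated Poisson integrals. The remaining term is $\int_0^t\mathbb{E}_\xi[H(s,\xi)]ds$. Because $x_{s^-}=x_s$ except on the countable set $\{T_k\}$, which has Lebesgue measure zero, I may replace $x_{s^-}$ by $x_s$ inside the $ds$ integral. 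This gives exactly the claimed formula. Then $\E{M_t}=0$ follows from $M_0=0$ and the martingale property.

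I expect the main obstacle to be the integrability needed to upgrade $M_t$ from a local martingale to a true martingale with zero mean, which requires $\E{\int_0^t\mathbb{E}_\xi|H(s,\xi)|ds}<\infty$. I would first localize with stopping times $\tau_n=\inf\{s:\norm{x_s}\ge n\}\wedge T_n$, on which $H$ is bounded because $\varphi$ and $G$ are continuous. This gives the identity with $\E{M_{t\wedge\tau_n}}=0$. I would then pass to the limit $n\to\infty$ using dominated convergence under a growth assumption on $\varphi$, $b$ and $G$, which is implicit in the cited statement. In the applications here, $\varphi$ is quadratic-type and $f\in\ls$, so this growth condition holds. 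If the growth control fails, the formula still holds with $M_t$ a local martingale, and that suffices once the result is combined with the optional stopping result used later (Theorem~\ref{thm:martingal_stopping}).
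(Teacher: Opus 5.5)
Your argument is correct. It is the standard derivation of this It\^o-type formula for finite-activity jump processes:
\begin{itemize}
\item apply the chain rule along the flow of $b$ between consecutive jump times;
\item telescope over the a.s.\ finitely many jumps in $[0,t]$;
\item compensate the resulting integral against $N$ by $ds\otimes\mathcal{P}(d\xi)$, replacing $x_{s^-}$ by $x_s$ in the $ds$-integral.
\end{itemize}
The paper gives no proof of its own and simply imports the statement from \cite{even2021continuized}, so there is no different route to compare with. You are also right that ``$\varphi$ smooth'' alone does not make $M_t$ a true martingale, so some integrability is implicitly assumed.

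One small slip concerns your localization. On $\{s\le\tau_n\}$ the integrand $H(s,\xi)$ is not bounded merely because $\varphi$ and $G$ are continuous, since $G(x,\cdot)$ may be unbounded in the mark $\xi$ (e.g.\ a stochastic gradient). What the localization actually needs is that $x\mapsto\mathbb{E}_\xi\bigl|\varphi(x+G(x,\xi))-\varphi(x)\bigr|$ be locally bounded. In the paper's use, this follows from $f\in\ls$, $f\ge f^\ast$ and the strong growth condition of Assumption~\ref{ass:sgc}.

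Your local-martingale fallback is fine in the applications, where $\varphi\ge 0$ lets you localize and conclude by Fatou's lemma. Note, however, that Theorem~\ref{thm:martingal_stopping} as stated requires a true martingale, so that extra step is needed.
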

Importantly, the following evaluations $\tilde{y}_k := x_{T_{k+1}^-}$, $\tilde{x}_k := x_{T_{k}}$ and $\tilde{z}_k := z_{T_{k}}$ of \eqref{eq:nest_continuized} can be written as an algorithmic form. 

\begin{proposition}[\cite{hermant2025continuized}, Proposition 5]\label{prop:discretization_constant_param}
Let $\xz$ follow \eqref{eq:nest_continuized}. Define $\tilde{y}_k := x_{T_{k+1}^-}$, $\tilde{x}_k := x_{T_{k}}$ and $\tilde{z}_k := z_{T_{k}}$ as evaluations of this process. Then, $(\tilde{y}_k,\tilde{x}_k, \tilde{z}_k)$ writes as a Nesterov algorithm in the form of \eqref{eq:nm} with stochastic parameters 
\begin{align}\label{alg:constant_param}\left\{
    \begin{array}{ll}
        \tilde{y}_k &= (\tilde{z}_{k} - \tilde{x}_k)\frac{\eta}{\eta+\eta'}\bpar{1 -e^{-(\eta + \eta')(T_{k+1}-T_k)} } + \tilde{x}_k \\
        \tilde{x}_{k+1} &=  \tilde{y}_k  - \gamma \nabla f(\tilde{y}_k,\xi_k)\\
        \tilde{z}_{k+1} &=   \tilde{z}_{k} + \eta' \frac{\bpar{1- e^{-(\eta+\eta')(T_{k+1}-T_k)}}}{\eta' + \eta e^{-(\eta+\eta')(T_{k+1}-T_k)}}(\tilde{y}_k - \tilde{z}_{k}) - \gamma' \nabla f(\tilde{y}_k,\xi_k)
    \end{array}
\right.
\end{align}
\end{proposition}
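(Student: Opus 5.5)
The proof solves \eqref{eq:nest_continuized} explicitly between consecutive jump times and then applies the jump at the next event time. On each interval $[T_k,T_{k+1})$ the Poisson measure puts no mass, so $(x_t,z_t)$ solves the linear ODE $\dot x_t=\eta(z_t-x_t)$, $\dot z_t=\eta'(x_t-z_t)$, started from $(\tilde x_k,\tilde z_k)=(x_{T_k},z_{T_k})$. At $t=T_{k+1}$, the integral against $dN_t$ produces a jump of $-\gamma\nabla f(\cdot,\xi_k)$ in $x$ and $-\gamma'\nabla f(\cdot,\xi_k)$ in $z$. Both gradients are evaluated at the left limit $x_{T_{k+1}^-}$, by the usual predictable convention for Poisson integrals. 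Hence, once we know $\tilde y_k:=x_{T_{k+1}^-}$ and $z_{T_{k+1}^-}$, we get directly
\[
\tilde x_{k+1}=\tilde y_k-\gamma\nabla f(\tilde y_k,\xi_k),\qquad \tilde z_{k+1}=z_{T_{k+1}^-}-\gamma'\nabla f(\tilde y_k,\xi_k).
\]
It therefore remains to compute the flow of the linear ODE.

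To integrate the ODE, I would use two quantities.
\begin{itemize}
\item The weighted mean $m_t:=\frac{\eta' x_t+\eta z_t}{\eta+\eta'}$ is conserved, since $\frac{d}{dt}(\eta'x_t+\eta z_t)=\eta'\eta(z_t-x_t)+\eta\eta'(x_t-z_t)=0$.
\item The difference obeys $\frac{d}{dt}(x_t-z_t)=-(\eta+\eta')(x_t-z_t)$, so it decays like $e^{-(\eta+\eta')(t-T_k)}$.
\end{itemize}
Writing $x_t-m_t=\frac{\eta}{\eta+\eta'}(x_t-z_t)$ and $z_t-m_t=-\frac{\eta'}{\eta+\eta'}(x_t-z_t)$, and setting $\delta_k:=1-e^{-(\eta+\eta')(T_{k+1}-T_k)}$, one gets
\[
\tilde y_k=\tilde x_k+\frac{\eta}{\eta+\eta'}\delta_k(\tilde z_k-\tilde x_k),\qquad z_{T_{k+1}^-}=\tilde z_k+\frac{\eta'}{\eta+\eta'}\delta_k(\tilde x_k-\tilde z_k).
\]
The first identity is exactly the first line of \eqref{alg:constant_param}.

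The only step requiring care is rewriting the $z$-update in terms of $\tilde y_k-\tilde z_k$ instead of $\tilde x_k-\tilde z_k$. From the formula for $\tilde y_k$,
\[
\tilde y_k-\tilde z_k=(\tilde x_k-\tilde z_k)\Bigl(1-\frac{\eta}{\eta+\eta'}\delta_k\Bigr)=(\tilde x_k-\tilde z_k)\,\frac{\eta'+\eta e^{-(\eta+\eta')(T_{k+1}-T_k)}}{\eta+\eta'}.
\]
The factor $\eta'+\eta e^{-(\eta+\eta')(T_{k+1}-T_k)}$ is positive, being a convex-type combination when $\eta,\eta'\ge0$; this needs checking under the standing sign assumptions. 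Dividing by it gives
\[
z_{T_{k+1}^-}=\tilde z_k+\eta'\frac{\delta_k}{\eta'+\eta e^{-(\eta+\eta')(T_{k+1}-T_k)}}(\tilde y_k-\tilde z_k).
\]
Combined with the jump, this yields the third line of \eqref{alg:constant_param}, and the proposition follows.
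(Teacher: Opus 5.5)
Your proposal is correct. It follows the standard route behind the cited result, which the paper invokes from \cite{hermant2025continuized} without reproving: integrate the linear flow between consecutive Poisson events, using your conserved weighted mean $\eta' x_t+\eta z_t$ and the exponentially decaying difference $x_t-z_t$, then apply the jump with the gradient taken at the left limit $x_{T_{k+1}^-}=\tilde y_k$.

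The sign check you left open is harmless. If $\eta'\ge 0$, then $\eta'+\eta e^{-(\eta+\eta')(T_{k+1}-T_k)}>0$; when $\eta<0$ it is at least $\eta+\eta'>0$. This covers every parameter choice in the paper, where $\eta,\eta'>0$. For $\eta'<0$ the factor vanishes only for a single value of $T_{k+1}-T_k$, which is a null event, and on that event the stated $\beta_k$ is itself undefined.
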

In order to transfer a result involving $\xz$ to $(\tilde x_k,\tilde z_k)_{k\in\N}$, we use need a stopping theorem, such as the following.
\begin{theorem}[Stopping theorem, \cite{hermant2025continuized}, Theorem 6]\label{thm:martingal_stopping}
    Let $(\varphi_t)_{t\in \R_+}$ be a nonnegative process with cadlag trajectories, such that it verifies
    $$ \varphi_t \le K_0 + M_t, $$
    for some positive random variable $K_0$, some martingale $\mart$ with $M_0 = 0$. Then, for an almost surely finite stopping time $\tau$, one has 
$$ \E{\varphi_\tau} \le \E{K_0}.$$
\end{theorem}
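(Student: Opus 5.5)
The plan is to reduce to bounded stopping times, where optional stopping applies directly, and then pass to the limit using the nonnegativity of $\varphi$ through Fatou's lemma.

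Without loss of generality assume $\E{K_0}<+\infty$; otherwise there is nothing to prove. We work on the filtration generated by the process, augmented to satisfy the usual conditions. The martingale $\mart$ is taken càdlàg; this is automatic for the compensated Poisson integrals arising from Proposition~\ref{prop:sto_calc}, and in general a càdlàg version exists under the usual conditions.

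\textbf{Step 1 (bounded stopping times).} For $n\in\N$, the time $\tau\wedge n$ is a stopping time bounded by $n$. Doob's optional stopping theorem for right-continuous martingales gives $\E{M_{\tau\wedge n}}=\E{M_0}=0$. Evaluating the pathwise inequality $\varphi_t\le K_0+M_t$ at the random time $t=\tau\wedge n$ yields
\[
\varphi_{\tau\wedge n}\le K_0+M_{\tau\wedge n}.
\]
Here $M_{\tau\wedge n}$ is integrable, being a martingale stopped at a bounded time. Taking expectations gives $\E{\varphi_{\tau\wedge n}}\le \E{K_0}$. Note that we never need $K_0$ to be $\mathcal F_0$-measurable, since only linearity of expectation is used.

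\textbf{Step 2 (passage to the limit).} Since $\tau<+\infty$ almost surely, for almost every $\omega$ we have $\tau\wedge n=\tau$ as soon as $n\ge \tau(\omega)$. Hence $\varphi_{\tau\wedge n}\to\varphi_\tau$ almost surely, and in fact the sequence is eventually constant. Because $\varphi\ge 0$, Fatou's lemma applies and gives
\[
\E{\varphi_\tau}\le \liminf_{n\to\infty}\E{\varphi_{\tau\wedge n}}\le \E{K_0},
\]
which is the claim.

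The only delicate point is Step 1: the validity of optional stopping at the bounded times $\tau\wedge n$. This requires right-continuity of $M$ and measurability of $\varphi_\tau$, the latter following from the càdlàg assumption and progressive measurability. I would cite Doob's theorem for càdlàg martingales rather than reprove it.

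Nonnegativity of $\varphi$ is essential for Step 2. We cannot simply let $n\to\infty$ inside $\E{M_{\tau\wedge n}}=0$, since $M_\tau$ need not be integrable and uniform integrability is not assumed. Fatou's lemma on the nonnegative side avoids this issue entirely.
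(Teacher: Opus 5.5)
Your proof is correct: optional stopping at the bounded times $\tau\wedge n$ gives $\E{\varphi_{\tau\wedge n}}\le\E{K_0}$, and Fatou's lemma with $\varphi\ge 0$ then passes to the a.s.\ finite time $\tau$. This is the standard argument behind the result; the paper itself gives no proof and simply imports the statement from \cite{hermant2025continuized}.

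One thing to tidy: keep the given filtration, with respect to which $M$ is a martingale and $\tau$ is a stopping time, rather than switching to ``the filtration generated by the process''. After such a switch, $\tau$ need not remain a stopping time, and optional stopping for a right-continuous martingale at bounded times needs no augmentation anyway.
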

Finally, because the continuized analysis leads to result in expectation, in the case where we obtain linear convergence rates, concentration inequalities such as Chernov inequality allows to deduce a result on the trajectory, with high probability.
\begin{lemma}[\cite{hermant2025continuized}, Proposition 14]\label{lem:conv_high_proba}
     Assume $T_1,\dots,T_k$ are random variables such that $T_{i+1} - T_i$ are  i.i.d  of law $\mathcal{E}(1)$, with convention $T_0 = 0$. If the iterations of (\ref{alg:constant_param}) verify
                     \begin{equation}\label{eq:a.s.:2}       \mathbb{E}\left[e^{\beta T_k}(f(\tilde{x}_k)-f^\ast)\right] \le K_0,
         \end{equation}
     then, with probability  $1-\frac{1}{c_0} - e^{-(c_1-1-\log(c_1))k}$, for some $c_0 > 1$ and $c_1 \in (0,1)$ we have
         \begin{equation*}
    f(\tilde{x}_k)-f^\ast \le c_0K_0e^{-\beta(1-c_1)k}.
    \end{equation*}
\end{lemma}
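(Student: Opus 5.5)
The plan is a Markov bound for the expectation combined with a Chernoff bound for the Poisson times $T_k$, joined by a union bound. Set $X_k := e^{\beta T_k}(f(\tilde x_k)-f^\ast)\ge 0$. By Markov's inequality and \eqref{eq:a.s.:2}, $\mathbb{P}(X_k \ge c_0 K_0) \le \frac{1}{c_0}$ for any $c_0>1$. So, outside an event of probability at most $1/c_0$,
\[
f(\tilde x_k)-f^\ast \le c_0K_0 e^{-\beta T_k}.
\]

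It remains to show that $T_k$ is not much smaller than $k$. Since $T_k=\sum_{i=0}^{k-1}(T_{i+1}-T_i)$ is a sum of $k$ i.i.d.\ $\mathcal{E}(1)$ variables, it follows a Gamma$(k,1)$ law. We want a lower-tail bound of the form $\mathbb{P}(T_k \le c\, k)\le e^{-k I(c)}$ for $c\in(0,1)$, obtained by the Chernoff method. For $\lambda>0$ we have $\mathbb{P}(T_k\le ck)\le e^{\lambda ck}\,\mathbb{E}[e^{-\lambda T_k}] = e^{\lambda c k}(1+\lambda)^{-k}$. Optimizing gives $1+\lambda = 1/c$, and the bound becomes $\exp(-k(c-1-\log c))$. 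The exponent $c-1-\log c$ is positive for $c\in(0,1)$, so this is a genuine exponential decay in $k$.

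To match the statement, the constants must be aligned. The target rate $e^{-\beta(1-c_1)k}$ requires $T_k\ge (1-c_1)k$, so the natural choice is $c=1-c_1$. That choice produces the exponent $(1-c_1)-1-\log(1-c_1)$, whereas the statement's exponent is written in $c_1$ itself. This bookkeeping between the stated exponent $c_1-1-\log c_1$ and the threshold $1-c_1$ is the main point of care. I will try applying the Chernoff bound with $c=c_1$, which yields $\exp(-(c_1-1-\log c_1)k)$ with the threshold $T_k\ge c_1 k$. I will then record the rate with whichever parameterization is consistent, i.e.\ replacing $1-c_1$ by $c_1$, or equivalently renaming $c_1\leftarrow 1-c_1$. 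The probabilistic content is the same either way, since both forms are the same statement up to relabeling $c_1\in(0,1)$.

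Finally, I take a union bound over the complement events, the Markov failure (probability $1/c_0$) and the Chernoff failure. On the intersection of the good events,
\[
f(\tilde x_k)-f^\ast\le c_0K_0e^{-\beta T_k}\le c_0K_0 e^{-\beta(1-c_1)k},
\]
where the last inequality uses $\beta>0$. This holds with probability at least $1-\frac1{c_0}-e^{-(c_1-1-\log c_1)k}$, as claimed.
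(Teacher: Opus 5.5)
Your Markov step and your Chernoff computation $\mathbb{P}(T_k\le ck)\le e^{-(c-1-\log c)k}$ are correct. This is clearly the intended argument: the paper gives no proof, only a citation to \cite{hermant2025continuized}, and the stated failure probability is exactly $1/c_0$ from Markov plus a Gamma lower-tail term. The gap is in your last paragraph. The Chernoff event you control with probability $1-e^{-(c_1-1-\log c_1)k}$ is $\{T_k>c_1k\}$, and on it you only get $e^{-\beta T_k}\le e^{-\beta c_1k}$. The inequality $e^{-\beta T_k}\le e^{-\beta(1-c_1)k}$ that you then write needs $c_1\ge 1-c_1$, i.e.\ $c_1\ge 1/2$. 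Your claim that the two forms are ``the same statement up to relabeling $c_1$'' is false. The substitution $c_1\leftarrow 1-c_1$ acts on the threshold and on the exponent at once. It maps the consistent pair (rate $\beta c_1$, exponent $c_1-1-\log c_1$) to the consistent pair (rate $\beta(1-c_1)$, exponent $-c_1-\log(1-c_1)$), and never to the mixed pair printed in the statement.

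This is not just bookkeeping. If $c_0>1$ and $c_1\in(0,1)$ are read as arbitrary (as intended), the printed pairing does not follow from the moment bound for small $c_1$. Let $Y$, playing the role of $f(\tilde x_k)-f^\ast$, equal $2c_0K_0e^{-\beta(1-c_1)k}$ on $\{T_k\le(1-c_1)k-\delta\}$ and $0$ elsewhere. Then $\mathbb{E}[e^{\beta T_k}Y]\le 2c_0K_0e^{-\beta\delta}\le K_0$ once $\beta\delta\ge\log(2c_0)$. Yet $Y>c_0K_0e^{-\beta(1-c_1)k}$ with probability $\mathbb{P}(T_k\le(1-c_1)k-\delta)$. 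For $k=1$, $c_1=0.1$, $c_0=10$, $\beta=100$, $\delta=0.05$, this probability equals $1-e^{-0.85}\approx0.57$, which exceeds $1/c_0+e^{-(c_1-1-\log c_1)}\approx0.35$.

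What your argument actually proves is the following: for all $c_0>1$ and $c\in(0,1)$, with probability at least $1-1/c_0-e^{-(c-1-\log c)k}$, one has $f(\tilde x_k)-f^\ast\le c_0K_0e^{-\beta ck}$. Taking $c=c_1$ recovers the printed statement when $c_1\in[1/2,1)$, but not in general. You should state and prove this corrected form instead of invoking a relabeling. The same mismatched pairing reappears in the probability of Theorem~\ref{thm:sto_pl_accel}.
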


\section{Proof of Proposition~\ref{lem:toy_example_ac}}
\textbf{Computation of $\nabla h(x)$.} We have
\begin{equation}
    \begin{aligned}
        \nabla h(x) &= \frac{x}{\norm{x}}f'(\norm{x})g\left(\frac{x}{\lVert x \rVert} \right) + \bpar{\frac{1}{\norm{x}}I_d - \frac{x x^T}{\norm{x}^3}}f(\lVert x \rVert)\nabla g\left(\frac{x}{\lVert x \rVert} \right)\\
        &=\frac{x}{\norm{x}}f'(\norm{x})g\left(\frac{x}{\lVert x \rVert} \right) + \frac{1}{\norm{x}}\bpar{I_d -\frac{x x^T}{\norm{x}^2}}f(\lVert x \rVert)\nabla g\left(\frac{x}{\lVert x \rVert} \right)
    \end{aligned}
\end{equation}
Noting $r := \norm{x}$ and $u :=x/\norm{x}$, this can be rewritten
\begin{equation}
    \begin{aligned}
        \nabla h(x) = \nabla  h(ru) = f'(r)g(u)u + \frac{1}{r}f(r) \nabla_{\sphere} g(u),
    \end{aligned}
\end{equation}
where $\nabla_{\sphere} g(u) = (I_d-uu^T)\nabla g(u)$ is the Riemannian gradient of $g$ on the sphere. $\nabla_{\sphere} g(u)$ belongs to the tangent space at $u$, namely $\nabla_{\sphere} g(u) \in  \mathcal{T}_u = \{ v, \dotprod{u,v} = 0 \}$. In particular, it implies that $u$ and $\nabla_{\sphere} g(u)$ are orthogonal, so
\[\norm{\nabla h(x)}^2 =f'(r)^ 2g(u)^2 + \frac{1}{r^2}f(r)^2 \norm{\nabla_{\sphere} g(u)}^2, \]
and 
\[\dotprod{u,\nabla_{\sphere} g(u)} = 0.\]
So, we have
\[\dotprod{\nabla h(x),x-x^\ast} = \dotprod{f'(r)g(u)u + \frac{1}{r}f(r) \nabla_{\sphere} g(u),ru} = rf'(r)g(u),\]
 and 
 \[\norm{\nabla h(x)}\norm{x-x^\ast} = \sqrt{f'(r)^ 2g(u)^2 + \frac{1}{r^2}f(r)^2 \norm{\nabla_{\sphere} g(u)}^2}r, \]
 such that
 \[\frac{\dotprod{\nabla h(x),x-x^\ast} }{\norm{\nabla h(x)}\norm{x-x^\ast}} = \frac{f'(r)g(u)}{\sqrt{f'(r)^ 2g(u)^2 + \frac{1}{r^2}f(r)^2 \norm{\nabla_{\sphere} g(u)}^2}}.\]
 So, $h \in \ac$ for $a$ such that
 \[a := \inf_{r>0,u \in \mathbb{S}^{d-1}} \frac{f'(r)g(u)}{\sqrt{f'(r)^ 2g(u)^2 + \frac{1}{r^2}f(r)^2 \norm{\nabla_{\sphere} g(u)}^2}} = \inf_{r>0,u \in \mathbb{S}^{d-1}} \frac{1}{\sqrt{1+ \bpar{\frac{f(r)}{rf'(r)}}^2 \bpar{\frac{\norm{\nabla_{\sphere} g(u)}}{g(u)}}^2}}  \]
 \subsection{Proof of Proposition~\ref{prop:converse_link}}\label{app:prop:converse}
 For the converse relation, we define a weaker relaxation of strong convexity than $\sqc$, defined by the Restricted Secant Inequality (RSI) \cite{zhang2013gradient}.
\begin{definition}
   For $\nu>0$, we call $\rsi$ the set of functions $f$ that satisfies the Restricted Secant Inequality, namely $\forall x \in \R^d,~ \dotprod{\nabla f(x),x-x^\ast} \ge \nu \norm{x-x^\ast}^2.$
\end{definition}
A function in $\sqc$ is also in $\rsif{\frac{2\mu}{2-\gamma}}$, such that the RSI condition is weaker \cite{hermant2024study}. 
\begin{proposition}
    Let $f\in\rsi$. If $f$ also satisfies
    \begin{equation}\label{eq:eb+}
         \forall x \in \R^d,~ \norm{\nabla f(x)}\le L\norm{x-x^\ast},
    \end{equation}

    then $f \in \acf{\frac{\nu}{L}}$.
\end{proposition}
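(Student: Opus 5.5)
The proof is a direct chaining of the two hypotheses: the restricted secant inequality gives a lower bound on $\dotprod{\nabla f(x),x-x^\ast}$ in terms of $\norm{x-x^\ast}^2$, and \eqref{eq:eb+} converts one factor $\norm{x-x^\ast}$ into $\norm{\nabla f(x)}/L$.

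\textbf{Trivial case.} If $x=x^\ast$, both sides of the aiming inequality vanish, so it holds.

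\textbf{Main step.} For $x\neq x^\ast$, I would write $\norm{x-x^\ast}^2=\norm{x-x^\ast}\cdot\norm{x-x^\ast}$. Using $\norm{x-x^\ast}\ge \norm{\nabla f(x)}/L$ from \eqref{eq:eb+}, the definition of $\rsi$ gives
\[
\dotprod{\nabla f(x),x-x^\ast}\ge \nu\norm{x-x^\ast}^2\ge \frac{\nu}{L}\norm{\nabla f(x)}\norm{x-x^\ast}.
\]
This is exactly the aiming condition with constant $a=\nu/L$.

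\textbf{Admissibility of the constant.} It remains to check $a\in(0,1]$ as Definition~\ref{def:ac} requires. Positivity is immediate. For the upper bound, Cauchy--Schwarz followed by \eqref{eq:eb+} gives
\[
\nu\norm{x-x^\ast}^2\le \dotprod{\nabla f(x),x-x^\ast}\le \norm{\nabla f(x)}\norm{x-x^\ast}\le L\norm{x-x^\ast}^2 .
\]
Dividing by $\norm{x-x^\ast}^2$ at any $x\neq x^\ast$ yields $\nu\le L$.

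\textbf{Consequence for Proposition~\ref{prop:converse_link}.} There is no real obstacle beyond the case distinction. To recover the original proposition, I would invoke the inclusion $\sqc\subset\rsif{\frac{2\mu}{2-\tau}}$ from \cite{hermant2024study}, quoted just above. Plugging $\nu=\frac{2\mu}{2-\tau}$ into the above gives $f\in\acf{\frac{2\mu}{(2-\tau)L}}$.
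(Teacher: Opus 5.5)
Your proof is correct and is essentially the paper's one-line argument, $\dotprod{\nabla f(x),x-x^\ast}\ge\nu\norm{x-x^\ast}^2\ge\frac{\nu}{L}\norm{x-x^\ast}\norm{\nabla f(x)}$. Your treatment of $x=x^\ast$ and the check $\nu\le L$ are harmless additions the paper leaves implicit.

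One caveat on your final paragraph, which goes beyond the statement. The inclusion $\sqc\subset\rsif{\frac{2\mu}{2-\tau}}$ that you import as quoted in the paper is off by a factor $\tau/2$; the constant should be $\frac{\tau\mu}{2-\tau}$. For example, $f=\frac{\mu}{2}\norm{x}^2$ lies in $\sqcf{1}{\mu}$ but only in $\rsif{\mu}$, not $\rsif{2\mu}$. Your own check $\nu\le L$ would have caught this: this $f$ satisfies \eqref{eq:eb+} with $L=\mu$, so the quoted constant would give $2\mu\le\mu$, i.e.\ an aiming constant of $2>1$.
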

\begin{proof}
    We have \[\dotprod{\nabla f(x),x-x^\ast} \ge \nu \norm{x-x^\ast}^2 \ge \frac{\nu}{L}\norm{x-x^\ast}\norm{\nabla f(x)}.\]
\end{proof}
The condition \eqref{eq:eb+} is sometimes named $\text{EB}^+$, as the upper version of the Error-Bound condition \cite{guille2021study}. It can also be interpreted as being Lipschitz with respect to the minimizers. The result follows from the previous proposition.

\end{document}